\newcommand{\xx}{{\sf X}}
\newcommand{\yy}{{\sf Y}}
\newcommand{\zz}{{\sf Z}}
\newcommand{\uu}{{\sf U}}
\newcommand{\rr}{\widetilde{\sf R}}
\newcommand{\idel}[1]{{\ensuremath {\mathrm{I}\Delta_{#1}}}\xspace}
\newcommand{\xxp}{\overline{\sf X}}
\newcommand{\yyp}{\overline{\sf Y}}
\newcommand{\zzp}{\overline{\sf Z}}
\newcommand{\uup}{\overline{\sf U}}
\newcommand{\la}{\langle}
\newcommand{\ra}{\rangle}
\newcommand{\principle}[1]{\ensuremath{\formal{#1}}}
\newcommand{\formal}[1]{\ensuremath{{\sf {#1}}}\xspace}
\newcommand{\sonetwo}{{\ensuremath{{\sf S^1_2}}}\xspace}
\newcommand{\il}{{\ensuremath{\textup{\textbf{IL}}}}\xspace}
\newcommand{\extil}[1]{\ensuremath{\textup{\textbf{IL}}{\sf\ensuremath{#1}}}\xspace}
\newcommand{\gl}{{\ensuremath{\textup{\textbf{GL}}}}\xspace}
\newcommand{\intl}[1]{{\ensuremath {\textup{\textbf{IL}}}({\rm #1})}}
\newcommand{\ilm}{\extil{M}}
\newcommand{\ilp}{\extil{P}}
\newcommand{\ilw}{\extil{W}}
\newcommand{\ilwstar}{\extil{W^*}}
\newcommand{\ilal}{\intl{All}\xspace}
\newcommand{\cut}[1]{{\sf Cut}(#1)}
\newcommand{\forallcut}{\forall^{\sf Cut}}
\newcommand{\existscut}{\exists^{\sf Cut}}
\theoremstyle{plain}
\newtheorem{theorem}{Theorem}[section]
\newtheorem{definition}[theorem]{Definition}
\newtheorem{lemma}[theorem]{Lemma}
\newtheorem{corollary}[theorem]{Corollary}
\theoremstyle{remark}
\theoremstyle{question}
\newtheorem{conjecture}[theorem]{Conjecture}
\title{Two series of formalized interpretability principles for weak systems of arithmetic}
\author{Evan Goris, Joost J. Joosten}
\begin{document}
\maketitle

\begin{abstract}
The provability logic of a theory $T$ captures the structural behavior of formalized provability in $T$ as provable in $T$ itself. Like provability, one can formalize the notion of relative interpretability giving rise to interpretability logics. Where provability logics are the same for all moderately sound theories of some minimal strength, interpretability logics do show variations. 

The logic \ilal is defined as the collection of modal principles that are provable in any moderately sound theory of some minimal strength. In this paper we raise the previously known lower bound of \ilal by exhibiting two series of principles which are shown to be provable in any such theory. Moreover, we compute the collection of frame conditions for both series. 
\end{abstract}

\section{Introduction}

Relative interpretations in the sense of Tarski, Mostowski and Robinson \cite{TarskiEtAl:1953:UndecidableTheories} are widely used in mathematics and in mathematical logic to interpret one theory into another. Roughly speaking, such an interpretation between two theories is a \emph{translation} from the language of one theory to the language of the other so that the translation preserves logical structure and theoremhood. 

We shall write $U\rhd V$ to denote that a theory $U$ interprets a theory $V$. Once we know that $U\rhd V$, this provides us much information; for example the consistency of $U$ implies the consistency of $V$ and also, various definability results carry over from the one theory to the other. Famous examples of interpretations are abundant: the theory of the natural numbers into the theory of the integers, set theory plus the continuum hypothesis into ordinary set theory, non-Euclidean geometry into Euclidean geometry, etc.

Interpretability, being a syntactical notion, allows for formalization very much as one can formalize the notion of provability. As such, we can consider \emph{interpretability logics} which actually extend the well-know provability logic \gl of G\"odel L\"ob. We shall see that the interpretability logic of a theory is the collection of all structural properties of interpretability that it can prove. 

Where all modestly correct theories of some minimal strength --let us call them \emph{reasonable theories} in this paper-- have the same provability logic \gl, the situation is different in the case of interpretability and different theories have different logics. It is an open question to determine the logic of interpretability principles being provable in any reasonable theory. This paper reports on substantial progress on this open question by increasing the previously known lower bound.

\section{Preliminaries}
Let $U$ and $V$ denote theories with languages $\mathcal L_U$ and $\mathcal L_V$ respectively. A relative interpretation $j$ from $V$ into a theory $U$ --we will write $j:U\rhd V$-- is a pair $\la \delta (x), t\ra$ where $\delta(x)$ is a formula of $\mathcal L_U$ that specifies the domain in which $V$ will be interpreted and $t$ is a translation, mapping symbols of $\mathcal L_V$ to formulas of $\mathcal L_U$ providing a definition in $U$ of these symbols. 

The translation $t$ is extended to a translation $j$ of formulas in the usual way by having $j$ commute with the connectives and relativize the quantifiers to the domain specifier $\delta(x)$ as follows: $\big( \forall x\ \varphi (x) \big)^j := \forall x\ \big( \delta(x) \to \varphi^j (x) \big)$. We will not go too much into details but the main point is that interpretations are primarily syntactical notions --especially for finite languages-- and as such allow for an arithmetization/formalization very much as formal proofs do. 

\subsection{Arithmetic}

In order to formalize the notion of interpretability within some base theory $T$ one needs to require some minimal strength conditions on $T$. In particular, we shall require that $T$ can speak of numbers where to code syntax and without loss of generality we shall assume that $T$ contains the language of arithmetic $\{ +, \times, S, 0, 1 <, = \}$. 

We will need that the main properties of the basic syntactical operations like substitution are provable within $T$. For reasonable coding protocols this implies that we need to require the totality of a function of growth-rate $\omega_1(x) := x \mapsto 2^{2|x|}$ where $|x|$ denotes the integral part of the binary logarithm of $x$. 

Further, to perform basic arguments we need a minimal amount of induction and actually a surprisingly little amount of induction suffices. Buss's theory \sonetwo has just the needed amount of induction and proves the totality of $\omega_1$ and this shall be our base theory (formulated in the standard language of arithmetic).

Alternatively, we could have taken as base theory $\idel{0} + \Omega_1$ which consists of Robinson's arithmetic $\sf Q$ together  with induction for bounded formulas with parameters and the axiom $\Omega_1$ stating that the graph of $\omega_1$ defines a total function. We refer the reader to \cite{HajekPudlak:1993:Metamathematics} and \cite{Buss:1986:BoundedArithmetic} for further details.

A sharply bounded quantifier is one of the form $\forall \, x{<}|y|$ where $|y|$ denotes the integer value of the binary logarithm of $y$. The class $\Delta^b_0$ contains exactly the formulas where each quantifier is sharply bounded. The class $\Sigma^1_b$ arises by allowing bounded existential quantifiers and sharply bounded universal quantifiers to occur over $\Delta^b_0$ formulas. By $\exists \Sigma^1_b$ we denote those formulas that arise by allowing a single unbounded existential quantifier over a $\Sigma^1_b$ formula. The complexity classes $\Pi_n$, $\Sigma_n$ and $\Delta_n$ refer to the usual quantifier alternations hierarchies in the standard language of arithmetic.

In this paper we shall only be concerned with first order-theories in the language of arithmetic with a poly-time recognizable set of axioms extending \sonetwo and shall often refrain from repeating (some of) these conditions. We shall write $\Box_T \phi$ as the $\exists \Sigma_1^b$ formalization of $\phi$ being provable in the theory $T$ and refrain from distinguishing formulas from their G\"odel numbers or even the numerals thereof. It is well known that we can express provable $\Sigma_1$ completeness using formalized provability.

\begin{lemma}
For any theory $T$ extending \sonetwo we have that
\[
T\vdash \forall \alpha \ \Box_T \alpha \to \Box_T \Box_T \alpha.
\]
\end{lemma}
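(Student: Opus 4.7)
The plan is to derive this as an instance of formalized $\Sigma_1$-completeness. Recall from the preliminaries that $\Box_T \alpha$ is (represented by) an $\exists \Sigma_1^b$ formula, and therefore in particular $\Sigma_1$. The general lemma we want to apply is that for every $\Sigma_1$ formula $\sigma(x_1,\dots,x_n)$ in the language of arithmetic,
\[
\sonetwo \vdash \forall \vec x\,\bigl(\sigma(\vec x) \to \Box_T \sigma(\dot{\vec x})\bigr),
\]
where $\dot x$ denotes the numeral-term construction internal to \sonetwo. Granting this, the statement of the lemma is obtained by substituting the $\Sigma_1$ formula $\Box_T \alpha$ (with the single free variable $\alpha$ ranging over codes of formulas) for $\sigma$.

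The bulk of the work is thus to sketch formalized $\Sigma_1$-completeness inside \sonetwo. I would proceed by an external induction on the build-up of $\Sigma_1$ formulas, splitting into the usual cases: sharply bounded atoms and their Boolean combinations, sharply bounded quantifiers, bounded (existential and universal) quantifiers, and finally the leading unbounded existential quantifier. For each case one exhibits, uniformly in $\vec x$ (and in the relevant witnesses or bounds), a \sonetwo\!-definable construction of a $T$-proof of the instance; this is exactly where the presence of $\omega_1$ and smash is needed, since the length of the produced proof grows only polynomially in $|x|$ for the bounded parts, and the numeral $\dot x$ itself has size polynomial in $|x|$ thanks to efficient dyadic numerals. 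The unbounded existential step is immediate: from a witness $b$ for $\exists y\,\varphi(y,\vec x)$ and the induction hypothesis giving a $T$-proof of $\varphi(\dot b, \dot{\vec x})$, one appends a single existential-introduction step.

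The step from general $\Sigma_1$-completeness to the statement of the lemma is then routine: inside $T$, assuming $\Box_T \alpha$, we have a $\Sigma_1$ fact about the variable $\alpha$, and by the formalized completeness just established (with $\sigma(\alpha) := \Box_T \alpha$), $T$ proves $\Box_T \alpha \to \Box_T \ulcorner \Box_T \dot\alpha \urcorner$, which is precisely $\Box_T \Box_T \alpha$ under the usual conventions identifying $\alpha$ with its G\"odel number.

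The main obstacle is the first step, the proof of formalized $\Sigma_1$-completeness in \sonetwo, and specifically the bookkeeping showing that proof sizes remain polynomial so that the induction can actually be carried out with the $\mathrm{PIND}$ available in \sonetwo. Since this result is standard and documented in detail in \cite{HajekPudlak:1993:Metamathematics} and \cite{Buss:1986:BoundedArithmetic}, I would invoke it rather than reprove it, and restrict the exposition to pointing out that $\Box_T\alpha$ meets the complexity hypothesis of that general theorem.
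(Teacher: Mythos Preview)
Your proposal is correct and matches the paper's treatment: the paper does not supply a proof at all but simply records the lemma as the well-known consequence of provable $\Sigma_1$-completeness, which is exactly the reduction you carry out (and you cite the same standard references, H\'ajek--Pudl\'ak and Buss, for the underlying result in \sonetwo). Your additional sketch of how formalized $\Sigma_1$-completeness is established is more than the paper offers, but it is accurate and in line with the intended argument.
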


We will use $U\rhd V$ to denote the formalization of ``the theory $V$ is interpretable in the theory $U$". If we abbreviate the existential quantifier over numbers that code a pair $\la \delta (x), t\ra$ defining an interpretation by $\exists^{int}j$ we can write
\begin{equation}\label{equation:definitionOfInterpretabilityFormalized}
U\rhd V := \exists^{int} j\, \forall \psi \ (\Box_V \psi \to \Box_U \psi^j).
\end{equation}

An interpretation $j:U\rhd V$ can be used as a uniform way to obtain a model of $V$ inside any model of $U$. If $U$ satisfies full induction, then we see that actually the defined model of $V$ is an end extension of the model of $U$: we define $f(0):=0$ and $f(x+1) := f(x)+^j 1^j$ and by induction see that $\forall x\exists y \ f(x)=y$. As such, we see that any $\Sigma_1$ consequence of $U$ must necessarily also hold in $V$. Since $\Box_T \varphi$ is a $\Sigma_1$ formula, the insight on end extensions is reflected in what is called \emph{Montagna's principle} 
\begin{equation}\label{equation:MontagnaForTheories}
\ \ \ (U\rhd V) \to \big( (U \cup \{\Box_T \varphi\})\rhd (V \cup \{ \Box_T \varphi\} )\big).
\end{equation}
In case $U$ does not have full induction, we can still define the graph $F(x,y)$ of the function $f$ from above, but we can no longer prove that the function is total. However, we can prove that $\exists y \ F(x,y)$ is \emph{progressive}, that is, we can prove
\[
\exists y \, F(0,y) \ \wedge \ \forall x\ \big( \exists y \, F(x,y) \to \exists y, \ F(x+1,y)\big).
\]
In particular, the formula $\exists y \, F(x,y)$ defines an initial segment within $U$. A common trick in weak arithmetics is to use this initial segment as our natural numbers instead of applying induction (which is not necessarily available). By Solovay's techniques on shortening initial segments we may assume that they obey certain closure properties giving rise to the what is called a \emph{definable cut}.

A formula $J$ is called a $T$-cut whenever $T$ proves all of
\begin{enumerate}
\item
$J(0) \wedge \forall x \, (J(x) \to J(x+1))$;

\item
$\forall x \, \big(J(x) \wedge J(y) \to J(x+y)\wedge J(xy) \wedge J(\omega_1(x))\big)$;

\item
$J(x) \wedge y<x \to J(y)$.
\end{enumerate}
Let $\cut J$ denote the conjunction of these three requirements. Sometimes we want to quantify over cuts within $T$ so that these cuts can then of course be non-standard. We shall use $\forallcut J\ \psi$ and $\existscut J\ \psi$ to denote $\forall J\ (\, \Box_T \cut {\dot J} \to \psi)$ and $\exists J\ (\, \Box_T \cut {\dot J} \wedge \psi)$ respectively. Here the dot notation in $\Box_T \cut {\dot J}$ is the standard way to abbreviate the formula with one free variable $J$ stating that the formula $\cut J$ is provable in $T$. We will freely use the dot notation throughout the remainder of this paper. Sometimes we shall write $x{\in} J$ instead of $J(x)$.

For $J$ a cut, let $\psi^J$ denote the formula where all unrestricted quantifiers are now required to range over values in $J$. That is, $(\forall x \ \phi)^J: = \forall x \ (J(x) \to \phi^J)$, $(\exists x \ \phi)^J: = \exists x \ (J(x) \wedge \phi^J)$. Moreover, that is the only thing that is done by this translation so that for example $(\phi \wedge \psi)^J := \phi^J \wedge \psi^J$ etc. Instead of writing $(\Box_T\phi)^J$ we shall simply write $\Box_T^J \phi$. We note that if $\psi(J)\in \exists \Sigma_1^b$, then $\existscut J\ \psi(J)$ is again provably equivalent to an $\exists \Sigma^b_1$ formula.

Let us get back to the role of induction in Montagna's principle. If $j: U\rhd V$ and $U$ does not prove full induction, then $j$ will not define an end extension of any model of $U$. However, it is easy to see that $j$ does define, using the progressive formula $\exists y\, F(x,y)$, a definable cut in $U$ on which $f$ is an isomorphism. This is reflected in a weakening of Montagna's principle also referred to as \emph{Pudl\'ak's principle}.
\begin{lemma}\label{theorem:PudlaksPrinciple} 
Let $T$ be a theory containing $\sonetwo$ and let $U$ and $V$ be theories.
\begin{equation}\label{equation:PudlakForTheories}
T\vdash U\rhd V \to \existscut J \, \forall \, \psi {\in} \Delta_0  \, \Big( U\cup \{(\exists x \,  \psi)^{\dot J}\} \rhd V\cup \{ \exists x\, \psi \} \Big).
\end{equation}
\end{lemma}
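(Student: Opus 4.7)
The plan is to reason inside $T$, assume $U\rhd V$ via some interpretation $j = \la \delta, t\ra$ satisfying $\forall \psi\, (\Box_V\psi \to \Box_U\psi^j)$, and show that the very same $j$ witnesses the conclusion for a carefully chosen cut $J$.

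First I would construct $J$. Using $j$, define in $U$ the graph $F(x,y)$ of the ``standard'' embedding $f(0):=0^j$, $f(x{+}1):=f(x)+^j 1^j$ sketched just above the lemma. The formula $I(x):=\exists y\, F(x,y)$ is then $U$-provably progressive, and by Solovay's cut-shortening construction, applied inside $T$, one obtains a formula $J$ with $\Box_T \cut{\dot J}$ and $U\vdash J(x)\to I(x)$. I would then further shorten $J$ finitely many times so that, provably in $U$, the map $F{\restriction}J$ is an isomorphism onto an initial segment of the $j$-defined model preserving $0,S,+,\times,<$ and $\omega_1$.

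Next I would fix an arbitrary $\psi\in\Delta_0$ and verify that $j$ witnesses the desired interpretability. For any $\phi$, the deduction theorem gives
\[
\Box_{V\cup\{\exists x\,\psi\}}\phi \;\leftrightarrow\; \Box_V(\exists x\,\psi\to\phi),
\]
and the hypothesis $U\rhd V$ then yields $\Box_U\bigl((\exists x\,\psi)^j\to\phi^j\bigr)$. Hence it suffices to establish, provably in $U$, the implication $(\exists x\,\psi)^{\dot J}\to (\exists x\,\psi)^j$: given $x\in J$ with $\psi(x)$, pick $y$ with $F(x,y)$; then $\delta(y)$ holds and, by the preservation property of $F$, so does $\psi^j(y)$, whence $(\exists x\,\psi)^j$. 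Combining these two steps we obtain $\Box_U\bigl((\exists x\,\psi)^{\dot J}\to\phi^j\bigr)$, i.e.\ $U\cup\{(\exists x\,\psi)^{\dot J}\}\rhd V\cup\{\exists x\,\psi\}$ via $j$.

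The crucial ingredient is the uniform $\Delta_0$-preservation claim that $U$ proves, for all $\Delta_0$ formulas $\varphi(\vec x)$ and all $\vec x\in J$, $\vec y$ with $\bigwedge_i F(x_i,y_i)$, the equivalence $\varphi(\vec x)\leftrightarrow \varphi^j(\vec y)$. This is proved by a formalized induction on formula complexity inside \sonetwo, which in turn relies on having shortened $J$ enough so that bounded quantifiers on the $J$-side correspond to bounded quantifiers ranging over the $\delta$-domain under $F$. The main technical obstacle is precisely this interaction between iterated Solovay-shortening and the syntactic induction on $\Delta_0$-formulas; once it is available, the rest of the argument is a routine bit of formalized proof manipulation.
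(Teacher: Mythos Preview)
The paper does not give a formal proof of this lemma; it is stated as Pudl\'ak's principle with only the preceding paragraph serving as a sketch (the progressive formula $\exists y\,F(x,y)$ determines a definable cut on which the embedding $f$ is an isomorphism). Your proposal expands exactly this sketch---construct the graph $F$, shorten to a genuine cut $J$ via Solovay, and use $\Delta_0$-absoluteness along $F$ together with the deduction theorem---and is the standard correct argument.
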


\subsection{The interpretability logic of a theory}

Interpretability logics are designed to capture structural behavior of formalized interpretability just as provability logic captures the structural behavior of formalized provability. To this end we consider a propositional modal language with a unary modal operator $\Box$ to model formalized provability and a binary modal operator $\rhd$ to model formalized interpretability of sentential extensions of some base theory. Let us make this more precise.

Let us fix an arithmetical theory $T$; By $*$ we will denote a \emph{realization}, that is, any mapping from the set of propositional variables to sentences of $T$. The map $*$ is extended to the set of all modal formulas of interpretability logics as follows
\[
\begin{array}{llll}
(\neg A)^* & := & \neg A^* & \\
(A \wedge B)^* & := & A^* \wedge B^* & \mbox{and likewise for the other connectives}\\
(\Box A)^* &:=& \Box_T A^* & \\
(A\rhd B)^* &:=& (T+ A^*)\rhd (T+B^*). & \\
\end{array}
\]
We can now define the interpretability logic of a theory as those modal principles which are provable under any realization. With some liberal notation this is captured in the following.
\begin{definition}
Let $T$ be a theory containing \sonetwo. We define the \emph{interpretability logic of $T$} as 
\[
\intl{T} \ :=\ \{ A \mid \forall * \ T\vdash A^* \}.
\]
Further, we define the interpretability logic of all arithmetical theories extending \sonetwo by 
\[
\intl{All} \ :=\ \{ A \mid \forall T\, \forall * \ T\vdash A^* \}.
\]

\end{definition}

As a direct corollary to \eqref{equation:MontagnaForTheories} --Montagna's principle-- we can conclude that 
\[
(A\rhd B) \to \big( (A \wedge \Box C) \rhd (B\wedge \Box C) \big) \in \intl{T}
\]
whenever $T$ proves full induction. However, there is no direct reflection of Pudl\'ak's principle on the level of interpretability logics since Pudl\'ak's principle would translate to 
\[
(A \rhd B ) \to \big( (A \wedge \Box^J C) \rhd (B\wedge \Box C) \big) \in \intl{T}
\]
for the particular cut $J$ corresponding to $j:A\rhd B$ and this cannot be expressed in our modal language. In a sense, $\Box^J C$ corresponds to finding a small witness of the provability of $C$. As we shall see, there are various occasions where we can conclude that such small witnesses exist. The two main ingredients in obtaining such small witnesses are expressed by the following lemmas.

\begin{lemma}[Outside big, inside small]
For $T, U$ any theories extending \sonetwo, we have that 
\[
T\vdash \forallcut J\,  \forall x\  \Box_U (\dot x \in \dot J ).
\]
\end{lemma}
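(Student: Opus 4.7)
The plan is to reason inside $T$: we fix a formula $J$ with $\Box_T \cut{\dot J}$ and show $\forall x\, \Box_U(\dot x \in \dot J)$ by formalized induction on $x$. Writing $P(x) := \Box_U(\dot x \in \dot J)$, this is an $\exists \Sigma_1^b$ predicate of $x$, so proving it progressive --- namely $P(0)$ and $\forall x\, (P(x) \to P(x+1))$ --- suffices to conclude $\forall x\, P(x)$ by the induction scheme available in $T$ for formulas of this complexity.

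For the base case $P(0)$, we extract $\Box_T(J(\bar{0}))$ as a conjunct of $\Box_T \cut{\dot J}$ and transfer to $\Box_U(J(\bar{0}))$ using provable $\Sigma_1$-completeness (the preceding lemma, applied to the shared $\sonetwo$-base). For the step case, we similarly obtain $\Box_U$ of the successor closure $\forall y\,(J(y) \to J(y+1))$ from the second conjunct of $\cut{J}$; combined with the hypothesis $\Box_U(\bar{x} \in J)$, a single formalized modus ponens inside $U$ at $y := \bar{x}$ delivers $\Box_U(\overline{x+1} \in J)$. Both of these constructions can be described by polynomial-time functions of $x$, so the existence of the resulting $U$-proofs is witnessable at the $\exists \Sigma_1^b$-level needed for the induction.

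The main obstacle is the transfer from $\Box_T$ to $\Box_U$ of the cut axioms: since $U$ need not have $\cut{J}$ among its theorems, one must explain how the $U$-proofs of the specific instances $J(\bar{0})$ and $J(\bar{x}) \to J(\overline{x+1})$ are built. The key is that $\Box_T \cut{\dot J}$ is itself a $\Sigma_1$ fact, which combined with $U$'s $\Sigma_1$-completeness gives enough grip to assemble the required short $U$-derivations. This is precisely the ``inside small'' phenomenon named by the lemma: although $x$ may be externally (in $T$) arbitrarily large, the $U$-proof of $\bar{x} \in J$ built by chaining $x$ successor instances can be coded with size polynomial in $|x|$, which is what makes the formalization go through at the complexity level allowed by the base theory.
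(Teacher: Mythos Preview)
Your proposal has a genuine gap. You set up the argument as successor induction on $x$: establish $P(0)$ and $\forall x\,(P(x)\to P(x+1))$ for $P(x):=\Box_U(\dot x\in\dot J)$, and conclude $\forall x\,P(x)$. But $T$ is only assumed to extend $\sonetwo$, and $\sonetwo$ does \emph{not} have full successor induction for $\exists\Sigma_1^b$ (equivalently $\Sigma_1$) formulas --- that would already give $\isig{1}$. It only has polynomial\slash length induction for $\Sigma_1^b$. So the step from progressiveness to $\forall x\,P(x)$ is simply not licensed by the base theory.

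This cannot be patched by the observation that ``each step is poly-time''. The $U$-proof you describe, built by chaining $x$ many successor instances $J(\bar 0),\,J(\bar 1),\,\ldots,\,J(\bar x)$, has size of order $x$, not polynomial in $|x|$; your final sentence asserts the opposite, but a chain of $x$ applications of modus ponens cannot be coded by an object of size polynomial in $|x|$. Hence the function $x\mapsto(\text{$U$-proof of }J(\bar x))$ you have in mind is not poly-time in $|x|$, not $\Sigma_1^b$-definable in $\sonetwo$, and no induction principle available there will carry it.

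The paper's proof addresses exactly this point: the naive successor-chain construction ``requires the totality of exponentiation'', and the fix is to switch to \emph{dyadic} (efficient) numerals. With dyadic numerals one exploits the closure of a cut under $+$ and $\times$ (not just successor) to build a proof of $J(\bar x)$ that follows the binary expansion of $x$; this proof has length $O(|x|)$ and is constructible in polynomial time, so its existence is verifiable in $\sonetwo$. That device --- efficient numerals in place of successor iteration --- is the missing idea in your argument.
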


\begin{proof}
Given $J$ and given $x$, not necessarily in $J$, we can construct a proof-object to the extent that $x\in J$ in the obvious way. First conclude $J(0)$ which holds since $J$ is a cut. Next, conclude that $J(1)$ from the progressiveness of $J$ and $J(0)$ and so all the way to $J(x)$. This proof object is not much bigger than $x$ itself. However it requires the totality of exponentiation. If this is not provable in $T$, the proof can be generalized by switching do dyadic numerals and we refer to e.g.~\cite{Buss:1986:BoundedArithmetic, JoostenVisser:2000:IntLogicAll} for details.
\end{proof}

\begin{lemma}[Formalized Henkin construction]
For theories $T, U$ and $V$ all extending \sonetwo we have
\[
T\vdash \forallcut J\   (U \cup \{ {\sf Con}^J (V) \} \rhd V).
\]
\end{lemma}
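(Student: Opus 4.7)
The plan is to carry out a formalized Henkin construction. Working inside $T$, fix a cut $J$ with $\Box_T\cut{\dot J}$; we must exhibit an interpretation $j=\la\delta(x),t\ra$ and verify, provably in $T$, that
\[
\forall\psi\ \Big(\Box_V\psi\to\Box_{U\cup\{{\sf Con}^J(V)\}}\psi^j\Big).
\]

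First I would expand $\mathcal{L}_V$ with countably many fresh Henkin constants $c_0,c_1,\ldots$ and enumerate all sentences of the expanded language as $\varphi_0,\varphi_1,\ldots$. Using the recursion available in $U$, define a sequence of finite extensions $T_n$ of $V$: at stage $n$, adjoin whichever of $\varphi_n,\neg\varphi_n$ is compatible with the relevant formal consistency statement, and whenever $\varphi_n=\exists x\,\psi(x)$ is accepted, also adjoin $\psi(c_k)$ for the least unused $c_k$. Let $T^*$ collect all these commitments. The interpretation $j$ then has $\delta(x)$ expressing ``$x$ codes a closed term of the expanded language'', equality translated as $s\!\sim\! t\iff(s=t)\in T^*$, and each relation or function symbol of $\mathcal{L}_V$ translated according to membership of the corresponding atomic sentence in $T^*$. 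A formalized truth lemma for Henkin structures then yields $\psi^j\leftrightarrow(\psi\in T^*)$ for every sentence $\psi$ of $\mathcal{L}_V$, and since $V\subseteq T^*$ by construction this gives the required $\Box_V\psi\to\psi^j$ inside $U\cup\{{\sf Con}^J(V)\}$.

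The main obstacle is verifying, inside $U\cup\{{\sf Con}^J(V)\}$, that $T^*$ is consistent, since without consistency the truth lemma collapses. The strategy is that any inconsistency of $T^*$ can, by the standard elimination of Henkin witnesses (replace each $c_k$ by an existentially bound variable and use that $\exists x\,\psi(x)$ was already present when $c_k$ was introduced), be pushed back to an inconsistency of $V$. The delicate task is to ensure that the resulting proof of $\bot$ in $V$ has Gödel number inside $J$, so that ${\sf Con}^J(V)$ actually blocks it; here the ``Outside big, inside small'' lemma is indispensable, since it allows one to manufacture short certificates of $J$-membership even for stages $n$ that in principle lie outside $J$. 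Completeness of $T^*$ is immediate from the step-by-step construction, and the arithmetization of $j$ as a genuine $\la\delta(x),t\ra$ with an $\exists\Sigma^b_1$-definable description of $T^*$ is then routine.
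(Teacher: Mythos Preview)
Your outline has the right overall shape (Henkin expansion, Lindenbaum construction, term model, truth lemma), but the crucial step is mishandled, and the way you invoke ``outside big, inside small'' betrays the problem.

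The lemma $T\vdash\forallcut J\,\forall x\,\Box_U(\dot x\in\dot J)$ is an \emph{external} principle: for each fixed $x$ available in $T$, the theory $U$ has a proof (of length roughly $|x|$) that $x\in J$. It does \emph{not} give, inside $U$, the statement $\forall n\,(n\in J)$; indeed $J$ may be a proper cut of the $U$-universe. So when you write that it ``allows one to manufacture short certificates of $J$-membership even for stages $n$ that in principle lie outside $J$'', this is exactly backwards. Inside $U$, a putative inconsistency of $T^*$ may involve a nonstandard number of Henkin stages; the elimination of witnesses then yields a $V$-proof of $\bot$ whose code is comparable in size to that nonstandard object, and nothing forces it into $J$. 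Hence ${\sf Con}^J(V)$ does not block it, and your consistency argument for $T^*$ does not go through.

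The paper's proof avoids this trap by \emph{not} attempting to prove $T^*$ consistent inside $U$. Instead one shortens $J$ to a definable cut $I$ and runs the Lindenbaum construction only along $I$, obtaining a set $\mathcal M_V^I$ that is merely \emph{contradiction-free on $I$}. The truth lemma then takes the form
\[
\Box_U\,\forall\phi\,\big({\sf Con}^J(V)\wedge\Box_V^I\phi\to\phi^j\big),
\]
with the premise $\Box_V^I\phi$ rather than $\Box_V\phi$. The outside big, inside small principle is applied only at the \emph{outer} level: given a standard $V$-proof of $\psi$, its code is standard, so $T\vdash\Box_U\Box_V^I\psi$, and the formalized deduction theorem yields $\Box_{U\cup\{{\sf Con}^J(V)\}}\psi^j$. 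This is the correct direction of use: pushing standard witnesses \emph{into} the cut, not pulling nonstandard ones out.
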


\begin{proof}
(Sketch) The theory $T$ can verify that the usual Henkin construction can be formalized in $U$ without many problems where $J$ plays the role of the natural numbers. Instead of applying induction to obtain a maximal consistent set $\mathcal M_V$ as a consistent branch of infinite length in Lindenbaum's lemma, we can now only conclude that the length of the branch is within some cut $I$ which is a shortening of $J$ thereby yielding a set $\mathcal M_V^I$ which is contradiction-free on $I$. 

The set $\mathcal M_V^I$ can be used to obtain a term model and we define an interpretation $j:(U \cup \{ {\sf Con}^J (V) \} \rhd V)$ from the term model as usual so that provably $\phi^j \leftrightarrow \big( \phi \in \mathcal M_V^I\big)$. Note that since the interpretation of identity can be any equivalence relation, there is no need to move to equivalence classes in the construction of our term model.  
By construction we have $\Box_U \forall \phi\ \big( {\sf Con}^J (V) \wedge \Box_V^I \phi \to \phi^j \big)$. By the outside big, inside small principle and the formalized deduction theorem we now conclude that
\[
\forall \phi \ (\Box_V \varphi \to \Box_{U \cup \{ {\sf Con}^J (V) \} } \varphi)
\]
which, by \eqref{equation:definitionOfInterpretabilityFormalized} is nothing but $(U \cup \{ {\sf Con}^J (V) \} \rhd V)$. We refer to \cite{Visser:1991:FormalizationOfInterpretability} where one can see that the necessary induction for this argument is available in \sonetwo.
\end{proof}

Using these lemmas we can infer in various occasions the existence of small witnesses to provability.

\begin{lemma}\label{theorem:negatedRhdYieldsSmallWitness}
For any theory $T$ we have $T\vdash \neg (A\rhd \neg C) \to \forallcut K \Diamond (A\wedge \Box^{\dot K} C)$.
\end{lemma}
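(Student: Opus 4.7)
The plan is to argue by contraposition: reasoning inside $T$, assume there is a cut $K$ with $\Box\neg(A\wedge\Box^{\dot K} C)$, equivalently $\Box(A\to\neg\Box^{\dot K}_T C)$, and derive $A\rhd\neg C$, contradicting the hypothesis $\neg(A\rhd\neg C)$.

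The first step is to recognise that $\neg\Box^{\dot K}_T C$ is, provably in $T$ and uniformly in cuts $K$, equivalent to the cut-consistency statement ${\sf Con}^{\dot K}(T+\neg C)$. This uses only classical logic and a formalized deduction theorem localised to the cut $K$; the closure conditions in $\cut{K}$ provide the room needed to keep the transformed proof-witnesses inside $K$. After this rewriting the assumption reads $\Box\big(A\to{\sf Con}^{\dot K}(T+\neg C)\big)$.

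The second step is to feed $K$ together with the theories $U:=T+A$ and $V:=T+\neg C$ into the Formalized Henkin Construction lemma. This yields an interpretation $j$ witnessing $(T+A+{\sf Con}^{\dot K}(T+\neg C))\rhd(T+\neg C)$, that is,
\[
\forall\phi\ \big(\Box_{T+\neg C}\phi\to\Box_T\big(A\wedge{\sf Con}^{\dot K}(T+\neg C)\to\phi^j\big)\big).
\]
Combining this with $\Box(A\to{\sf Con}^{\dot K}(T+\neg C))$ via the formalized deduction theorem, the consistency hypothesis dissolves from the right-hand side, leaving
\[
\forall\phi\ \big(\Box_{T+\neg C}\phi\to\Box_T(A\to\phi^j)\big),
\]
which is precisely $(T+A)\rhd(T+\neg C)$, i.e.\ $A\rhd\neg C$.

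The only real obstacle is formalization bookkeeping: checking that the equivalence $\neg\Box^{\dot K}C\leftrightarrow{\sf Con}^{\dot K}(T+\neg C)$ and the subsequent dissolution of the consistency hypothesis both go through uniformly in the cut $K$ and within the modest resources of \sonetwo. Both rely only on the cut-closure axioms packaged in $\cut{K}$ and on the formalizability of basic proof-theoretic operations, which the preliminaries have already secured.
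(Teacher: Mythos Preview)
Your proof is correct and follows exactly the approach the paper indicates: argue by contraposition and invoke the formalized Henkin construction on the given cut. Your write-up merely supplies the details the paper leaves implicit (the rewriting of $\neg\Box^{\dot K}C$ as ${\sf Con}^{\dot K}(T+\neg C)$ and the subsequent absorption of the consistency hypothesis), so there is nothing to add.
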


\begin{proof}
Reason in arbitrary $T$ by contraposition and apply the Henkin construction on a cut.
\end{proof}

As a corollary to this lemma, we see that $(A\rhd B) \to \big( \neg (A\rhd \neg C) \rhd (B\wedge \Box C) \big) \in \intl{T}$ for any $T$ extending \sonetwo. It is an open problem to classify the modal principles that hold in any theory extending \sonetwo. This paper raises the previously known lower bound.
 
We formulate some other direct corollaries of the outside-big inside-small principle in the following useful lemma.

\begin{lemma}\label{theorem:K4WithCuts}
Let $T$ be any theory containing \sonetwo. We have that 
\begin{enumerate}
\item
$T\vdash \forall A\ (\, \Box \dot A \to \forallcut K\  \Box \Box^{\dot K} \dot A \, )$;

\item
$T\vdash \sigma \to \forallcut K\  \Box \sigma^{\dot K}$ for any formula $\sigma$ in $\exists \Sigma_1^b$;

\item
$T\vdash \forall \, C {\in} \exists \Sigma_1^b \, \forallcut J \ ( \exists x \, C \to \Box \, \exists\, x{\in}{\dot J} \, C)$.

\end{enumerate}
\end{lemma}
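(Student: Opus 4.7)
The plan is to derive all three items uniformly from two ingredients: the outside-big inside-small lemma (stated just above) and provable $\Sigma_1$-completeness (the lemma in the section on Arithmetic). In each case I would reason inside $T$, extract a witness, certify via $\Sigma_1$-completeness that $T$ proves the bounded part, and then use outside-big inside-small to force the witness into the cut.

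For part~(1), I would argue in $T$ as follows. Fix a formula $A$ and assume $\Box_T A$; since $\Box_T A$ is $\exists \Sigma_1^b$, $\sonetwo$ lets us extract a proof object $p$ with ${\sf Proof}_T(p, A)$. The formula ${\sf Proof}_T(\cdot,\cdot)$ is $\Delta_0^b$, hence provable $\Sigma_1$-completeness yields $\Box_T\, {\sf Proof}_T(\dot p, \dot A)$. Let $K$ be an arbitrary $T$-cut. The outside-big inside-small lemma applied to $p$ gives $\Box_T (\dot p \in \dot K)$. Combining these two boxed statements inside $\Box_T$ we obtain $\Box_T\, \exists q\, (\dot K(q) \wedge {\sf Proof}_T(q, \dot A))$, which, since bounded quantifiers are unaffected by the $K$-translation, is literally $\Box_T \Box_T^{\dot K} \dot A$.

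Part~(2) is the same pattern one level down. Write $\sigma = \exists y\, \phi(y)$ with $\phi \in \Sigma_1^b$, and assume $\sigma$. Fix a witness $y_0$. By provable $\Sigma_1$-completeness, $\Box_T \phi(\dot y_0)$, and by outside-big inside-small, $\Box_T (\dot y_0 \in \dot K)$ for any cut $K$. Combining these inside $\Box_T$, and noting again that $\phi^{\dot K} \equiv \phi$ because all quantifiers of $\phi$ are bounded and $K$ is closed under the relevant arithmetic operations, gives $\Box_T\, \exists y\, (\dot K(y) \wedge \phi(y))$, which is $\Box_T \sigma^{\dot K}$. Part~(3) then follows as an immediate instance: applied to $\sigma := \exists x\, C \in \exists \Sigma_1^b$, item~(2) says exactly $\exists x\, C \to \forallcut J\, \Box_T (\exists x\, \dot J(x) \wedge C)$, i.e.\ $\exists x\, C \to \Box_T\, \exists\, x{\in}\dot J\, C$.

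The only point requiring a little care — and the closest thing to an obstacle — is that the $K$-translation really does leave the $\Sigma_1^b$/bounded parts of the formulas untouched; this relies on the cut closure conditions in the definition of $\cut K$ (closure under $+$, $\cdot$, $\omega_1$) so that bounded quantifiers over $K$-elements coincide with the original bounded quantifiers, together with the fact that the formalization of syntactic predicates like ${\sf Proof}_T$ is $\Delta_0$ in the presence of $\omega_1$. Granting this, the three statements are transparent corollaries of outside-big inside-small and provable $\Sigma_1$-completeness.
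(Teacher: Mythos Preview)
Your proposal is correct and matches the paper's approach: the paper does not give an explicit proof but presents the lemma as ``direct corollaries of the outside-big inside-small principle,'' and your argument supplies exactly those details using outside-big inside-small together with provable $\Sigma_1$-completeness. One small remark: your reduction of~(3) to~(2) is not literally an ``instance,'' since applying~(2) to $\sigma := \exists x\, C$ yields $\Box\,\exists x{\in}\dot J\, C^{\dot J}$ rather than $\Box\,\exists x{\in}\dot J\, C$; but since $C^{\dot J}\to C$ trivially for $C\in\exists\Sigma_1^b$ (the relativized existential witness is also an unrelativized witness), the implication goes through.
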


One ingredient in proving interpretability principles arithmetically sound, is to find small witnesses. Another ingredient tells us how we can \emph{keep} these witnesses small. A simple generalization of Pudl\'ak's lemma which was first proved in \cite{Joosten:2015:OreyHajek} and tells us how to do so.

\begin{lemma}\label{theorem:KeepWitnessSmall}
If $j: \alpha \rhd \beta$ then, for every cut $I$ there exists a definable cut $J$ such that for every $\gamma$ we have that 
\[
T \vdash  \forallcut I\, \existscut J\, \exists^{\sf int} j\ \Big(j: (\alpha \wedge \Box^J \gamma) \rhd (\beta \wedge \Box^I \gamma )\Big).
\] 
\end{lemma}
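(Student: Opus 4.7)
My plan is to reuse the interpretation $j$ itself and to design $J$ so that the $j$-image of $J$ lands inside the $j$-translation of $I$. Recall that the interpretation $j:\alpha\rhd\beta$ comes with a progressive graph $F(x,y)$ that, inside $\alpha$, defines an isomorphism $f$ from the cut $J_0:=\{x:\exists y\, F(x,y)\}$ onto the $j$-interpreted natural numbers. So the whole effort concentrates on choosing a shortening $J\subseteq J_0$ with $f(J)\subseteq I^j$, where $I^j$ denotes the $j$-translation of the cut $I$.

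Concretely, I would set $K(x):= \exists y\,(F(x,y)\wedge I^{j}(y))$. Since $I$ is a cut, $I^j$ is provably a cut on the interpreted side, and combined with the progressiveness of $F$ one easily checks in $T$ that $K$ is progressive: $K(0)$ follows from $F(0,0^j)$ together with $I^{j}(0^j)$, and if $K(x)$ is witnessed by $y$, then $F(x{+}1, y{+}^{j} 1^{j})$ together with $I^{j}(y{+}^{j}1^{j})$ gives $K(x{+}1)$. Applying Solovay's shortening inside $\sonetwo$ to $K$ produces a genuine cut $J\subseteq K$ closed under $+$, $\times$ and $\omega_1$, definable uniformly from $I$ (and $j$), with $\Box_T\cut J$ available in $T$.

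With this $J$ in hand, the same $j$ should witness $j:(\alpha\wedge\Box^{J}\gamma)\rhd(\beta\wedge\Box^{I}\gamma)$. The interpretation of $\beta$ is the hypothesis. For $(\Box^{I}\gamma)^j$: assume $\alpha\wedge\Box^{J}\gamma$ and pick a proof $p$ of $\gamma$ with $J(p)$. Then $K(p)$ yields $y$ with $F(p,y)\wedge I^{j}(y)$. Because $f$ acts as a $\Delta_0$-preserving isomorphism between $J_0$ and $\delta$, and because the proof predicate $\mathrm{Proof}(p,\gamma)$ is a $\Delta_0$ statement about $p$ (the external parameter $\gamma$ lies in every cut by outside-big-inside-small), we obtain $\mathrm{Proof}^{j}(y,\gamma)$. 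Hence $y$ witnesses $(\Box^{I}\gamma)^j$, as required. Note that since $\gamma$ only enters through the parameter of this witness argument, a single $J$ produced from $I$ works uniformly for all $\gamma$.

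The main obstacles are essentially bookkeeping: (a) ensuring that Solovay's shortening of $K$ is both formalizable and uniform in the parameter $I$ inside $\sonetwo$, and (b) verifying that the isomorphism $f$ genuinely transports the $\Delta_0$ proof predicate from the outside to the $j$-interpreted side, with the standard subtlety that the external code $\gamma$ is identified with its $j$-counterpart via outside-big-inside-small. Both points are classical in the theory of interpretations in weak arithmetic (see Visser's formalization cited earlier), and once they are granted the construction is a straightforward strengthening of Pudlák's principle in which the outside cut $J$ is tightened to respect a preassigned inside cut $I$ rather than ranging over all of $\mathbb{N}$.
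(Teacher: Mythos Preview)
The paper does not actually prove this lemma; it only states it and attributes the proof to an external reference (Joosten, 2015), describing it as ``a simple generalization of Pudl\'ak's lemma''. Your sketch is precisely that generalization and is the expected argument: take the isomorphism $f$ supplied by Pudl\'ak's construction, pull the prescribed target cut $I$ back through $f$ to obtain a progressive class $K$, shorten $K$ \`a la Solovay to a genuine cut $J$, and then use $\Delta_0$-elementarity of $f$ on $J_0$ to transport proof witnesses. The handling of the parameter $\gamma$ via outside-big-inside-small is the standard manoeuvre. So your proposal is correct and, as far as one can tell from the paper's one-line description, coincides with the intended proof; there is nothing in the paper itself to compare against in more detail.
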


\subsection{Modal interpretability logics}

When working in interpretability logic, we shall adopt a reading convention that will allow us to omit many brackets. Thus, we say that the strongest binding `connectives' are $\neg$, $\Box$ and $\Diamond$ which all bind equally strong. Next come $\wedge$ and $\vee$, followed by $\rhd$ and the weakest connective is $\to$. Thus, for example, $A\rhd B \to A \wedge \Box C \rhd B\wedge \Box C$ will be short for $(A\rhd B) \to \big((A \wedge \Box C) \rhd (B\wedge \Box C)\big)$. 

If we do not disambiguate a formula of nested conditionals ($\to$ or $\rhd$), then this should be read as a conjunction. For example, $A\rhd B\rhd C$ should be read as $(A\rhd B) \wedge (B\rhd C)$ and likewise for implications. 

We first define the core logic \il which shall be present in any other interpretability logic. As before, we work in a propositional signature where apart from the classical connectives we have a unary modal operator $\Box$ and a binary modal operator $\rhd$.

\begin{definition}[\il]
The logic \il contains apart from all propositional logical tautologies, all instantiations of the following axiom schemes.

\begin{enumerate}
\item[${\sf L1}$]\label{ilax:l1}
        $\Box(A\rightarrow B)\rightarrow(\Box A\rightarrow\Box B)$
\item[${\sf L2}$]\label{ilax:l2}
        $\Box A\rightarrow \Box\Box A$
\item[${\sf L3}$]\label{ilax:l3}
        $\Box(\Box A\rightarrow A)\rightarrow\Box A$
\item[${\sf J1}$]\label{ilax:j1}
        $\Box(A\rightarrow B)\rightarrow A\rhd B$
\item[${\sf J2}$]\label{ilax:j2}
        $(A\rhd B)\wedge (B\rhd C)\rightarrow A\rhd C$
\item[${\sf J3}$]\label{ilax:j3}
        $(A\rhd C)\wedge (B\rhd C)\rightarrow A\vee B\rhd C$
\item[${\sf J4}$]\label{ilax:j4}
        $A\rhd B\rightarrow(\Diamond A\rightarrow \Diamond B)$
\item[${\sf J5}$]\label{ilax:j5}
        $\Diamond A\rhd A$
\end{enumerate}

The rules of the logic are Modus Ponens (from $A\to B$ and $A$, conclude $B$) and Necessitation (from $A$ conclude $\Box A$).
\end{definition}

It is not hard to see that $\il \subseteq \ilal$. By \ilm we denote the logic that arises by adding Montagna's axiom scheme 
\[
\principle M \ :\ \ \ \ A \rhd B \rightarrow A \wedge \Box C \rhd B \wedge \Box C
\]
to \il. It follows from our earlier observations that $\ilm \subseteq \intl T$ and the other inclusion can be proven too. 
\begin{theorem}[Berarducci \cite{Berarducci:1990:InterpretabilityLogicPA}, Shavrukov \cite{Shavrukov:1988:InterpretabilityLogicPA}]\label{theo:shav}
If $T$ proves full induction, then $\intl{T}=\ilm$.
\end{theorem}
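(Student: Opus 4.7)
The plan is to prove the two inclusions separately. The forward direction $\ilm \subseteq \intl T$ is the easier one and has essentially been set up in the material preceding this theorem: the axioms of \il are arithmetically valid in any extension of \sonetwo using the formalized Henkin construction on a cut, the outside-big inside-small principle, and formalized $\Sigma_1$-completeness; the \gl-axioms L1--L3 are just the usual Löb principles transferred to $\Box_T$. The axiom M is precisely Montagna's principle \eqref{equation:MontagnaForTheories}, whose derivation relies on full induction to upgrade an interpretation $j: U \rhd V$ into an end-extension and so propagate $\Sigma_1$ assumptions such as $\Box_T \varphi$ across $j$. Thus under full induction all schemes of \ilm belong to $\intl T$, and the two derivation rules are obviously sound.

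For the converse $\intl T \subseteq \ilm$, which is the Berarducci--Shavrukov arithmetical completeness theorem, I would follow Solovay's two-stage strategy. Stage one is modal completeness: establish that \ilm is complete with respect to a class of finite Veltman frames $(W, R, \{S_w\}_{w \in W})$ where $R$ is transitive and converse well-founded, each $S_w$ is a reflexive transitive relation on the $R$-successors of $w$ containing the $R$-restriction, and the M-frame condition $v S_w u \, R \, z \Rightarrow v R z$ holds; given $A \notin \ilm$, obtain a finite such model $\mathcal{M}$ refuting $A$ at some root $1$. Stage two is the arithmetical realization: build via a Gödel fixed-point in $T$ a primitive recursive function $h: \mathbb N \to W \cup \{0\}$ with a definable limit value $\ell$, and set $p^* := \bigvee\{\ell = i : i \Vdash p\}$. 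One then proves by induction on the modal formula $B$ that for every world $i \in W$, $i \Vdash B$ iff $T \vdash \ell = i \to B^*$, and $i \nVdash B$ iff $T \vdash \ell = i \to \neg B^*$. Applying this at $i = 1$, and using that in the non-standard model where $h$ stabilizes at $1$ the theory $T$ has a model, yields $T \nvdash A^*$.

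The principal obstacle is the inductive clause for $\rhd$. For $i \Vdash A \rhd B$, one must assemble an actual interpretation witnessing $(T + A^*) \rhd (T + B^*)$ by gluing together, via the formalized Henkin lemma, interpretations indexed by the finitely many $S_i$-successors of $A$-worlds in $\mathcal{M}$; Lemma \ref{theorem:KeepWitnessSmall} controls the sizes of $\Box$-witnesses propagated across this gluing. For $i \nVdash A \rhd B$, one must arrange $h$'s transitions so that, upon entering an appropriate witness world $v$ (with $v \Vdash A$ and no $v S_i u$ with $u \Vdash B$), the theory $T + B^*$ becomes genuinely non-interpretable in $T + A^*$, achieved by programming $h$ to jump precisely on a putative such interpretation. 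It is exactly in the preservation of $\Box C$-subformulas across the $S_i$-gluing that the M-frame condition is used, and this matches the deployment of full induction via Montagna's principle; without full induction one would only be allowed to replace $\Box$ by $\Box^J$ for a cut $J$, as in Lemma \ref{theorem:PudlaksPrinciple}, and one would lose precisely the axiom M.
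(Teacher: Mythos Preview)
The paper does not supply a proof of this theorem at all: it is stated as a cited result attributed to Berarducci and Shavrukov, and the text immediately moves on. So there is nothing in the paper to compare your proposal against; the theorem functions here purely as background.

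Your sketch is a reasonable outline of the standard literature proof and the overall architecture (soundness of \ilm via Montagna's principle plus arithmetical completeness via a Solovay-style moving function on a finite Veltman model satisfying the \principle{M} frame condition) is correct. A couple of minor remarks on the sketch itself: invoking Lemma~\ref{theorem:KeepWitnessSmall} in the $\rhd$-clause is out of place, since that lemma is about carrying $\Box$-witnesses through cuts and is precisely the workaround one needs \emph{without} full induction; with full induction one uses Montagna's principle directly and no cut bookkeeping is required. Also, the biconditional ``$i\Vdash B$ iff $T\vdash \ell=i\to B^*$'' is not quite how the truth lemma is usually stated (one proves the two implications $i\Vdash B\Rightarrow T\vdash \ell=i\to B^*$ and $i\nVdash B\Rightarrow T\vdash \ell=i\to\neg B^*$, which together with consistency of $T+\ell=i$ give what is needed). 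But these are cosmetic; the proposal would pass as a summary of the Berarducci--Shavrukov argument, which is all one could ask given that the paper itself offers none.
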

The logic \ilp arises by adding the axiom scheme
\[
\principle P \ :\ \ \ \ A \rhd B \rightarrow \Box (A \rhd B)
\]
to the basic logic \il. If $T$ is finitely axiomatizable it is easy to see that \eqref{equation:definitionOfInterpretabilityFormalized} is provably equivalent to a $\Sigma_1$ formula so that by provable $\Sigma_1$ completeness we see that $\ilp \subseteq \intl T$ for any finitely axiomatized theory $T$ that proves that exponentiation is a total function. If $T$ can moreover prove the totality of superexponentiation ${\tt supexp}$ then the inclusion can be reversed too. Here, ${\tt supexp}(x)$ is defined as $x\mapsto 2^x_x$ with $2^n_0:=n$ and $2^{n}_{m+1}:= 2^{(2^n_m)}$.

\begin{theorem}[Visser \cite{Visser:1990:InterpretabilityLogic}]
If $T$ is finitely axiomatizable and proves the totality of ${\tt supexp}$, then $\intl{T}=\ilp$.
\end{theorem}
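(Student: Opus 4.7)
The plan is to prove both inclusions. For soundness, $\ilp \subseteq \intl{T}$, all axioms and rules of \il are already valid under every realization in any theory extending \sonetwo, so only the schema $\principle{P}$ requires attention. Since $T$ is finitely axiomatized, say by a single sentence $\tau$, the formula $(A \rhd B)^{*} = (T+A^{*}) \rhd (T+B^{*})$ is provably equivalent to the $\exists\Sigma_1^b$-statement asserting the existence of a code of an interpretation $j$ together with a $T{+}A^{*}$-proof of $\tau^j \wedge (B^{*})^j$. Provable $\Sigma_1$-completeness (Lemma~\ref{theorem:K4WithCuts}(2), applied with a suitable cut) then delivers $A \rhd B \to \Box(A \rhd B)$ under every realization, giving $\principle{P}$.

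For the hard direction, $\intl{T} \subseteq \ilp$, I would carry out a Solovay-style arithmetical realization adapted to interpretability logic, in the spirit of Visser's original argument. Suppose $A \notin \ilp$. First I would invoke the finite model property of \ilp with respect to its Veltman semantics to obtain a finite Veltman model $(W, R, \{S_w\}_{w\in W}, \Vdash)$ with a root $r$ at which $A$ fails and from which every other world is $R$-accessible. Next, by a simultaneous arithmetical fixed point (formalizable already in \sonetwo), I would define sentences $\{\lambda_w : w \in W\}$ encoding that a Solovay-like process has settled at world $w$, and set $p^{*} := \bigvee_{w \Vdash p} \lambda_w$ for each propositional variable $p$.

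The core of the argument is then a realization lemma stating that $T \vdash \lambda_w \to (\phi^{*} \leftrightarrow (w \Vdash \phi))$ for every subformula $\phi$ of $A$, while $\lambda_r$ is consistent with $T$. The propositional and $\Box$-cases go by standard Solovay reasoning. The $\rhd$-case splits in the usual way: when $w \Vdash \phi \rhd \psi$, the required interpretations are manufactured from the formalized Henkin construction on a cut together with the finite axiomatizability of $T$, which allows us to upgrade cut-based interpretations to genuine interpretations using provable $\Sigma_1$-completeness exactly as in the soundness of $\principle{P}$; when $w \not\Vdash \phi \rhd \psi$, one must show that no such interpretation exists.

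The main obstacle will be this negative $\rhd$-case. One needs to control the lengths of the witnesses produced along $R$-chains in the finite Veltman model as candidate interpretations are chased through the model and re-composed. This is precisely where the totality of ${\tt supexp}$ enters: iterated applications of Lemma~\ref{theorem:KeepWitnessSmall} alongside the formalized Henkin construction produce towers of exponentials in proof-sizes, and the ${\tt supexp}$ hypothesis guarantees a common definable cut on which the whole argument lands. Any candidate interpretation at a world where $\phi \rhd \psi$ fails can then be defeated inside this cut, completing the construction of a realization with $T \not\vdash A^{*}$ and yielding $\intl{T} \subseteq \ilp$.
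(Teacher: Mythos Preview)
The paper does not actually prove this theorem. The sentence preceding the theorem statement sketches the easy inclusion $\ilp\subseteq\intl{T}$ in one line---finite axiomatizability makes formalized interpretability $\Sigma_1$, whence provable $\Sigma_1$-completeness yields $\principle{P}$---and then simply asserts that the reverse inclusion holds under the additional ${\tt supexp}$ hypothesis, deferring entirely to Visser's original paper \cite{Visser:1990:InterpretabilityLogic} for the proof. There is no argument here to compare your completeness sketch against.

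Your soundness paragraph matches the paper's one-line remark (note, though, that the paper mentions exponentiation rather than merely \sonetwo for this step). Your completeness outline is a reasonable high-level summary of the Solovay--Visser construction and correctly locates the role of ${\tt supexp}$ in bounding the iterated blow-up of witnesses along $R$-chains; but since the present paper offers nothing beyond the citation, no comparison is possible. If you intend to supply a full proof rather than a pointer to the literature, be aware that your sketch still hides substantial work: the precise form of the Solovay function for \ilp-frames, the exact fixed-point equations, and the quantitative bookkeeping in the negative $\rhd$-case all require care and are not consequences of the lemmas stated in this paper.
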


It follows that $\il \subseteq \ilal \subseteq (\ilp \cap \ilm)$. In this paper we shall focus on these bounds.

\subsection{Relational semantics}

We can equip interpretability logics with a natural relational semantics often referred to as Veltman semantics.

\begin{definition}
A \emph{Veltman frame} is a triple $\la W, R, \{S_x\}_{x\in W}\ra$ where $W$ is a non-empty set of \emph{possible worlds}, $R$ a binary relation on $W$ so that $R^{-1}$ is transitive and well-founded. The $\{S_x\}_{x\in W}$ is a collection of binary relations on $x\uparrow$ (where $x\uparrow:= \{ y\mid xRy \}$). The requirements are that the $S_x$ are reflexive and transitive and the restriction of $R$ to $x\uparrow$ is contained in $S_x$, that is $R\cap (x\uparrow) \subseteq S_x$. 

A \emph{Veltman model} consists of a Veltman frame together with a valuation $V : {\tt Prop} \to \mathcal P (W)$ that assigns to each propositional variable $p\in {\tt Prop}$ a set of worlds $V(p)$ in $W$ where $p$ is stipulated to be true. This valuation defines a forcing relation $\Vdash \ \subseteq W{\times} {\sf Form}$ telling us which formulas are true at which particular world:
\[
\begin{array}{lll}
x\Vdash \bot & & \mbox{for no $x\in W$};\\
x\Vdash A\to B & :\Leftrightarrow & x\nVdash  A \mbox{ or } x\Vdash B;\\
x\Vdash \Box A & :\Leftrightarrow & \forall y\ ( xRy \to y\Vdash  A);\\
x\Vdash A\rhd B & :\Leftrightarrow & \forall y\ \Big( xRy \wedge y\Vdash A \to \exists z \ (yS_xz \wedge z\Vdash  B\Big).
\end{array}
\]
For a Veltman model $\mathcal M = \la W, R, \{S_x\}_{x\in W}, V\ra$, we shall write $\mathcal M \models A$ as short for $\forall \, x {\in} W\ \mathcal M, x \Vdash A$.
\end{definition}
The logic \il is sound and complete with respect to all Veltman models (\cite{JonghVeltman:1990:ProvabilityLogicsForRelativeInterpretability}). Often one is interested in considering all models that can be defined over a frame. Thus, given a frame $\mathcal F$ and a valuation $V$ on $\mathcal F$ we shall denote the corresponding model by $\la \mathcal F, V \ra$. A \emph{frame condition} for a modal formula $\sf P$ is a formula $F$ (first or higher-order) in the language $\{ R, \{S_x\}_{x\in W} \}$ so that $\mathcal F \models F$ (as a relational structure) if and only if $\forall^{\sf valuation} V\ \la \mathcal F,V\ra \models {\sf P}$.

It is easy to establish that the frame condition for $\principle P$ is $xRyRzS_xu \to zS_yu$ where $xRyRzS_xu$ is short for $xRy \wedge yRz \wedge zS_xu$. Likewise, it is elementary to see that the frame condition for \principle M is given by $yS_xzRu\to yRu$. In this paper we shall compute the frame conditions for two new series of principles in \ilal.

Often we shall denote a valuation $V$ directly by the induced forcing relation $\Vdash$. Given a Veltman model $\la \mathcal F, \Vdash \ra$ we define a \emph{$C$-assuring successor} --denoted by $R^C_\Vdash$-- as follows
\[
xR^C_\Vdash y \ \ := \ \ \big(  xRy \wedge  y \Vdash C \ \wedge \ \forall z\ (yS_x z \to z\Vdash C)  \big).
\]

\section{A slim hierarchy of principles}

In this section we present a hierarchy of interpretability principles in \ilal of growing strength. For a well-behaved sub-hierarchy we shall compute the frame conditions and prove arithmetical soundness. There is no particular `slimness' inherent to the hierarchy presented here. The main reason for our name is that we tend to depict the frame conditions (see Figure \ref{figure:SlimHierarchy}) in a slim way as opposed to the depicted frame conditions for the series of principles that we refer to as a broad series of principles (see Figure \ref{figure:broadSeries}).

\subsection{A slim hierarchy}

Inductively, we define a series of principles as follows.

\[
\begin{array}{lll}
\principle{R_0} &:=& A_0 \rhd B_0 \to \neg (A_0 \rhd \neg C_0) \rhd B_0 \wedge \Box C_0\\
\ & \ & \ \\ 
\principle{R_{2n+1}}&:= & R_{2n} [\neg (A_n\rhd \neg C_n) /\neg (A_n\rhd \neg C_n) \wedge (E_{n+1}\rhd \Diamond A_{n+1});\\
\ & \ & 
B_n \wedge \Box C_n/B_n \wedge \Box C_n \wedge (E_{n+1} \rhd A_{n+1})]\\
\ & \ & \ \\ 
\principle{R_{2n+2}}&:=& R_{2n+1} [B_n/ B_n \wedge (A_{n+1}\rhd B_{n+1});\\
\ & \ & 
\Diamond A_{n+1} / \neg (A_{n+1}\rhd \neg C_{n+1});\\
\ & \ & 
(E_{n+1}\rhd A_{n+1})/ (E_{n+1}\rhd A_{n+1}) \wedge (E_{n+1} \rhd B_{n+1}\wedge \Box C_{n+1})
]\\
\end{array}
\]
As to illustrate how these substitutions work we shall calculate the first five principles.
\[
\begin{array}{lll}
\principle{R_0} &:=& A_0 \rhd B_0 \to \neg (A_0 \rhd \neg C_0) \rhd B_0 \wedge \Box C_0\\

\principle{R_1} & := & A_0 \rhd B_0 \to \neg (A_0 \rhd \neg C_0) \wedge (E_1 \rhd \Diamond A_1) \rhd B_0 \wedge \Box  C_0  \wedge (E_1\rhd A_1)\\

\principle{R_2} &:=& A_0 \rhd B_0 \wedge (A_1 \rhd B_1) \to \neg (A_0 \rhd \neg C_0) \wedge (E_1 \rhd \neg (A_1\rhd \neg C_1)) \ \rhd\\
\ & &\ \ \ \ \ \ \ \ \ \ \ \  \ \ B_0 \wedge (A_1 \rhd B_1) \wedge \Box  C_0  \wedge (E_1\rhd A_1) \wedge (E_1\rhd B_1 \wedge \Box C_1)\\

\principle{R_3} &:=& A_0 \rhd B_0 \wedge (A_1 \rhd B_1) \to \\
 & & \ \neg (A_0 \rhd \neg C_0) \wedge (E_1 \rhd \neg (A_1\rhd \neg C_1)\wedge (E_2 \rhd \Diamond A_2)) \ \rhd\\
\ & &\ \ \  B_0 \wedge (A_1 \rhd B_1) \wedge \Box  C_0  \wedge (E_1\rhd A_1) \wedge (E_1\rhd B_1 \wedge \Box C_1 \wedge (E_2\rhd A_2))\\

\principle{R_4} &:=& A_0 \rhd B_0 \wedge (A_1 \rhd B_1\wedge (A_2\rhd B_2)) \to \\
 & & \ \neg (A_0 \rhd \neg C_0) \wedge (E_1 \rhd \neg (A_1\rhd \neg C_1)\wedge (E_2 \rhd \neg( A_2\rhd \neg C_2))) \ \rhd\\
\ & &\ \ \  B_0 \wedge (A_1 \rhd B_1\wedge (A_2\rhd B_2)) \wedge \Box  C_0  \wedge (E_1\rhd A_1) \ \wedge \\
\ & &\ \ \  \big(E_1\rhd B_1\wedge (A_2\rhd B_2) \wedge \Box C_1 \wedge (E_2\rhd A_2)\wedge (E_2\rhd B_2 \wedge \Box C_2)\big)\\
\end{array}
\]

It is easy to see that the hierarchy defines a series of principles of increasing strength as expressed by the following lemma.

\begin{lemma}
For each natural number $n$ we have that $\extil{R_{n+1}}\vdash {\sf R_{n}}$.
\end{lemma}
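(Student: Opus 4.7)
The plan is, for each $n$, to exhibit a propositional substitution of schematic variables in ${\sf R_{n+1}}$ whose resulting instance is $\il$-equivalent to ${\sf R_n}$. Since every such instance is an axiom of $\extil{R_{n+1}}$, a short $\il$-argument then yields ${\sf R_n}$. The argument splits into two cases according to the parity of $n$.

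If $n=2m$, then ${\sf R_{2m+1}}$ arises from ${\sf R_{2m}}$ by conjoining $(E_{m+1}\rhd \Diamond A_{m+1})$ to the interpreting side of the outer $\rhd$ and $(E_{m+1}\rhd A_{m+1})$ to the interpreted side. The substitution to use is $E_{m+1}:=\bot$. Since $\bot \to \varphi$ is a propositional tautology, axiom ${\sf J1}$ together with necessitation gives $\vdash_\il \bot \rhd \varphi$ for every $\varphi$, so both added conjuncts become provable and can be absorbed. What remains is visibly $\il$-equivalent to ${\sf R_{2m}}$.

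If $n=2m+1$, then ${\sf R_{2m+2}}$ arises from ${\sf R_{2m+1}}$ by strengthening $B_m$ to $B_m\wedge (A_{m+1}\rhd B_{m+1})$, replacing $\Diamond A_{m+1}$ by $\neg(A_{m+1}\rhd \neg C_{m+1})$, and appending $(E_{m+1}\rhd B_{m+1}\wedge \Box C_{m+1})$. Here I substitute $C_{m+1}:=\top$ and $B_{m+1}:=A_{m+1}$. Then $A_{m+1}\rhd A_{m+1}$ is an $\il$-theorem (from $\Box(A_{m+1}\to A_{m+1})$ and ${\sf J1}$), $\Box\top$ is an $\il$-theorem, and the equivalence $\neg(A_{m+1}\rhd \bot)\leftrightarrow \Diamond A_{m+1}$ holds in $\il$, one direction using ${\sf J4}$ with $\neg \Diamond \bot$ and the other using ${\sf J1}$. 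Each added piece therefore collapses to a theorem or to a conjunct already present, and what remains is $\il$-equivalent to ${\sf R_{2m+1}}$.

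The supporting routine fact used throughout is that $\vdash_\il \alpha \leftrightarrow \alpha'$ and $\vdash_\il \beta \leftrightarrow \beta'$ imply $\vdash_\il (\alpha \rhd \beta) \leftrightarrow (\alpha' \rhd \beta')$, established by combining ${\sf J1}$ with the transitivity axiom ${\sf J2}$. The only real obstacle is bookkeeping: one has to verify that the substituted variables ($E_{m+1}$ in the first case, respectively $B_{m+1}$ and $C_{m+1}$ in the second) occur in ${\sf R_{n+1}}$ only in the slots introduced by the very construction being inverted, so that the substitution does not accidentally damage earlier components of the schema. Inspection of the two inductive clauses confirms this, since these variables are genuinely fresh in the step from ${\sf R_n}$ to ${\sf R_{n+1}}$.
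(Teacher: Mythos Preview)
Your proof is correct and follows essentially the same approach as the paper's: in each parity case you exhibit a substitution that trivialises the freshly introduced conjuncts, reducing $\principle{R}_{n+1}$ to an \il-equivalent of $\principle{R}_n$. The odd case is identical to the paper's (namely $C_{m+1}:=\top$, $B_{m+1}:=A_{m+1}$); in the even case the paper instead takes $E_{m+1}:=\Diamond\top$ and $A_{m+1}:=\top$, but your choice $E_{m+1}:=\bot$ works just as well and is arguably tidier, since a single substitution kills both new conjuncts via $\vdash_\il \bot\rhd\varphi$.
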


\begin{proof}
By an easy case distinction. We see that $\vdash_{\sf IL} {\principle{R}_{2n+1}} \to {\principle R_{2n}}$ by choosing $E_{n+1} := \Diamond \top$ and $A_{n+1}:= \top$. To see that 
$\vdash_{\sf IL} {\principle R_{2n+2}} \to {\principle R_{2n+1}}$ we choose $C_{n+1} := \top$ and $B_{n+1}:=A_{n+1}$.
\end{proof}

Thus, to understand the hierarchy well, it suffices to study a well-behaved co-final subsequence of it. To this end we define the following hierarchy.

For any $n\geq 0$ we define schemata $\xx_n$, $\yy_n$ and $\zz_n$ as follows.
\begin{align*}
\xx_0 &= A_0\rhd B_0;\\
\yy_0 &= \neg(A_0\rhd C_0);\\
\zz_0 &= B_0\wedge\Box C_0;\\
&\\
\xx_{n+1} &= A_{n+1}\rhd B_{n+1}\wedge (\xx_n);\\
\yy_{n+1} &= \neg(A_{n+1}\rhd \neg C_{n+1})\wedge(E_{n+1}\rhd \yy_n);\\
\zz_{n+1} &= B_{n+1}\wedge (\xx_n)\wedge\Box C_{n+1}\wedge(E_{n+1}\rhd A_n)\wedge(E_{n+1}\rhd \zz_n).
\end{align*}
For any $n\geq 0$ define
\[
\rr_n = \xx_n\rightarrow \yy_n\rhd \zz_n.
\]
To see how this proceeds, let us evaluate the first couple of instances:
\[
\begin{array}{lll}
\rr_0 & := & A_0 \rhd B_0 \to \neg (A_0 \rhd \neg C_0)\rhd B_0 \wedge \Box C_0;\\
\rr_1 & := & A_1 \rhd B_1\wedge (A_0\rhd B_0) \to \\
 & &\ \ \ \ \neg (A_1 \rhd \neg C_1)\wedge (E_1 \rhd \neg (A_0 \rhd \neg C_0))\ \rhd \\
 & & \ \ \ \ \ \ \ \ \ \ \ \ \ \ \ \ \  B_1 \wedge (A_0\rhd B_0) \wedge \Box C_1 \wedge (E_1 \rhd A_0)\wedge (E_1\rhd B_0 \wedge \Box C_0);\\
\rr_2 & := & A_2 \rhd B_2 \wedge (A_1 \rhd B_1\wedge (A_0\rhd B_0)) \to \\
 & &\ \ \ \ \neg (A_2 \rhd \neg C_2) \wedge \big(E_2\rhd \neg (A_1 \rhd \neg C_1)\wedge (E_1 \rhd \neg (A_0 \rhd \neg C_0))\big)\ \rhd \\
 & & \ \ \ \ \ \ \ \ \ \ \ \ \ \ B_2 \wedge (A_1 \rhd B_1\wedge (A_0\rhd B_0)) \wedge \Box C_2 \wedge (E_2\rhd A_1)\ \wedge \\
 & & \ \ \ \ \ \ \ \ \ \ \ \ \ \   \big(E_2\rhd  B_1 \wedge (A_0\rhd B_0) \wedge \Box C_1 \wedge (E_1 \rhd A_0)\wedge (E_1\rhd B_0 \wedge \Box C_0)\big);\\

\end{array}
\]
It is clear that the $\rr_k$ hierarchy is directly related to the $\principle R_k$ hierarchy:
\begin{lemma}\label{thm:subhierarchy}
For each natural number $k$ we have  $\principle R_{2k} := \rr_k [\mathbb X_i/ \mathbb X_{k-i};\, E_i/E_{k+1-i}]$, where $\mathbb X \in \{  A, B, C\}$.
\end{lemma}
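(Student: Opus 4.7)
The plan is to proceed by induction on $k$. For the base case $k = 0$, the substitution $[\mathbb X_i/\mathbb X_{-i}; E_i/E_{1-i}]$ is trivial on the relevant range of indices (only $\mathbb X_0\mapsto \mathbb X_0$ appears, and no $E_i$ is involved), so the claim reduces to the direct observation that both $\rr_0$ and $\principle R_0$ expand to $A_0 \rhd B_0 \to \neg (A_0 \rhd \neg C_0) \rhd B_0 \wedge \Box C_0$.

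The key observation driving the induction is a structural duality between the two hierarchies. The $\rr$-hierarchy grows by adding \emph{new outermost} layers that carry the largest index via the defining clauses $\xx_{k+1} = A_{k+1}\rhd B_{k+1} \wedge \xx_k$, $\yy_{k+1} = \neg(A_{k+1}\rhd\neg C_{k+1}) \wedge (E_{k+1}\rhd \yy_k)$, and $\zz_{k+1} = B_{k+1} \wedge \xx_k \wedge \Box C_{k+1} \wedge (E_{k+1}\rhd A_k) \wedge (E_{k+1}\rhd \zz_k)$. By contrast, $\principle R_{2k+2}$ is obtained from $\principle R_{2k}$ by substituting at its \emph{innermost} positions, again using the largest newly introduced index. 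The index-reversing substitution $[\mathbb X_i/\mathbb X_{k-i}; E_i/E_{k+1-i}]$ is precisely the mechanism that converts between these two growth modes.

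For the inductive step, I assume $\principle R_{2k} = \rr_k [\mathbb X_i/\mathbb X_{k-i}; E_i/E_{k+1-i}]$ and expand $\principle R_{2k+2}$ by consecutively applying the two substitution rules in the definitions of $\principle R_{2k+1}$ and $\principle R_{2k+2}$. Under the induction hypothesis, the innermost occurrences of $\neg(A_k\rhd\neg C_k)$ and $B_k \wedge \Box C_k$ in $\principle R_{2k}$ correspond, before relabelling, to $\yy_0$ and $\zz_0$ sitting at the deepest level of $\rr_k$. The new material introduced by the $\principle R$-substitutions — namely $(E_{k+1}\rhd \Diamond A_{k+1})$, then the replacements of $\Diamond A_{k+1}$ by $\neg(A_{k+1}\rhd\neg C_{k+1})$ and of $B_k$ by $B_k \wedge (A_{k+1}\rhd B_{k+1})$, and the addition of $(E_{k+1}\rhd B_{k+1}\wedge \Box C_{k+1})$ — matches, after relabelling via $[\mathbb X_i/\mathbb X_{k+1-i}; E_i/E_{k+2-i}]$, exactly the new outer layer contributed by $\xx_{k+1}$, $\yy_{k+1}$ and $\zz_{k+1}$ to $\rr_{k+1}$.

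The hard part is purely bookkeeping: one must check that (i) the replacement $B_k/B_k \wedge (A_{k+1}\rhd B_{k+1})$ in the definition of $\principle R_{2k+2}$ hits every occurrence of $B_k$ — both in the antecedent $\xx_k[\mathbb X_i/\mathbb X_{k-i}]$ and in the $\zz_0$-slot of the consequent — which corresponds to the fact that $\xx_k$ reappears inside both $\xx_{k+1}$ and $\zz_{k+1}$; and (ii) that applying $[\mathbb X_i/\mathbb X_{k+1-i}; E_i/E_{k+2-i}]$ to the inner $\xx_k, \yy_k, \zz_k$ appearing in $\rr_{k+1}$ yields the same result as first applying $[\mathbb X_i/\mathbb X_{k-i}; E_i/E_{k+1-i}]$ and then shifting all indices uniformly up by $1$. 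Once these two index calculations are confirmed, the identity reduces to a routine term-by-term comparison.
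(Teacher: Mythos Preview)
Your proposal is correct and follows the same inductive approach the paper takes; indeed, the paper's own proof consists of little more than the remark ``visual inspection for $k=0,1$; easy induction for the rest''. Your write-up supplies the structural explanation --- the inside/outside growth duality and the index-reversing relabelling --- that the paper leaves implicit. One small point the paper flags and you do not: for the substitution argument to go through cleanly one should treat the place-holders $A_i$, $B_i$, $C_i$, $E_i$ as propositional variables, since otherwise a schematic instance could have, say, $E_i$ occurring inside $A_i$ and the syntactic substitutions would interfere.
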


\begin{proof}
By visual inspection we see that it holds for $k=0,1$. It is proven in full generality by an easy induction. To prove the lemma, it is best to consider the place-holders like $A_i$ etc.~as propositional variables since otherwise in principle, for example, $A_i$ could contain $E_i$ as a subformula.
\end{proof}

For the remainder of this section, we shall focus on the $\rr_k$ hierarchy and begin by computing a collection of frame condititions.

\subsection{Frame conditions}

For any $n\geq0$ we define a ternary relation $\mathcal G_n(x,y,z)$ on Veltman-frames as follows.
\begin{align*}
\mathcal G_0(x,y,z) &= \forall u\, (zRu\Rightarrow yS_xu),\\
\mathcal G_{n+1}(x,y,z) &= \forall u\, \big(zRu\Rightarrow yS_xu\wedge \forall v\, (uS_xv\Rightarrow \mathcal G_n(z,u,v)\big).
\end{align*}
For every $n\geq0$ we define the first-order frame condition $\mathcal F_n$ as follows.
\[
\mathcal F_n = \forall\, w,x,y,z\ (wRxRyS_wz\Rightarrow \mathcal G_n(x,y,z)).
\]
The main result of this subsection is that $\mathcal F_{2n}$ is the frame correspondence of $\rr_n$. For $n=0$ this has been established in \cite{GorisJoosten:2011:ANewPrinciple}. It is easy to see that $\mathcal G_{n+1}(x,y,z)$ implies $\mathcal G_{n}(x,y,z)$ so that $\mathcal F_{n+1}$ also implies $\mathcal F_n$. The frame conditions $\mathcal F_k$ are depicted in Figure \ref{figure:SlimHierarchy} for the first three values of $k$.

\begin{figure}[h]
\begin{picture}(95,90)(0,0)
\unitlength=1,4mm
\put(-40,00){
\gasset{ExtNL=y,NLdist=0.2,Nw=1,Nh=1}
\node[NLangle=180](W)(50,0){$w$}
\node[NLangle=180](X0)(40,10){$x_0$}
\node[NLangle=180](Y0)(50,20){$y_0$}
\node[NLangle=0](X1)(60,20){$x_1$}
\node[NLangle=0](Y1)(50,30){$y_1$}
\drawedge(W,X0){}
\drawedge(X0,Y0){}
\drawedge(W,X1){}
\drawedge(X1,Y1){}
\drawbpedge[ELpos=30,ELdist=0.2](Y0,45,5,X1,225,5){$S_w$}
\drawbpedge[ELpos=60,ELdist=0.2,dash={1.5}0](Y0,45,5,Y1,225,5){$S_{x_0}$}
}
\put(-5,0){
\gasset{ExtNL=y,NLdist=0.2,Nw=1,Nh=1}
\node[NLangle=180](W)(50,0){$w$}
\node[NLangle=180](X0)(40,10){$x_0$}
\node[NLangle=180](Y0)(50,20){$y_0$}
\node[NLangle=0](X1)(60,20){$x_1$}
\node[NLangle=0](Y1)(50,30){$y_1$}
\node[NLangle=180](X2)(40,30){$x_2$}
\node[NLangle=180](Y2)(50,40){$y_2$}
\drawedge(W,X0){}
\drawedge(X0,Y0){}
\drawedge(W,X1){}
\drawedge(X1,Y1){}
\drawedge(X0,X2){}
\drawedge(X2,Y2){}
\drawbpedge[ELpos=30,ELdist=0.2](Y0,45,5,X1,225,5){$S_w$}
\drawbpedge[ELpos=60,ELdist=0.2](Y0,45,5,Y1,225,5){$S_{x_0}$}
\drawbpedge[ELside=r,ELpos=30,ELdist=0.2](Y1,135,5,X2,315,5){$S_{x_0}$}
\drawbpedge[ELside=r,ELpos=60,ELdist=0.2,dash={1.5}0](Y1,135,5,Y2,315,5){$S_{x_1}$}
}
\put(30,0){
\gasset{ExtNL=y,NLdist=0.2,Nw=1,Nh=1}
\node[NLangle=180](W)(50,0){$w$}
\node[NLangle=180](X0)(40,10){$x_0$}
\node[NLangle=180](Y0)(50,20){$y_0$}
\node[NLangle=0](X1)(60,20){$x_1$}
\node[NLangle=0](Y1)(50,30){$y_1$}
\node[NLangle=180](X2)(40,30){$x_2$}
\node[NLangle=180](Y2)(50,40){$y_2$}
\node[NLangle=180](X3)(60,40){$x_3$}
\node[NLangle=180](Y3)(50,50){$y_3$}
\drawedge(W,X0){}
\drawedge(X0,Y0){}
\drawedge(W,X1){}
\drawedge(X1,Y1){}
\drawedge(X0,X2){}
\drawedge(X2,Y2){}
\drawedge(X1,X3){}
\drawedge(X3,Y3){}
\drawbpedge[ELpos=30,ELdist=0.2](Y0,45,5,X1,225,5){$S_w$}
\drawbpedge[ELpos=60,ELdist=0.2](Y0,45,5,Y1,225,5){$S_{x_0}$}
\drawbpedge[ELside=r,ELpos=30,ELdist=0.2](Y1,135,5,X2,315,5){$S_{x_0}$}
\drawbpedge[ELside=r,ELpos=60,ELdist=0.2](Y1,135,5,Y2,315,5){$S_{x_1}$}
\drawbpedge[ELpos=30,ELdist=0.2](Y2,45,5,X3,225,5){$S_{x_1}$}
\drawbpedge[ELside=l,ELpos=60,ELdist=0.2,dash={1.5}0](Y2,135,5,Y3,315,5){$S_{x_2}$}
}
\end{picture}
\caption{From left to right we have depicted $\mathcal F_0$ to $\mathcal F_2$. Since $\mathcal F_{k+1}$ implies $\mathcal F_k$ we have only depicted the content of $\mathcal F_{k+1}$ which is new w.r.t.~$\mathcal F_k$. As such we should read the pictures as: ``if all un-dashed relations are as in the picture, then also the dashed relation should be present".}\label{figure:SlimHierarchy}
\end{figure}
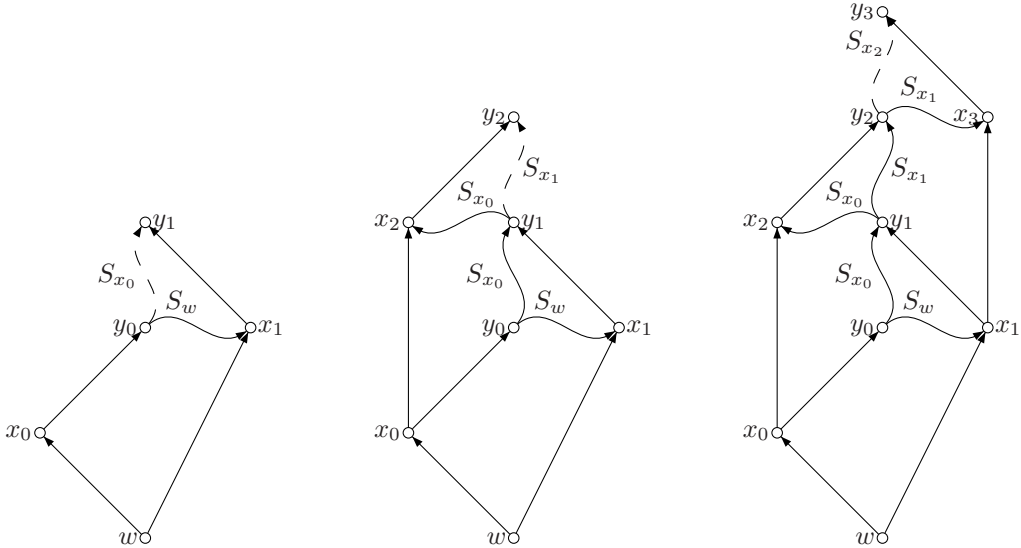

In what follows we let $F=\langle W,R,S\rangle$ be an arbitrary Veltman-frame.
With a forcing relation $\Vdash$ we will always mean a forcing relation on $F$.
For our convenience we define
\[
A_{-1}\equiv \xx_{-1}\equiv \zz_{-1}\equiv \top.
\]
Before we can prove a frame correspondence we first need a technical lemma.

\begin{lemma}\label{lemm:formpreserve0}
For all $k\geq 0$ and all $x,y,z\in W$.
If $\mathcal G_{2k}(x,y,z)$ then for any forcing relation $\Vdash$ for which
\[
x\Vdash \yy_k\quad\textrm{and}\quad xR^{C_k}_\Vdash y\quad\textrm{and}\quad z\Vdash \xx_{k-1},
\]
we also have
\[
z\Vdash\Box C_k\wedge(E_k\rhd A_{k-1})\wedge(E_k\rhd \zz_{k-1}).
\]
\end{lemma}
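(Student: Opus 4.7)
The plan is to proceed by induction on $k$. In the base case $k = 0$, note that $A_{-1} = \xx_{-1} = \zz_{-1} = \top$, so the conjuncts $E_0 \rhd A_{-1}$ and $E_0 \rhd \zz_{-1}$ both reduce to $E_0 \rhd \top$, which holds at every world by reflexivity of $S_z$. For $z \Vdash \Box C_0$, any $u$ with $zRu$ satisfies $yS_x u$ by $\mathcal G_0(x,y,z)$, and hence forces $C_0$ by the definition of $xR^{C_0}_\Vdash y$. The hypothesis on $\yy_0$ plays no role at this stage.

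For the inductive step, assume the result at level $k$ and the four premises at level $k+1$. The conjunct $z \Vdash \Box C_{k+1}$ follows exactly as in the base case, since $\mathcal G_{2k+2}$ entails its $\mathcal G_0$-style component. To deal with the other two conjuncts, fix $u$ with $zRu$ and $u \Vdash E_{k+1}$. From $\mathcal G_{2k+2}(x,y,z)$ we obtain $yS_x u$ (hence $xRu$) together with $\mathcal G_{2k+1}(z,u,v)$ for every $v$ with $uS_x v$. The conjunct $E_{k+1} \rhd \yy_k$ of $x \Vdash \yy_{k+1}$ now supplies such a $v$ with $v \Vdash \yy_k$, and the conjunct $\neg(A_k \rhd \neg C_k)$ of $\yy_k$ yields $v'$ with $vRv'$, $v' \Vdash A_k$, and $vR^{C_k}_\Vdash v'$. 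Applying $\mathcal G_{2k+1}(z,u,v)$ at $v'$ produces both $uS_z v'$ and $\mathcal G_{2k}(v,v',w)$ for every $w$ with $v'S_z w$. Since $v' \Vdash A_k$, this $v'$ already witnesses $z \Vdash E_{k+1} \rhd A_k$.

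For $z \Vdash E_{k+1} \rhd \zz_k$, use $z \Vdash \xx_k = A_k \rhd (B_k \wedge \xx_{k-1})$ at the successor $v'$ (we have $zRv'$) to produce $w$ with $v'S_z w$ and $w \Vdash B_k \wedge \xx_{k-1}$. Transitivity of $S_z$ gives $uS_z w$, and the previous step supplies $\mathcal G_{2k}(v,v',w)$. All four premises of the inductive hypothesis at level $k$ now hold for the triple $(v, v', w)$: the frame condition, $v \Vdash \yy_k$, $vR^{C_k}_\Vdash v'$, and $w \Vdash \xx_{k-1}$. Invoking the IH delivers $w \Vdash \Box C_k \wedge (E_k \rhd A_{k-1}) \wedge (E_k \rhd \zz_{k-1})$, which together with $w \Vdash B_k \wedge \xx_{k-1}$ yields $w \Vdash \zz_k$, completing the inductive step.

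The main subtlety is the double unfolding of $\mathcal G_{2k+2}$: one application yields $\mathcal G_{2k+1}(z,u,v)$ after ascending from $z$ via $R$ and $S_x$, and a second application yields $\mathcal G_{2k}(v,v',w)$ after a further $R$-step to $v'$ and an $S_z$-step to $w$. The even index $2k$ in the lemma reflects this: each induction stage consumes two levels of the $\mathcal G$-hierarchy, exactly matching the two hops needed to bring the IH to bear on the triple $(v, v', w)$.
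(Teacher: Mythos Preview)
Your proof is correct and follows essentially the same route as the paper's: induction on $k$, with the inductive step unfolding $\mathcal G_{2k+2}$ twice to reach a triple where the inductive hypothesis applies. The only cosmetic differences are that you spell out the $\Box C_0$ argument in the base case (the paper calls this ``trivial''), and you reuse the single $C_k$-assuring successor $v'$ to witness both $E_{k+1}\rhd A_k$ and $E_{k+1}\rhd\zz_k$, whereas the paper picks separate witnesses for the two conjuncts; your streamlining is harmless and arguably cleaner.
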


\begin{proof}
We shall write $xR^{C_k}y$ as short for $xR^{C_k}_\Vdash y$ and prove the claim by induction on $k$. With the convention that $A_{-1}\equiv \zz_{-1}\equiv\top$ the lemma is trivial for $k=0$.
So assume $k>0$. Let $\Vdash$ be a forcing relation and take $x$, $y$ and $z$ such that
\begin{align}
\label{l1} &x\Vdash\neg(A_k\rhd \neg C_k)\wedge(E_k\rhd \yy_{k-1}),\\
\label{l2} &xR^{C_k}y,\\
\label{l3} &z\Vdash \xx_{k-1},\\
\label{l4} &\mathcal G_{2k}(x,y,z).
\end{align}
Take an arbitrary $u\in W$ with $zRu$. By \eqref{l4} we have $yS_xu$ and thus by \eqref{l2} we have $u\Vdash C_k$.
This shows $z\Vdash\Box C_k$.

To show that also the other two conjuncts hold at $z$ assume that $u\Vdash E_k$.
By \eqref{l1} we find some $v$ with $uS_xv$ and
\begin{equation}\label{eq:ih0}
v \Vdash \yy_{k-1}.
\end{equation}
In order to show $z\Vdash E_k\rhd A_{k-1}$ we have to find some $a$ with $uS_za\Vdash A_{k-1}$.
Remark that $\yy_{k-1}$ implies $\Diamond A_{k-1}$ thus there exists some $a$ with $vRa\Vdash A_{k-1}$.
By \eqref{l4} we have $\mathcal G_{2k-1}(z,u,v)$ and thus $uS_za$.

In order to show that also $z\Vdash E_k\rhd \zz_{k-1}$ we have to find some $b$ with $uS_zb\Vdash \zz_{k-1}$.
We just used that $\yy_{k-1}$ implies $\Diamond A_{k-1}$, but remark that $\yy_{k-1}$ implies the stronger statement that $\neg(A_{k-1}\rhd \neg C_{k-1})$. Thus there exists some $a$ with
$a\Vdash A_{k-1}$ and 
\begin{equation}\label{eq:ih1}
vR^{C_{k-1}}a.
\end{equation}
As above, by \eqref{l4} we have $\mathcal G_{2k-1}(z,u,v)$ and thus
$uS_za$ and $zRa$.
By \eqref{l3} there exists a $b$ with $aS_zb$ and
\begin{equation}\label{eq:ih2}
b\Vdash B_{k-1}\wedge (\xx_{k-2}).
\end{equation}
Since $uS_za$ whence also $uS_zb$ holds, we will be done if we show that $b\Vdash \zz_{k-1}$.
To show that the remaining conjuncts of $\zz_{k-1}$ hold at $b$
(that is $b\Vdash \Box C_{k-1}\wedge (E_{k-1}\rhd A_{k-2})\wedge(E_{k-1}\rhd \zz_{k-2})$) simply observe that $\mathcal G_{2k-2}(v,a,b)$ and use \eqref{eq:ih0}, \eqref{eq:ih1} and
\eqref{eq:ih2} to invoke the (IH) on $v$, $a$ and $b$.
\end{proof}

\begin{corollary}\label{corr:frameToForm}
If $F\models \mathcal F_{2k}$, then $F\models\rr_k$.
\end{corollary}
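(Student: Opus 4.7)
The plan is to fix an arbitrary world $w$ and arbitrary forcing relation $\Vdash$ on $F$ and verify $w \Vdash \rr_k$ directly. Assume $w \Vdash \xx_k$, and pick $x_0$ with $wRx_0 \Vdash \yy_k$; the goal becomes to exhibit some $z$ with $x_0 S_w z$ and $z \Vdash \zz_k$.

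First I would unpack $\yy_k$ at $x_0$ to extract a witness $y_0$ for the negated interpretability conjunct. From $x_0 \Vdash \neg(A_k \rhd \neg C_k)$ one obtains $y_0$ with $x_0 R y_0$, $y_0 \Vdash A_k$, and every $S_{x_0}$-successor of $y_0$ forcing $C_k$; that is precisely $x_0 R^{C_k}_\Vdash y_0$. Since $R$ is transitive ($R^{-1}$ being transitive), $wRy_0$, and then from $w \Vdash A_k \rhd B_k \wedge \xx_{k-1}$ I obtain $z$ with $y_0 S_w z$ and $z \Vdash B_k \wedge \xx_{k-1}$.

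The crucial step is then to invoke the frame condition $\mathcal F_{2k}$ on the configuration $wRx_0 R y_0 S_w z$, which delivers $\mathcal G_{2k}(x_0, y_0, z)$. With that in hand, Lemma~\ref{lemm:formpreserve0} applied to $(x,y,z) = (x_0,y_0,z)$ immediately yields the remaining conjuncts of $\zz_k$ at $z$, namely $z \Vdash \Box C_k \wedge (E_k \rhd A_{k-1}) \wedge (E_k \rhd \zz_{k-1})$. Together with the already-established $z \Vdash B_k \wedge \xx_{k-1}$, this gives $z \Vdash \zz_k$.

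The final piece of bookkeeping is to upgrade $y_0 S_w z$ to $x_0 S_w z$. This I would do by the standard move: from $wRx_0$, $wRy_0$, $x_0 R y_0$ and the Veltman-frame axiom $R \cap (w{\uparrow}) \subseteq S_w$ one gets $x_0 S_w y_0$, and transitivity of $S_w$ then gives $x_0 S_w z$. I do not anticipate any serious obstacle here: the technical heavy lifting is already packaged into Lemma~\ref{lemm:formpreserve0}, and this corollary is essentially its repackaging via one application of the frame condition.
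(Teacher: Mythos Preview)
Your proposal is correct and follows essentially the same route as the paper's proof: extract $y_0$ witnessing $\neg(A_k\rhd\neg C_k)$, use $\xx_k$ at $w$ to obtain $z$ with $y_0 S_w z\Vdash B_k\wedge\xx_{k-1}$, apply $\mathcal F_{2k}$ to get $\mathcal G_{2k}(x_0,y_0,z)$, and then invoke Lemma~\ref{lemm:formpreserve0}. Your explicit bookkeeping step upgrading $y_0 S_w z$ to $x_0 S_w z$ via $R\cap(w{\uparrow})\subseteq S_w$ and transitivity of $S_w$ is a detail the paper leaves implicit, so if anything your write-up is slightly more complete.
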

\begin{proof}
Fix a forcing relation $\Vdash$ and let $w,x\in W$
such that $w\Vdash \xx_k$ and $wRx\Vdash \yy_k$.
Then for some $y$ we have $xR^{C_k}y\Vdash A_k$.
Thus there exists $z$ with $yS_wz$ and
\begin{equation}\label{eq:eq5}
z\Vdash B_k\wedge (\xx_{k-1})
\end{equation}
(recall $\xx_{-1}\equiv\top$).
Since $F\models \mathcal F_{2k}$ we have $\mathcal G_{2k}(x,y,z)$.
Thus by Lemma \ref{lemm:formpreserve0} we get
\begin{equation}\label{eq:eq6}
z\Vdash\Box C_k\wedge (E_k\rhd A_{k-1})\wedge(E_k\rhd \zz_{k-1})\enspace.
\end{equation}
Combining \eqref{eq:eq5} and \eqref{eq:eq6} gives $z\Vdash \zz_k$.
\end{proof}

The reversal of this corollary is again preceded by a technical lemma. We shall denote by $\bm a_k$, ${\bm b}_k$, ${\bm c}_k$, and ${\bm e}_k$, propositional variables that shall play the role of the $A_k$, $B_k$, $C_k$ and $E_k$ respectively in the principles $\rr_n$. Likewise, by $\xxp_k$ we shall denote the formula that arises by substituting $\bm a_j$ for $A_j$ in $\xx_k$ and $\bm b_j$ for $B_j$. The formulas $\yyp_k$ and $\zzp_k$ are defined similarly.

\begin{lemma}\label{lemm:lemm1}
For any $k\geq 0$ and all $x,y,z\in W$. If for all forcing relations $\Vdash$ for which
\[
x\Vdash \yyp_k\textrm{ and }xR^{\bm c_k}_\Vdash y\textrm{ and }z\Vdash \xxp_{k-1}
\]
we also have
\[
z\Vdash \Box \bm c_k\wedge(\bm e_k\rhd \bm a_{k-1})\wedge (\bm e_k\rhd \zzp_{k-1}),
\]
then $\mathcal G_{2k}(x,y,z)$.
\end{lemma}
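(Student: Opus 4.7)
I would prove the contrapositive by induction on $k$. Assuming $\neg\mathcal G_{2k}(x,y,z)$, I construct an explicit forcing relation $\Vdash$ on $F$ that makes the antecedents $x\Vdash\yyp_k$, $xR^{\bm c_k}_\Vdash y$, $z\Vdash \xxp_{k-1}$ all hold but makes at least one of the three conjuncts in $\Box\bm c_k\wedge(\bm e_k\rhd \bm a_{k-1})\wedge(\bm e_k\rhd \zzp_{k-1})$ fail at $z$.

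For the base case $k=0$, $\neg\mathcal G_0(x,y,z)$ supplies $u$ with $zRu$ and $\neg yS_xu$. Force $\bm a_0$ only at $y$, and force $\bm c_0$ everywhere except at $u$. Since $\bm a_{-1}\equiv \zzp_{-1}\equiv\top$, the only non-trivial consequent conjunct is $\Box\bm c_0$, which fails at $z$ via $u$; the antecedents are easy to verify because $u\notin y\!\uparrow\! S_x$.

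For the inductive step, unfolding $\neg\mathcal G_{2(k+1)}(x,y,z)$ gives three exhaustive situations: \textbf{(a)} some $u$ with $zRu$ and $\neg yS_xu$; \textbf{(b1)} some $u,v,u'$ with $zRu$, $yS_xu$, $uS_xv$, $vRu'$ and $\neg uS_zu'$; \textbf{(b2)} some $u,v,u',v'$ with $zRu$, $yS_xu$, $uS_xv$, $vRu'$, $uS_zu'$, $u'S_zv'$ and $\neg\mathcal G_{2k}(v,u',v')$. Case (a) is handled exactly as the base case, but now with the outer variables $\bm a_{k+1},\bm c_{k+1}$ in place of $\bm a_0,\bm c_0$, violating $\Box\bm c_{k+1}$ at $z$. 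In case (b1), force $\bm e_{k+1}$ at $u$ and arrange the valuation so that no $S_z$-successor of $u$ forces $\bm a_k$; crucially $u'$ is safe because $\neg uS_zu'$, so this violates $(\bm e_{k+1}\rhd\bm a_k)$ at $z$. In case (b2), apply the IH to $(v,u',v')$ to obtain a forcing $\Vdash_0$ (on variables of index $\le k$) with $v\Vdash_0\yyp_k$, $vR^{\bm c_k}_{\Vdash_0}u'$, $v'\Vdash_0\xxp_{k-1}$, and $v'\not\Vdash_0\Box\bm c_k\wedge(\bm e_k\rhd\bm a_{k-1})\wedge(\bm e_k\rhd\zzp_{k-1})$. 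Extend $\Vdash_0$ by forcing $\bm a_{k+1}$ only at $y$, $\bm c_{k+1}$ on (at least) $y\!\uparrow\! S_x$, and $\bm e_{k+1}$ at $u$. Then $v$ witnesses $\yyp_k$ for the $\bm e_{k+1}$-world $u$ as an $S_x$-successor (giving $x\Vdash(\bm e_{k+1}\rhd\yyp_k)$), while $v'\Vdash_0\xxp_{k-1}$ and the failure information on $v'$ combine to show $v'\not\Vdash\zzp_k$, so that $v'$ (an $S_z$-successor of $u$) witnesses the failure of $(\bm e_{k+1}\rhd\zzp_k)$ at $z$.

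The main obstacle is the bookkeeping in case (b2): one must simultaneously certify $x\Vdash\yyp_{k+1}$, $xR^{\bm c_{k+1}}y$, $z\Vdash\xxp_k$, and the targeted failure at $z$, all while consistently extending the IH forcing. In particular, $z\Vdash\xxp_k=\bm a_k\rhd(\bm b_k\wedge\xxp_{k-1})$ must be maintained despite having introduced a world ($u'$) with $u'\Vdash\bm a_k$ as an $R$-successor of $z$; the witness is $v'$, since $u'S_zv'$ and $v'\Vdash\bm b_k\wedge\xxp_{k-1}$ by the IH-properties of $\Vdash_0$. The remaining verifications are routine once the valuation of each new variable is localised to the relevant witness worlds, so that no extraneous $R$- or $S$-obligation is triggered elsewhere in $F$.
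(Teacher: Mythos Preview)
Your contrapositive induction is the mirror image of the paper's forward induction, and your case split (a), (b1), (b2) matches the paper's three items exactly. The base case and cases (a), (b1) are fine in outline. However, case (b2) as written has a genuine gap.

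The induction hypothesis hands you an \emph{arbitrary} forcing $\Vdash_0$; in particular you have no control over where $\bm a_k$ and $\bm b_k$ are forced under $\Vdash_0$. Three of your claims then fail:
\begin{itemize}
\item The assertion that $v'\Vdash \bm b_k\wedge\xxp_{k-1}$ ``by the IH-properties of $\Vdash_0$'' is wrong: the IH only gives $v'\Vdash_0\xxp_{k-1}$, it says nothing about $\bm b_k$ at $v'$.
\item You cannot verify $z\Vdash\xxp_k=(\bm a_k\rhd \bm b_k\wedge\xxp_{k-1})$, because $\bm a_k$ may be forced at many $R$-successors of $z$ under $\Vdash_0$, not just at $u'$.
\item To falsify $(\bm e_{k+1}\rhd\zzp_k)$ at $z$ you must show that \emph{no} $S_z$-successor of $u$ forces $\zzp_k$; exhibiting one failing successor $v'$ is not enough.
\end{itemize}

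The repair---and this is precisely the ``tweaking'' step the paper performs---is to additionally redefine $\bm a_k$ to hold only at $u'$ and $\bm b_k$ to hold only at $v'$. Then $z\Vdash\xxp_k$ holds via $u'S_zv'$, and since $\bm b_k$ is a conjunct of $\zzp_k$, only $v'$ could possibly force $\zzp_k$ among $S_z$-successors of $u$, and it does not. The nontrivial point is that this redefinition preserves the IH data: $v\Vdash\neg(\bm a_k\rhd\neg\bm c_k)$ is restored because $vR^{\bm c_k}u'$ and now $u'\Vdash\bm a_k$; and the failure $v'\not\Vdash\Box\bm c_k\wedge(\bm e_k\rhd\bm a_{k-1})\wedge(\bm e_k\rhd\zzp_{k-1})$ as well as $v'\Vdash\xxp_{k-1}$ survive because these formulas do not mention $\bm a_k$ or $\bm b_k$. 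Once you add this step, your argument and the paper's are the same proof read in opposite directions.
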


\begin{proof}
Induction on $k$. Let $x,y,z\in W$ and assume the conditions of the lemma.
Unfolding the definition of $\mathcal G_{2k}(x,y,z)$ shows us that we have to show that
\begin{enumerate}
\item\label{p1} for all $u$ with $zRu$ we have $yS_xu$ \ \ \ \ \ \ \ \ \  \ \ \ \ \ \ \ \ \  \ \ \ \ \ \ \ \ \  \ \ \ \ \ \ \ \ \  \ \  ($k\geq 0$);
\item\label{p2} and for all $v$ and $a$ with $uS_xv$ and $vRa$ we have $uS_za$   \ \ \ \ \ \ \ \ \ \ \ ($k> 0$);
\item\label{p3} and for all $b$ with $aS_zb$ we have $\mathcal G_{2(k-1)}(v,a,b)$  \ \  \ \ \ \ \ \ \ \ \  \ \ \ \ \ \ \ \ \ ($k> 0$).
\end{enumerate}
We will show \ref{p1} and \ref{p2} `by hand' and invoke the (IH) for \ref{p3}.
In each of the three cases we will choose similar but different forcing relations $\Vdash$.

We first show \ref{p1}.
So let $zRu$. Define
\[
w\Vdash  \bm c_k\Leftrightarrow yS_xw \quad \textrm{ and } \quad w\Vdash \bm a_k\Leftrightarrow w=y.
\]
And let all the other variables be false everywhere.
Then $xR^{\bm c_k}_\Vdash y$ and $x\Vdash\neg(\bm a_k\rhd \neg \bm c_k)$. 
Since none of the $\bm e_i$ nor $\bm a_j$ with $j\neq k$ holds anywhere in the model, we trivially have $x\Vdash \yyp_k$ and $z\Vdash \xxp_{k-1}$ and thus according to
the conditions of the lemma in particular $z\Vdash\Box \bm c_k$.
By definition of $\Vdash$ we thus have $yS_xu$ which proves \ref{p1}. Note that for $k=0$ we only have to look after \ref{p1} hence we have now dealt with the base case of our induction.

Now we continue to show \ref{p2} assuming $k>0$. Choose any $v$ and $a$ with $uS_xv$ and $vRa$.
As above define
\[
w\Vdash \bm c_k\Leftrightarrow yS_xw \quad \textrm{ and } \quad w \Vdash \bm a_k\Leftrightarrow w=y.
\]
We now also define
\[
\begin{array}{lr}
w\Vdash \bm e_k\Leftrightarrow w=u & \quad \textrm{ and, }\\ 
w\Vdash \bm a_{k-1} \Leftrightarrow w=a \Leftrightarrow w\Vdash \bm b_{k-1} & \quad \textrm{ and, } \\
w\Vdash \bm c_{k-1} \Leftrightarrow aS_z w.
\end{array}
\]

Let all the other propositional variables be false everywhere. Now $v\Vdash \yyp_{k-1}$ and thus $x\Vdash \yyp_k$. It is not hard to see that we also have $z\Vdash \xxp_{k-1}$ and thus
according to the condition of the lemma we have in particular $z\Vdash \bm e_k\rhd \bm a_{k-1}$.
Since $zRu\Vdash \bm e_k$ there must be an $a'$ with $uS_za'\Vdash \bm a_{k-1}$.
Since $a$ is the only world that forces $\bm a_{k-1}$ we must have $uS_za$.

To finish and show \ref{p3} choose $b$ such that $aS_zb$. We want to show that $\mathcal G_{2(k-1)}(v,a,b)$.
Invoking the (IH) it is enough to show that for any forcing relation $\Vdash$ for which
\begin{equation}\label{eq:c0}
v\Vdash \yyp_{k-1}, \quad \textrm{and}\quad vR^{\bm c_{k-1}}_\Vdash a\quad\textrm{and}\quad b\Vdash \xxp_{k-2},
\end{equation}
we also have
\begin{equation}\label{eq:chalf}
b\Vdash (\bm e_{k-1}\rhd \bm a_{k-2}) \wedge (\bm e_{k-1}\rhd \zzp_{k-2}) \wedge \Box\bm c_{k-1}.
\end{equation}
Our strategy in proving this is as follows. We slightly tweak $\Vdash$ to obtain $\Vdash'$. This $\Vdash'$ is similar to $\Vdash$ in that \eqref{eq:c0} still holds and moreover 
\begin{equation}\label{eq:TweakedVdashSimilar}
b\Vdash A \Leftrightarrow b\Vdash' A \quad \mbox{for subformulas $A$ of $(\bm e_{k-1}\rhd \bm a_{k-2}) \wedge (\bm e_{k-1}\rhd \zzp_{k-2}) \wedge \Box\bm c_{k-1}$}. 
\end{equation}
However, it is (possibly) different in that we now know that $x\Vdash' \yyp_k$, and $xR^{\bm c_k}_{\Vdash'}y$ and, $z\Vdash' \xxp_{k-1}$ so that we may apply the main assumption of the lemma to $\Vdash'$ concluding $z\Vdash' \Box \bm c_k\wedge(\bm e_k\rhd \bm a_{k-1})\wedge (\bm e_k\rhd \zzp_{k-1})$. The latter will help us conclude \eqref{eq:chalf}.

Thus we consider an arbitrary forcing relation $\Vdash$ that satisfies \eqref{eq:c0}.
We modify $\Vdash$ to obtain $\Vdash'$ such that it satisfies

\[
\begin{array}{lll}
w\Vdash' \bm a_k & \Leftrightarrow & w=y;\\
w\Vdash' \bm e_k & \Leftrightarrow & w=u;\\
w\Vdash'  \bm c_k & \Leftrightarrow & yS_xw;\\
w\Vdash'  \bm a_{k-1} & \Leftrightarrow & w=a;\\
w\Vdash' \bm b_{k-1} & \Leftrightarrow &  w=b.\\
\end{array}
\]
Apart from these modifications, $\Vdash'$ will coincide with $\Vdash$. It is a straightforward check to see that we have \eqref{eq:c0} for $\Vdash'$ and that moreover \eqref{eq:TweakedVdashSimilar} holds. In addition, by the definition of $\Vdash'$ we now also have
\begin{equation}\label{eq:c1}
x\Vdash' \yyp_k\quad\textrm{and}\quad xR^{\bm c_k}_{\Vdash'}y \quad\textrm{and} \quad z\Vdash' \xxp_{k-1}.
\end{equation}
Thus, we see that $\Vdash'$ satisfies the antecedent of the condition of the lemma.
Consequently, we have
$z\Vdash' \bm e_k\rhd \zzp_{k-1}$.
Since $zRu\Vdash' \bm e_k$, there must exist some $b'$ with $uS_zb'\Vdash'\zzp_{k-1}$.
But now, since $\bm b_{k-1}$ is a conjunct of $\zzp_{k-1}$ and $b$ is the only world that $\Vdash'$-forces $\bm b_{k-1}$, we must have $b\Vdash' \zzp_{k-1}$. 
In particular, we conclude $b\Vdash' (\bm e_{k-1}\rhd \bm a_{k-2})\wedge(\bm e_{k-1}\rhd \zzp_{k-2})\wedge\Box \bm c_{k-1}$; by \eqref{eq:TweakedVdashSimilar} the same holds for $\Vdash$ and we are done.
\end{proof}

Putting this all together gives us the frame correspondence for $\rr_k$.

\begin{theorem}
For any Veltman frame $F$ and any natural number $k\geq 0$ we have 
\[
F\models \mathcal F_{2k} \ \Longleftrightarrow \ F\models\rr_k  \ \Longleftrightarrow \ F\models \principle{R}_{2k}.
\]
\end{theorem}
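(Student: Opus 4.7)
The plan is to piece the theorem together from three separate facts: one half of the correspondence for $\rr_k$ is already available, the other half follows by a valuation-tweaking argument via Lemma \ref{lemm:lemm1}, and the correspondence for $\principle{R}_{2k}$ reduces to that of $\rr_k$ through Lemma \ref{thm:subhierarchy}.

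The implication $F\models\mathcal F_{2k}\Rightarrow F\models \rr_k$ is nothing but Corollary \ref{corr:frameToForm}. For the converse, fix $w,x,y,z\in W$ with $wRxRyS_wz$; one needs $\mathcal G_{2k}(x,y,z)$, and by Lemma \ref{lemm:lemm1} this reduces to showing that every forcing relation $\Vdash$ satisfying $x\Vdash\yyp_k$, $xR^{\bm c_k}_\Vdash y$ and $z\Vdash \xxp_{k-1}$ also satisfies $z\Vdash \Box\bm c_k\wedge(\bm e_k\rhd \bm a_{k-1})\wedge(\bm e_k\rhd \zzp_{k-1})$. Given such a $\Vdash$, I would define a modified forcing relation $\Vdash'$ that agrees with $\Vdash$ on every variable except $\bm a_k$ and $\bm b_k$, forcing $\bm a_k$ exactly at $y$ and $\bm b_k$ exactly at $z$. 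The hypotheses persist under $\Vdash'$: the conjunct $\neg(\bm a_k\rhd\neg\bm c_k)$ of $\yyp_k$ is witnessed at $x$ by $y$ itself via the $\bm c_k$-assuring property $xR^{\bm c_k}_{\Vdash'}y$ (unchanged, since $\bm c_k$ was not touched), while the other relevant formulas $\bm e_k\rhd \yyp_{k-1}$ and $\xxp_{k-1}$ do not mention $\bm a_k$ or $\bm b_k$. In addition, $w\Vdash'\xxp_k$ because the only $R$-successor of $w$ that $\Vdash'$-forces $\bm a_k$ is $y$, and $yS_wz$ with $z\Vdash'\bm b_k\wedge\xxp_{k-1}$.

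Applying $\rr_k$ at $w$ to $wRx$ and $x\Vdash'\yyp_k$ then produces some $z'$ with $xS_wz'$ and $z'\Vdash'\zzp_k$. Since $\bm b_k$ is a conjunct of $\zzp_k$ and is $\Vdash'$-forced only at $z$, we must have $z'=z$, hence $z\Vdash'\zzp_k$; the last three conjuncts of $\zzp_k$, namely $\Box\bm c_k$, $\bm e_k\rhd\bm a_{k-1}$ and $\bm e_k\rhd\zzp_{k-1}$, do not mention $\bm a_k$ or $\bm b_k$, so they also hold at $z$ under the original $\Vdash$. This yields the antecedent of Lemma \ref{lemm:lemm1} and hence $\mathcal G_{2k}(x,y,z)$. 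Finally, the equivalence $F\models \rr_k\iff F\models\principle{R}_{2k}$ is immediate from Lemma \ref{thm:subhierarchy}: it exhibits $\principle{R}_{2k}$ as $\rr_k$ under a bijective renaming of its propositional placeholders, and frame validity of a modal scheme is invariant under such a renaming.

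The main obstacle is calibrating the modification $\Vdash\mapsto\Vdash'$ in the second paragraph. The change must be aggressive enough to force $w\Vdash'\xxp_k$ and to pin down $z$ as the unique $\bm b_k$-world (so that the $\rr_k$-witness collapses onto $z$), yet gentle enough to preserve $x\Vdash\yyp_k$ together with the assuring-successor structure $xR^{\bm c_k}_\Vdash y$. Restricting the tweak to the top-level variables $\bm a_k$ and $\bm b_k$, which occur only in a few easily-controlled positions in the formulas involved, is what makes this balance possible.
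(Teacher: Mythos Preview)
Your proposal is correct and follows essentially the same approach as the paper: both directions of the first equivalence are handled via Corollary \ref{corr:frameToForm} and Lemma \ref{lemm:lemm1} respectively, with the same valuation tweak on $\bm a_k$ and $\bm b_k$ and the same uniqueness-of-$\bm b_k$ trick to pin the $\rr_k$-witness down to $z$; the second equivalence is dispatched via Lemma \ref{thm:subhierarchy} in both cases. Your write-up is in fact slightly more explicit than the paper's in verifying that the hypotheses survive the tweak and that the untouched conjuncts transfer back to $\Vdash$.
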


\begin{proof}
The second equivalence is a direct consequence of Lemma \ref{thm:subhierarchy} so we focus on the first equivalence. 

The $\Rightarrow$ direction is just Corollary \ref{corr:frameToForm}. For the other direction, fix some $k$, assume that $F\models\rr_k$ and let $wRxRyS_wz$. We have to show that $\mathcal G_{2k}(x,y,z)$.
Now consider any forcing relation $\Vdash$ that satisfies $xR^{\bm c_k}_\Vdash y$, and $x\Vdash \yyp_k$ and, $z\Vdash \xxp_{k-1}$. 
By Lemma \ref{lemm:lemm1} it is enough to show that
\begin{equation}\label{eq:eq7}
z\Vdash \Box \bm c_k \wedge (\bm e_k\rhd \bm a_{k-1}) \wedge (\bm e_k\rhd\zzp_{k-1}).
\end{equation}
Now consider a forcing relation $\Vdash'$ where $\Vdash'$ is like $\Vdash$ except that
\[
v\Vdash' \bm a_k \Leftrightarrow v=y \quad\textrm{and}\quad v\Vdash' \bm b_k\Leftrightarrow v=z.
\]
Notice that $xR^{\bm c_k}_{\Vdash'}y$ and thus also $x\Vdash' \yyp_k$.
But now we have $w\Vdash' \xxp_k$ as well and thus $w\Vdash' \yyp_k\rhd \zzp_k$.
Thus there must be some $z'$ with $xS_wz'\Vdash\zzp_k$.
Since $\bm b_k$ is a conjunct of $\zzp_k$ and $z$ is the only world where $\bm b_k$ is forced we must have
$z\Vdash' \zzp_k$. Since $\Box \bm c_k\wedge(\bm e_k\rhd \bm a_{k-1})\wedge(\bm e_k\rhd\zzp_{k-1})$
does not involve $\bm a_k$ nor $\bm b_k$ we have \eqref{eq:eq7}.
\end{proof}

\subsection{Arithmetical soundness}

Via a series of lemmata we shall prove Theorem \ref{theorem:hierarchyoneissound} to the effect that the hierarchy $\{\principle{R}_i\}_{i\in \omega}$ is arithmetically sound in any reasonable arithmetical theory.

\begin{theorem}\label{theorem:hierarchyoneissound}
Each of the $\principle R_i$ is arithmetically sound in any theory extending \sonetwo.
\end{theorem}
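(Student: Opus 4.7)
The plan is to first reduce the claim to the cofinal subhierarchy $\{\rr_k\}_{k\geq 0}$. By Lemma~\ref{thm:subhierarchy}, each $\principle{R}_{2k}$ is a substitution instance of $\rr_k$, so arithmetical soundness transfers; by the lemma preceding Lemma~\ref{thm:subhierarchy}, $\principle{R}_{2k+1}$ is derivable in $\il$ from $\principle{R}_{2k+2}$, so soundness of $\rr_{k+1}$ also covers $\principle{R}_{2k+1}$. Hence it suffices to show $T\vdash\rr_k^*$ for every $k\geq 0$ and every realization $*$, which I would do by induction on $k$. The base case $\rr_0$ is precisely the corollary to Lemma~\ref{theorem:negatedRhdYieldsSmallWitness} already noted in the paper: extract the small witness $\Diamond(A_0\wedge\Box^J C_0)$ via Lemma~\ref{theorem:negatedRhdYieldsSmallWitness} and reinterpret it via Lemma~\ref{theorem:KeepWitnessSmall} applied to $A_0\rhd B_0$.

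For the inductive step I would mirror the semantic argument of Lemma~\ref{lemm:formpreserve0}. Reasoning in $T$, assume $\xx_k$ and $\yy_k$. The ``base part'' $B_k\wedge\xx_{k-1}\wedge\Box C_k$ of $\zz_k$ is handled as in the base case: combine Lemma~\ref{theorem:negatedRhdYieldsSmallWitness} applied to $\neg(A_k\rhd\neg C_k)$ with Lemma~\ref{theorem:KeepWitnessSmall} applied to the interpretation witnessing $A_k\rhd B_k\wedge\xx_{k-1}$, arranged so that the inner cut for $\Box^J C_k$ is dominated by the cut of the outer interpretation. The conjunct $E_k\rhd A_{k-1}$ of $\zz_k$ follows from the conjunct $E_k\rhd\yy_{k-1}$ of $\yy_k$, since $\yy_{k-1}$ provably implies $\Diamond A_{k-1}$ (another application of Lemma~\ref{theorem:negatedRhdYieldsSmallWitness}) and $\Diamond A_{k-1}\rhd A_{k-1}$ by axiom $\sf J5$. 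For $E_k\rhd\zz_{k-1}$ I would invoke the induction hypothesis $T\vdash\rr_{k-1}^*$ inside the formalized $E_k$-interpretation (applying necessitation to $\xx_{k-1}\to\yy_{k-1}\rhd\zz_{k-1}$), using that the outer interpretation provided by $\xx_k$ supplies $\xx_{k-1}$ on the target side, thus discharging the antecedent of the induction hypothesis.

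The main obstacle is the bookkeeping of cuts: because $\zz_k$ contains nested formalized interpretability statements, proving them under the $j$-translation of the outer interpretation requires small $\exists\Sigma^b_1$ witnesses at every level. Lemma~\ref{theorem:KeepWitnessSmall} and the third clause of Lemma~\ref{theorem:K4WithCuts} are the precise tools here: the former produces an interpretation whose target provably derives a prescribed formula within a prescribed cut, and the latter shrinks an existential $\exists\Sigma^b_1$ witness into a prescribed cut. One must pick a nested sequence of cuts---one per level of nesting in $\zz_k$---so that each small witness produced at level $i$ survives as a small witness at level $i{+}1$ while still lying inside the cut of the outer interpretation. Once this cut management is in place, the modal induction of Lemma~\ref{lemm:formpreserve0} translates step by step into an arithmetical derivation of $\yy_k\rhd\zz_k$ from $\xx_k$, completing the inductive step.
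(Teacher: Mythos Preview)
Your reduction to the cofinal subhierarchy $\{\rr_k\}$ and your handling of the base case are the same as the paper's. The inductive scheme you sketch, however, has a genuine gap at the conjunct $E_k\rhd\zz_{k-1}$.

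You propose to obtain $E_k\rhd\zz_{k-1}$ on the target side by invoking the induction hypothesis $\rr_{k-1}$, i.e.\ $\xx_{k-1}\to(\yy_{k-1}\rhd\zz_{k-1})$, noting that $\xx_{k-1}$ is available there. But to turn $\yy_{k-1}\rhd\zz_{k-1}$ into $E_k\rhd\zz_{k-1}$ you would need $E_k\rhd\yy_{k-1}$ \emph{on the target side}, and that conjunct lives only on the source side (inside $\yy_k$). The statement $E_k\rhd\yy_{k-1}$ is genuinely $\Sigma_3$, so Pudl\'ak's principle and Lemma~\ref{theorem:KeepWitnessSmall} do not transport it through the outer interpretation $A_k\rhd B_k\wedge\xx_{k-1}$. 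Your final paragraph correctly senses that $\exists\Sigma_1^b$ witnesses must be produced ``at every level'', but this is not merely cut bookkeeping layered on top of the IH-based argument: it is the replacement for that argument.

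The paper's fix is to define, by recursion on $k$, an explicit $\exists\Sigma_1^b$ sentence $\mathcal H_{k}$ (nested cuts $J_k,K_k,\ldots$ packaging the data $\Diamond^{J}(A_{k-1}\wedge\Box^{K}C_{k-1}\wedge\mathcal H_{k-1}^{K})$) and to prove two auxiliary facts by their own inductions: first $E_{k}\rhd\yy_{k-1}\to\mathcal H_{k}$ (so the $\Sigma_3$ information is compressed into something Pudl\'ak can move), and second $(\xx_{k-1})\wedge\mathcal H_{k}\to E_{k}\rhd\zz_{k-1}$ (so the compressed information, together with $\xx_{k-1}$ on the target, reconstructs the needed conjunct). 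The main theorem then goes through uniformly for each $k$ using these two lemmas; the hypothesis $\rr_{k-1}$ is never invoked as such. What your proposal is missing is the definition of $\mathcal H_k$ and the second of these lemmas, which does the real work that you tried to delegate to the induction hypothesis.
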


It is sufficient to prove that each of the $\principle R_{2m}$ is arithmetically sound in any reasonable arithmetical theory whence we shall focus on the principles $\rr_i$. We shall first exhibit a soundness proof of $\rr_1$ and then indicate how this is generalized to the rest of the hierarchy. And before proving $\rr_1$ we need some auxiliary lemmas.

\begin{lemma}\label{theorem:TheYformulasProveSigmaFormulas}
Let $T$ be any theory extending \sonetwo. We have that for any arithmetical sentences $E_1, A_0, B_0$ and $C_0$ that
\[
T\vdash E_1 \rhd \neg (A_0 \rhd \neg C_0) \to \existscut J \ \Box\, \big ( E_1 \to \forallcut\,  K{\in}\dot J\ \Diamond^{\dot J} (A_0 \wedge \Box^{\dot K}C_0)\big).
\]
\end{lemma}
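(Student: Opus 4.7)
The plan is to reason in $T$ under the hypothesis, first extracting the desired cut $J$ via Pudl\'ak's principle (Lemma~\ref{theorem:PudlaksPrinciple}), and then transferring the small-witness conclusion of Lemma~\ref{theorem:negatedRhdYieldsSmallWitness} through that cut into the required $\Box$-statement about $E_1$.

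Assuming $E_1 \rhd \neg(A_0\rhd\neg C_0)$, Pudl\'ak's principle yields a cut $J$ such that for every $\sigma \in \exists\Sigma_1^b$,
\[
(T+E_1+\sigma^{\dot J})\,\rhd\,(T+\neg(A_0\rhd\neg C_0)+\sigma).
\]
By the consistency-preserving direction of this interpretation, taking contrapositives I obtain the uniform Pudl\'ak transfer
\[
\Box(\neg(A_0\rhd\neg C_0)\to\neg\sigma)\ \longrightarrow\ \Box(E_1\to\neg\sigma^{\dot J}).
\]
The antecedents are supplied by Lemma~\ref{theorem:negatedRhdYieldsSmallWitness}: taking $\sigma_K := \Box\neg(A_0\wedge\Box^{\dot K}C_0)$---which is $\Sigma_1$, and whose relativization computes to $\sigma_K^{\dot J}\equiv \Box^{\dot J}\neg(A_0\wedge\Box^{\dot K}C_0)$ since only the outer $\exists p$ gets relativized---Necessitation applied to that lemma gives $\Box(\neg(A_0\rhd\neg C_0)\to\neg\sigma_K)$, and the transfer then delivers $\Box(E_1\to\Diamond^{\dot J}(A_0\wedge\Box^{\dot K}C_0))$ for each cut $K$.

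The main obstacle is the final uniformization: packaging this $K$-indexed family of $\Box$-statements into the single statement $\Box(E_1 \to \forallcut K{\in}\dot J\ \Diamond^{\dot J}(A_0\wedge\Box^{\dot K}C_0))$. My approach will be to formalize the composition of the two ingredients entirely inside $\Box_T$. Both ingredients are manifestly uniform in $K$: Pudl\'ak's principle delivers a \emph{single} interpretation with a fixed cut $J$ handling all $\sigma$ at once, while the Henkin-style proof of Lemma~\ref{theorem:negatedRhdYieldsSmallWitness} treats $K$ as a parameter. Consequently, the uniform $T+\neg(A_0\rhd\neg C_0)$-proof of the universal statement $\forallcut K\,\Diamond(A_0\wedge\Box^{\dot K}C_0)$ translates, through the single Pudl\'ak interpretation, into a uniform $T+E_1$-proof of the $J$-relativized universal statement. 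The delicate bookkeeping amounts to checking that a cut $K$ on the source side of the interpretation corresponds to a $T$-cut contained in $\dot J$ on the target side, and that the relativization of $\Box^{\dot K} C_0$ inside $\sigma_K^{\dot J}$ collapses back to $\Box^{\dot K}C_0$ (either because the relativization only touches the outer provability quantifier, or via the outside-big, inside-small principle when $K\subseteq J$).
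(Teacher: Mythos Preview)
Your proposal is correct and uses the same two ingredients as the paper (Lemma~\ref{theorem:negatedRhdYieldsSmallWitness} and Pudl\'ak's principle), but the order in which you apply them creates an artificial obstacle. The paper first invokes Lemma~\ref{theorem:negatedRhdYieldsSmallWitness} \emph{under the interpretability}, obtaining $E_1 \rhd \forallcut K\,\Diamond(A_0\wedge\Box^{\dot K}C_0)$, and only then applies Pudl\'ak's lemma once, with the single $\exists\Sigma_1^b$ sentence $\existscut K\,\Box\neg(A_0\wedge\Box^{\dot K}C_0)$. Since the target theory together with this sentence is inconsistent, Pudl\'ak yields a cut $J$ with $E_1 \wedge \existscut K{\in}\dot J\,\Box^{\dot J}\neg(A_0\wedge\Box^{\dot K}C_0)\rhd\bot$, which is provably equivalent to the desired conclusion in one step.

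By contrast, you apply Pudl\'ak first and then instantiate with a $K$-indexed family of $\sigma_K$'s, which forces you to confront the uniformization over $K$ as a separate problem. Your resolution of that problem in the final paragraph---passing the \emph{universal} statement $\forallcut K\,\Diamond(A_0\wedge\Box^{\dot K}C_0)$ through the single Pudl\'ak interpretation---is exactly the paper's move, just arrived at after a detour. Reversing the order of the two lemmas makes the ``delicate bookkeeping'' disappear: the quantifier over $K$ is already inside the formula handed to Pudl\'ak, and the relativization question about $\Box^{\dot K}C_0$ never arises because that subformula lives inside a G\"odel number and is untouched by the $J$-relativization.
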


\begin{proof}
Reason in $T$ and assume $E_1 \rhd \neg (A_0 \rhd \neg C_0)$. Note that by Lemma \ref{theorem:negatedRhdYieldsSmallWitness} we have $E_1 \rhd \forallcut K \Diamond (A_0\wedge \Box^{\dot K} C_0)$. Consequently, by Pudl\'ak's Lemma, Lemma \ref{theorem:PudlaksPrinciple}, we get $\exists J\ \big( E_1 \wedge \existscut\, K{\in} \dot J  \ \Box^{\dot J}\neg (A_0\wedge \Box^{\dot K} C_0) \rhd \bot \big)$. But this is provably the same as $\existscut J \ \Box\, \big ( E_1 \to \forallcut\,  K{\in}\dot J\ \Diamond^{\dot J} (A_0 \wedge \Box^{\dot K}C_0)\big)$ as was to be shown.
\end{proof}
\noindent

\begin{lemma}\label{theorem:EOneInterpretsAZero}
Let $T$ be any theory extending \sonetwo. We have that for any arithmetical sentences $E_1, A_0, B_0$ and $C_0$ that 
\[
T\vdash \existscut J \ \Box\, \big ( E_1 \to \forallcut\,  K{\in}\dot J\ \Diamond^{\dot J} (A_0 \wedge \Box^{\dot K}C_0)\big) \to E_1 \rhd A_0.
\]
\end{lemma}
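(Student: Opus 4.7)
\noindent My plan is to observe that the conclusion $E_1 \rhd A_0$ involves neither $C_0$ nor the cut $K$, so most of the hypothesis is pure overhead. The useful content is only the instance obtained by weakening $A_0 \wedge \Box^{\dot K}C_0$ to $A_0$ (and dropping the universal quantifier over $K$), which gives, provably in $T$,
\[
\Box\bigl(E_1 \to \Diamond^{\dot J} A_0\bigr).
\]
Recall that $\Diamond^{J} A_0 = \neg\Box_T^J \neg A_0$ is exactly the statement of the $J$-consistency of $T+A_0$. So the hypothesis reduces to: there is a cut $J$ such that $T$ provably shows that $E_1$ implies ${\sf Con}^J(T+A_0)$.

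\noindent From here the proof is a two-step chain via $\rhd$. First, since $\Box\bigl(E_1 \to {\sf Con}^J(T+A_0)\bigr)$ is provable, axiom $\sf J1$ yields
\[
E_1 \rhd {\sf Con}^J(T+A_0).
\]
Second, the formalized Henkin construction lemma (applied to the cut $J$, with $U = T$ and $V = T+A_0$) gives
\[
{\sf Con}^J(T+A_0) \rhd A_0.
\]
Finally, transitivity of interpretability ($\sf J2$) glues these together to yield $E_1 \rhd A_0$, as required.

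\noindent I do not anticipate any real obstacle: all the heavy lifting has been packaged into the formalized Henkin construction lemma and the outside-big/inside-small machinery used earlier. The only mild care needed is to make sure that the witnessing cut $J$ extracted from the hypothesis is indeed the same cut fed into the Henkin construction, and that the quantification $\existscut J$ on the outside can be pulled out cleanly; since both the hypothesis and the Henkin lemma are stated uniformly in an arbitrary $T$-provable cut, this is unproblematic. The $\forallcut K \in \dot J$ and the conjunct $\Box^{\dot K} C_0$ in the antecedent play no role here — they are present precisely so that the same hypothesis can be reused, in the more involved Lemmas further on, to drive the inductive soundness argument for the higher principles $\rr_n$.
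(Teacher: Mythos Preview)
Your proof is correct and follows essentially the same route as the paper: weaken the antecedent to $\existscut J\ \Box(E_1 \to \Diamond^{\dot J} A_0)$, use ${\sf J1}$ to get $E_1 \rhd \Diamond^{\dot J} A_0$, then apply the formalized Henkin construction to obtain $\Diamond^{\dot J} A_0 \rhd A_0$, and finish by ${\sf J2}$. The paper's proof is the same chain, just written in one line as $\existscut J\ E_1 \rhd \Diamond^{\dot J} A_0 \rhd A_0$.
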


\begin{proof}
Reason in $T$. From the assumption we get in particular that $\existscut J \ \Box\, \big ( E_1 \to \Diamond^{\dot J}A_0 \big)$ so that $\existscut J\ E_1\rhd \Diamond^{\dot J}A_0 \rhd A_0$.
\end{proof}

\begin{lemma}\label{theorem:EOneInterpretsZZero}
Let $T$ be any theory extending \sonetwo. We have that for any arithmetical sentences $E_1, A_0, B_0$ and $C_0$ that 
\[
T\vdash (A_0 \rhd B_0) \wedge \existscut J \ \Box\, \big ( E_1 \to \forallcut\,  K{\in}\dot J\ \Diamond^{\dot J} (A_0 \wedge \Box^{\dot K}C_0)\big) \to E_1 \rhd B_0 \wedge \Box C_0.
\]
\end{lemma}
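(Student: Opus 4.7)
The plan is to combine Pudl\'ak's Lemma with the Formalized Henkin construction to bridge between the cut-bounded witness supplied by the hypothesis and the full $\Box C_0$ needed in the conclusion.

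First I would reason inside $T$ and assume both conjuncts of the antecedent. Let $J$ be the cut supplied by the $\existscut J$ in the hypothesis. Applying Pudl\'ak's Lemma (Lemma \ref{theorem:PudlaksPrinciple}) to $A_0 \rhd B_0$, with the $\Sigma_1$-formula $\Box_T C_0$ in the role of $\exists x\,\psi$, I obtain a cut $L$ such that
\[
A_0 \wedge \Box^L C_0 \; \rhd \; B_0 \wedge \Box C_0. \qquad (\star)
\]
This is the key bridge, as it trades a cut-bounded witness on the left for an unrestricted $\Box C_0$ on the right.

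Next, I would form the common shortening $K := J \cap L$, defined by $K(x) := J(x) \wedge L(x)$. Since the intersection of two cuts is again a cut, $T \vdash \cut{K}$, so $K$ is available as an instance of the quantifier $\forallcut K{\in}\dot J$ in the hypothesis. Instantiating gives $\Box(E_1 \to \Diamond^J(A_0 \wedge \Box^K C_0))$, whence by ${\sf J1}$, $E_1 \rhd \Diamond^J(A_0 \wedge \Box^K C_0)$. The Formalized Henkin construction applied with the cut $J$ (and unfolding $\mathrm{Con}^J$ of the finitely axiomatized extension as $\Diamond^J$ of the conjunction of its axioms) yields $\Diamond^J(A_0 \wedge \Box^K C_0) \rhd A_0 \wedge \Box^K C_0$. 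Transitivity (${\sf J2}$) then produces $E_1 \rhd A_0 \wedge \Box^K C_0$.

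Finally, since $K(x) \to L(x)$ is provable in $T$, we have $\Box^K C_0 \to \Box^L C_0$, and thus by ${\sf J1}$, $A_0 \wedge \Box^K C_0 \rhd A_0 \wedge \Box^L C_0$. Chaining this with $(\star)$ via ${\sf J2}$ gives $A_0 \wedge \Box^K C_0 \rhd B_0 \wedge \Box C_0$; a final application of ${\sf J2}$ to the previously obtained $E_1 \rhd A_0 \wedge \Box^K C_0$ produces the desired $E_1 \rhd B_0 \wedge \Box C_0$. The main obstacle is keeping the cut bookkeeping straight: one must verify that $K := J\cap L$ is genuinely a $T$-cut (so it serves both as a legitimate instance of $\forallcut K{\in}\dot J$ and as the source of the implication $\Box^K C_0 \to \Box^L C_0$), and one must match Pudl\'ak's cut $L$ to $\Box_T C_0$ viewed as an $\exists\Sigma_1^b$ formula, so that the right-hand conclusion really is the full $\Box C_0$ and not some $\Box^I C_0$.
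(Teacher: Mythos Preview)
Your proof is correct and follows essentially the same approach as the paper's: both apply Pudl\'ak's Lemma to $A_0 \rhd B_0$ to obtain a cut $L$ with $A_0 \wedge \Box^L C_0 \rhd B_0 \wedge \Box C_0$, instantiate the hypothesis at a suitable sub-cut of $J$, and use the formalized Henkin construction to pass from $\Diamond^J(\cdot)$ to an interpretation. The paper compresses your steps into the single claim that the hypothesis yields $\forallcut K\,\big(E_1 \rhd A_0 \wedge \Box^{\dot K} C_0\big)$ and then plugs in Pudl\'ak's cut, whereas you make the intersection $K = J \cap L$ and the monotonicity $\Box^K C_0 \to \Box^L C_0$ explicit; the underlying argument is the same.
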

\begin{proof}
Reasoning in $T$ we get from $\existscut J \ \Box\, \big ( E_1 \to \forallcut\,  K{\in}\dot J\ \Diamond^{\dot J} (A_0 \wedge \Box^{\dot K}C_0)\big)$ that $\forallcut K\ \big( E_1 \rhd A_0 \wedge \Box^{\dot K}C_0\big)$. We combine this with $A_0\rhd B_0 \to \existscut J \ \big( A_0 \wedge \Box^{\dot J}C_0 \rhd B_0 \wedge \Box C_0\big)$ to conclude $E_1 \rhd B_0 \wedge \Box C_0$.
\end{proof}
With these technical lemmas we can prove soundness of $\rr_1$.
\begin{lemma}
Let $T$ be any theory extending \sonetwo. We have that for any arithmetical sentences $E_1, A_1, B_1, A_0, B_0$ and $C_0$ that
\[
\begin{array}{lll}
T & \vdash  & A_1 \rhd B_1\wedge (A_0\rhd B_0) \to \\
 & &\ \ \ \ \neg (A_1 \rhd \neg C_1)\wedge (E_1 \rhd \neg (A_0 \rhd \neg C_0))\ \rhd \\
 & & \ \ \ \ \ \ \ \ \ \ \ \ \ \ \ \ \  B_1 \wedge (A_0\rhd B_0) \wedge \Box C_1 \wedge (E_1 \rhd A_0)\wedge (E_1\rhd B_0 \wedge \Box C_0).\\
 \end{array}
\]
\end{lemma}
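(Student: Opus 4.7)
The plan is to build a single Pudl\'ak/Henkin-style interpretation witnessing the conclusion, and then verify it automatically also witnesses the two extra $\rhd$-conjuncts on the right, using that an $\exists\Sigma_1^b$ consequence of the source is preserved through cuts.

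Reasoning in $T$, assume the three antecedents. By Lemma \ref{theorem:TheYformulasProveSigmaFormulas} applied to the third antecedent $E_1\rhd \neg(A_0\rhd \neg C_0)$, the sentence
\[
\sigma\ :=\ \existscut J\ \Box\,\bigl(E_1\to \forallcut K{\in}\dot J\ \Diamond^{\dot J}(A_0\wedge \Box^{\dot K}C_0)\bigr)
\]
is derivable; note that $\sigma$ is provably equivalent to an $\exists\Sigma_1^b$ formula. The strategic usefulness of $\sigma$ is twofold: by Lemma \ref{theorem:EOneInterpretsAZero} it yields $E_1\rhd A_0$, and by Lemma \ref{theorem:EOneInterpretsZZero} together with $A_0\rhd B_0$ it yields $E_1\rhd B_0\wedge \Box C_0$.

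Next, Lemma \ref{theorem:negatedRhdYieldsSmallWitness} applied to the second antecedent gives $\forallcut K\ \Diamond(A_1\wedge \Box^{\dot K}C_1)$. Applying Lemma \ref{theorem:KeepWitnessSmall} to the first antecedent $A_1\rhd B_1\wedge (A_0\rhd B_0)$ with parameter $\gamma:= C_1$ produces a cut $K_1$ and an interpretation so that $A_1\wedge \Box^{K_1}C_1 \rhd B_1\wedge (A_0\rhd B_0)\wedge \Box C_1$. Plugging $K_1$ for $K$ above and chaining via $J5$ (using $\Diamond X\rhd X$) yields an interpretation $j$ witnessing
\[
\neg(A_1\rhd \neg C_1)\ \rhd\ B_1\wedge (A_0\rhd B_0)\wedge \Box C_1.
\]
By construction $j$ is a definable-cut interpretation, so it carries $\exists\Sigma_1^b$-truths of the source inside its cut. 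In particular, since the source proves $\sigma$ and $\sigma\in \exists\Sigma_1^b$, Lemma \ref{theorem:K4WithCuts}(2) ensures that the source also proves $\sigma^j$.

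The final step is to show that this same $j$ already witnesses the full conclusion. Since Lemmas \ref{theorem:EOneInterpretsAZero} and \ref{theorem:EOneInterpretsZZero} are theorems of $T$, their $j$-translations are provable in the source (as $j$ interprets $T$). Applying them to $\sigma^j$ and to $(A_0\rhd B_0)^j$ (the latter being already in the target of $j$), the source proves $(E_1\rhd A_0)^j$ and $(E_1\rhd B_0\wedge \Box C_0)^j$, which, combined with what $j$ already interprets, exhibits $j$ as an interpretation witnessing the full consequent.

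The main obstacle will be to formalize rigorously the preservation step ``the source proves $\sigma^j$.'' Although $\sigma$ is $\exists\Sigma_1^b$, it itself contains an existential cut quantifier together with the inner cut quantifier $\forallcut K{\in} \dot J$ and the relativized box $\Box^{\dot J}$. Verifying that all of these survive the relativization to $j$'s cut requires the standard but delicate observation that any cut can be shortened to a sub-cut of $j$, together with provable $\Sigma_1$-completeness inside cuts as provided by the outside-big-inside-small principle in Lemma \ref{theorem:K4WithCuts}.
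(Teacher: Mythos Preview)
Your outline has the right ingredients but contains a real gap at the step ``the source proves $\sigma^j$.''  The interpretation $j$ you build is \emph{not} a definable-cut interpretation: it is the composite of the implication $\neg(A_1\rhd\neg C_1)\to\Diamond(A_1\wedge\Box^{K_1}C_1)$, the Henkin interpretation witnessing $\Diamond X\rhd X$, and the Pudl\'ak interpretation.  The Henkin step builds a term model and is in no sense a relativization to a cut of the source, so there is no such thing as ``$j$'s cut,'' and Lemma~\ref{theorem:K4WithCuts}(2) gives you nothing about $\sigma^j$.  Your final paragraph senses the difficulty but misdiagnoses it: the problem is not that $\sigma$ has internal cut quantifiers, it is that $j$ is not a cut at all.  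Moreover, even with the source taken as $T+\neg(A_1\rhd\neg C_1)$ alone (as you wrote), the source does not prove $\sigma$, since $\sigma$ comes from the \emph{other} conjunct $E_1\rhd\neg(A_0\rhd\neg C_0)$.

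The paper's proof sidesteps this entirely by never forming $j$ first and chasing $\sigma$ through it.  Instead it packages $\sigma$ together with $\Box C_1$ as a single $\exists\Sigma_1^b$ sentence and pushes that package through in one go: from $\sigma$ and outside-big-inside-small one gets $\forallcut K\,\Diamond\bigl(A_1\wedge\Box^{\dot K}C_1\wedge\sigma^{\dot K}\bigr)$, and then Pudl\'ak's lemma applied to $A_1\rhd B_1\wedge(A_0\rhd B_0)$ with the $\Sigma_1$ cargo $\Box C_1\wedge\sigma$ yields
\[
\neg(A_1\rhd\neg C_1)\wedge(E_1\rhd\neg(A_0\rhd\neg C_0))\ \rhd\ B_1\wedge(A_0\rhd B_0)\wedge\Box C_1\wedge\sigma.
\]
Now $\sigma$ and $A_0\rhd B_0$ sit in the \emph{target} theory itself, so Lemmas~\ref{theorem:EOneInterpretsAZero} and~\ref{theorem:EOneInterpretsZZero} apply there directly (via $J1$), with no $j$-translation needed.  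If you try to repair your argument you will find yourself doing exactly this: the only way to get $\sigma$ to survive the Henkin step is to load $\sigma^{K_1}$ into the $\Diamond$ beforehand, which is precisely the paper's move.
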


\begin{proof}
We reason in $T$. Using our new technical lemma and Lemma \ref{theorem:negatedRhdYieldsSmallWitness} we get
\[
\begin{array}{lr}
\neg (A_1 \rhd \neg C_1)\wedge (E_1 \rhd \neg (A_0 \rhd \neg C_0)) & \to \\
\forallcut K \Diamond (A_1\wedge \Box^{\dot K} C_1) \ \wedge \ \existscut J \ \Box\, \big ( E_1 \to \forallcut\,  L{\in}\dot J\ \Diamond^{\dot J} (A_0 \wedge \Box^{\dot L}C_0)\big) & \to \\
\forallcut K \Diamond \Big(A_1\wedge \Box^{\dot K} C_1 \ \wedge \ \existscut \, J{\in}\dot K \ \Box^{\dot K}\, \big ( E_1 \to \forallcut\,  L{\in}\dot J\ \Diamond^{\dot J} (A_0 \wedge \Box^{\dot L}C_0)\big) \Big). & 
\end{array}
\]
The last step is due to the principle of outside-big inside-small (Lemma \ref{Lemma:OutsideBIGInsidesmall}) and allows us to conclude
\begin{align*}
\forallcut K \ \Big( \ \ \neg (A_1 & \rhd \neg C_1)\wedge  (E_1 \rhd \neg (A_0 \rhd \neg C_0)) \ \ \rhd \\
 & A_1\wedge \Box^{\dot K} C_1 \ \wedge \ \existscut \, J{\in}\dot K \ \Box^{\dot K}\, \big ( E_1 \to \forallcut\,  L{\in}\dot J\ \Diamond^{\dot J} (A_0 \wedge \Box^{\dot L}C_0)\big) \ \ \Big). \label{bla} \\ 
\end{align*}
This can be combined with the fact that  
\[
A_1 \rhd B_1\wedge (A_0\rhd B_0) \to \existscut K \ \big( A_1 \wedge \sigma^{\dot K}\rhd B_1\wedge \sigma \wedge (A_0\rhd B_0) \ \big)
\]
for this particular $K$ holds for any $\sigma \in \Sigma_1$ to conclude
\begin{align*}
A_1 \rhd B_1\wedge & (A_0\rhd B_0) \to  \neg (A_1  \rhd \neg C_1)\wedge  (E_1 \rhd \neg (A_0 \rhd \neg C_0)) \ \ \rhd \\
 &  B_1\wedge (A_0\rhd B_0)\wedge \Box C_1 \ \wedge \ \existscut  J \ \Box\, \big ( E_1 \to \forallcut\,  L{\in}\dot J\ \Diamond^{\dot J} (A_0 \wedge \Box^{\dot L}C_0)\big).
\end{align*}
(Note that $\Box^{\dot K} C_1 \, \wedge \, \existscut \, J{\in}\dot K \ \Box^{\dot K}\, \big ( E_1 \to \forallcut\,  L{\in}\dot J\ \Diamond^{\dot J} (A_0 \wedge \Box^{\dot L}C_0)\big)$ is equivalent to an $\exists \Sigma_1^b$ sentence relativized to $\dot K$.) Our technical lemmas \ref{theorem:EOneInterpretsAZero} and \ref{theorem:EOneInterpretsZZero} tell us that 
\begin{align*}
(A_0\rhd B_0)\wedge   \existscut  J \ \Box\, \big ( E_1 \to \forallcut\,  L{\in}\dot J\ \Diamond^{\dot J} (A_0 \wedge \Box^{\dot L}C_0)\big) \ \ \to \\
\ \ \ \ (E_1 \rhd A_0) \wedge (E_1 \rhd B_0\wedge \Box C_0)
\end{align*} 
and we are done.
\end{proof}
The soundness proofs for $\rr_k$ is essentially not much different. We shall indicate where the soundness proof for $\rr_1$ needs to be modified and begin with modifications of the technical lemmas. 

However, first we must inductively define a series of important formulas. In our definition we work with more variables than actually needed. However, we have chosen to do so since our variables can be interpreted as numbers or as formulas and we wish to avoid expressions like $\forallcut J \ \Box\, \exists J{\in} \dot J \ \phi$.
\begin{align*}
\mathcal H_1 & := \existscut J_1\ \Box \big( E_1 \to \forallcut K_1 {\in}\dot J_1\ \Diamond^{\dot J_1} (A_0 \wedge \Box^{\dot K_1}C_0)\big);\\
\mathcal H_{k+1} & := \existscut J_{k+1}\ \Box \big( E_{k+1} \to \forallcut K_{k+1}{\in}\dot J_{k+1} \ \Diamond^{\dot J_{k+1}} (A_k \wedge \Box^{\dot K_{k+1}}C_k \wedge \mathcal H_k^{\dot K_{k+1}})\big).\\
\end{align*}
It is easy to see that for each $k>0$ the formula $\mathcal H_k$ is an $\exists \Sigma_1^b$ formula. The next lemmas show us that $\mathcal H_{k+1}$ are $\exists \Sigma_1^b$ consequences of the $\Sigma_3$ statements $E_{k+1}\rhd \yy_k$ which contain all the essential information for proving soundness. 
First we prove a simple modification of Lemma \ref{theorem:TheYformulasProveSigmaFormulas}.

\begin{lemma}\label{theorem:TheYformulasProveSigmaFormulasGeneralized}
Let $T$ be any theory extending \sonetwo. We have that for any arithmetical sentences $E_1, A_0, B_0$ and $C_0$  and for any $\exists \Sigma^b_1$ formula $\sigma$ that
\[
T\vdash E_1 \rhd \neg (A_0 \rhd \neg C_0) \wedge \sigma \to \existscut J \ \Box\, \big ( E_1 \to \forallcut\,  K{\in}\dot J\ \Diamond^{\dot J} (A_0 \wedge \Box^{\dot K}C_0 \wedge \sigma^{\dot K})\big).
\]
\end{lemma}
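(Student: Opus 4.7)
The plan is to mimic the proof of Lemma \ref{theorem:TheYformulasProveSigmaFormulas} and to carry the extra conjunct $\sigma$ through the argument, folding it into the diamond on the right-hand side via the outside-big inside-small principle (Lemma \ref{theorem:K4WithCuts}(2)) just before the final Pudl\'ak-style shrinking step.

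First I would reason in $T$ and assume $E_1 \rhd (\neg(A_0 \rhd \neg C_0) \wedge \sigma)$. Lemma \ref{theorem:negatedRhdYieldsSmallWitness} gives $\neg(A_0 \rhd \neg C_0) \to \forallcut K\ \Diamond(A_0 \wedge \Box^{\dot K}C_0)$, so by right-monotonicity of $\rhd$ I obtain $E_1 \rhd \forallcut K\ \Diamond(A_0 \wedge \Box^{\dot K}C_0) \wedge \sigma$. Since $\sigma$ is $\exists\Sigma^b_1$, Lemma \ref{theorem:K4WithCuts}(2) yields $T\vdash \sigma \to \forallcut K\ \Box\, \sigma^{\dot K}$; combining this with the previous conjunct via the basic modal fact $\Diamond\varphi \wedge \Box\psi \to \Diamond(\varphi \wedge \psi)$ allows me to pull $\sigma^{\dot K}$ inside the diamond and conclude
\[
E_1 \rhd \forallcut K\ \Diamond(A_0 \wedge \Box^{\dot K}C_0 \wedge \sigma^{\dot K}).
\]

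Next I would apply Pudl\'ak's lemma (Lemma \ref{theorem:PudlaksPrinciple}) exactly as in the proof of Lemma \ref{theorem:TheYformulasProveSigmaFormulas}. Since relativization to a cut preserves the class $\exists\Sigma^b_1$, the negation $\existscut K\ \Box\,\neg(A_0 \wedge \Box^{\dot K}C_0 \wedge \sigma^{\dot K})$ of the right-hand side is still $\exists\Sigma^b_1$, so Pudl\'ak furnishes a cut $J$ with
\[
E_1 \wedge \existscut K{\in}\dot J\ \Box^{\dot J}\neg(A_0 \wedge \Box^{\dot K}C_0 \wedge \sigma^{\dot K}) \rhd \bot,
\]
which is provably the same as the desired $\existscut J\ \Box\bigl(E_1 \to \forallcut K{\in}\dot J\ \Diamond^{\dot J}(A_0 \wedge \Box^{\dot K}C_0 \wedge \sigma^{\dot K})\bigr)$.

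I expect no genuine obstacle beyond routine complexity bookkeeping: one needs to confirm that inserting $\sigma^{\dot K}$ under the diamond keeps the whole right-hand side in the class to which Pudl\'ak's lemma applies, but since $\sigma \in \exists\Sigma^b_1$ and relativization to a definable cut preserves this class, the argument is structurally identical to the proof of Lemma \ref{theorem:TheYformulasProveSigmaFormulas}, with $\sigma^{\dot K}$ riding along throughout.
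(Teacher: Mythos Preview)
Your proposal is correct and follows essentially the same route as the paper: derive $\neg(A_0\rhd\neg C_0)\wedge\sigma \to \forallcut K\,\Diamond(A_0\wedge\Box^{\dot K}C_0\wedge\sigma^{\dot K})$ using Lemma~\ref{theorem:negatedRhdYieldsSmallWitness} together with the outside-big inside-small principle, and then repeat the Pudl\'ak step from Lemma~\ref{theorem:TheYformulasProveSigmaFormulas} verbatim. The paper's write-up is terser---it compresses your steps 2--5 into a two-line implication chain and leaves the final Pudl\'ak application as ``as before''---but the underlying argument is identical.
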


\begin{proof}
We repeat the proof of Lemma \ref{theorem:TheYformulasProveSigmaFormulas}. Note that, by our reading conventions the antecedent $E_1 \rhd \neg (A_0 \rhd \neg C_0) \wedge \sigma$ should be read as $E_1 \rhd \big( \neg (A_0 \rhd \neg C_0) \wedge \sigma\big)$. We reason in $T$ and see that 
\[
\begin{array}{lll}
\neg (A_0 \rhd \neg C_0) \wedge \sigma  &\to& \forallcut K \Diamond (A_0\wedge \Box^{\dot K} C_0) \wedge \sigma\\
&\to & \forallcut K \Diamond (A_0\wedge \Box^{\dot K} C_0 \wedge \sigma^{\dot K}). 
\end{array}
\]
As before, the latter implies $\existscut J \ \Box\, \big ( E_1 \to \forallcut\,  K{\in}\dot J\ \Diamond^{\dot J} (A_0 \wedge \Box^{\dot K}C_0 \wedge \sigma^{\dot K})\big)$.
\end{proof}

With this lemma we see that the $\mathcal H_{k+1}$ are an $\exists \Sigma_1^b$ encoding of information present in $E_{k+1}\rhd \yy_k$:

\begin{lemma}\label{theorem:HisImplied}
Let $T$ be a theory containing \sonetwo and let the formulas $E_i, A_i$, and $C_i$ be arbitrary. For any number $k$ we have that 
\[
T\vdash E_{k+1}\rhd \yy_k \ \to \ \mathcal H_{k+1}.
\]
\begin{proof}
By an external induction on $k$. For $k=0$ this is simply Lemma \ref{theorem:TheYformulasProveSigmaFormulas}. For the inductive case we reason in $T$ and see that $E_{k+2} \rhd \yy_{k+1} \equiv E_{k+2} \rhd \neg (A_{k+1} \rhd \neg C_{k+1})\wedge (E_{k+1}\rhd \yy_k)$. By the inductive hypothesis we have that $E_{k+1}\rhd \yy_k \to \mathcal H_{k+1}$ so that 
$E_{k+2} \rhd \yy_{k+1} \to E_{k+2}\rhd \neg (A_{k+1} \rhd \neg C_{k+1}) \wedge \mathcal H_{k+1}$. Since $\mathcal H_{k+1}$ is equivalent to an $\exists\Sigma_1^b$ formula, by Lemma \ref{theorem:TheYformulasProveSigmaFormulasGeneralized} we see that 
\[
E_{k+2}\rhd \neg (A_{k+1} \rhd \neg C_{k+1}) \wedge \mathcal H_{k+1} \to \mathcal H_{k+2}
\]
as was to be shown.
\end{proof}
\end{lemma}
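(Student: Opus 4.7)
The plan is to proceed by external induction on $k$, exactly mirroring the inductive structure of the definitions of $\yy_k$ and $\mathcal H_{k+1}$. The key observation that makes the induction go through is that each $\mathcal H_j$ is (provably equivalent to) an $\exists\Sigma_1^b$ formula, so we are allowed to plug it into the $\sigma$-slot of the generalized technical lemma (Lemma~\ref{theorem:TheYformulasProveSigmaFormulasGeneralized}).

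For the base case $k=0$, we have $\yy_0 = \neg(A_0\rhd \neg C_0)$ and $\mathcal H_1$ is exactly the conclusion of Lemma~\ref{theorem:TheYformulasProveSigmaFormulas} applied to $E_1$, so there is nothing to prove beyond citing that lemma.

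For the inductive step, I assume as induction hypothesis that $T\vdash E_{k+1}\rhd \yy_k \to \mathcal H_{k+1}$ and reason in $T$. Unfolding the definition, $\yy_{k+1} \equiv \neg(A_{k+1}\rhd \neg C_{k+1}) \wedge (E_{k+1}\rhd \yy_k)$. Using the induction hypothesis under a $\Box$ (or equivalently under the $\rhd$, by necessitation and $\sf J1$), I obtain
\[
E_{k+2}\rhd \yy_{k+1} \ \to \ E_{k+2}\rhd \big(\neg(A_{k+1}\rhd \neg C_{k+1}) \wedge \mathcal H_{k+1}\big).
\]
Since $\mathcal H_{k+1}$ is (provably equivalent to) an $\exists\Sigma_1^b$ formula, I can now apply Lemma~\ref{theorem:TheYformulasProveSigmaFormulasGeneralized} with $\sigma := \mathcal H_{k+1}$, $E_1$ replaced by $E_{k+2}$, and the indices $(A_0,B_0,C_0)$ replaced by $(A_{k+1},B_{k+1},C_{k+1})$. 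This delivers precisely
\[
\existscut J\ \Box\big(E_{k+2} \to \forallcut K{\in}\dot J\ \Diamond^{\dot J}(A_{k+1} \wedge \Box^{\dot K}C_{k+1} \wedge \mathcal H_{k+1}^{\dot K})\big),
\]
which, after renaming $J$ to $J_{k+2}$ and $K$ to $K_{k+2}$, is the definition of $\mathcal H_{k+2}$.

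The main thing to verify carefully is the $\exists\Sigma_1^b$ status of $\mathcal H_{k+1}$, which is needed to legitimately substitute it for $\sigma$. This follows by a separate external induction: $\mathcal H_1$ is visibly $\exists\Sigma_1^b$ because $\Box$, $\Diamond^{\dot J}$, $\Box^{\dot K}$ and bounded quantification over a cut are all within $\exists\Sigma_1^b$ (as remarked in the preliminaries about $\existscut J\,\psi$ with $\psi\in\exists\Sigma_1^b$); and assuming inductively that $\mathcal H_k$ is $\exists\Sigma_1^b$, the relativization $\mathcal H_k^{\dot K_{k+1}}$ remains $\exists\Sigma_1^b$, whence so does $\mathcal H_{k+1}$. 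Beyond this bookkeeping, the argument is a straightforward unrolling of the definitions, and I do not anticipate a serious obstacle.
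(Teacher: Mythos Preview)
Your proof is correct and follows essentially the same approach as the paper: external induction on $k$, with the base case handled by Lemma~\ref{theorem:TheYformulasProveSigmaFormulas} and the inductive step by pushing the induction hypothesis under the $\rhd$ and then invoking Lemma~\ref{theorem:TheYformulasProveSigmaFormulasGeneralized} with $\sigma:=\mathcal H_{k+1}$. Your extra remarks on why each $\mathcal H_j$ is $\exists\Sigma_1^b$ make explicit what the paper only asserts in passing.
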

Moreover, the $\mathcal H_{k+1}$ formulas contain all the information to get the induction going as shown by the following lemma.

\begin{lemma}\label{theorem:InductiveStepInRRsoundnessProof}
Let $T$ be a theory containing \sonetwo and let the formulas $E_i, A_i, B_i$, and $C_i$ be arbitrary. For any number $k$ we have that 
\[
T\vdash (\xx_k) \, \wedge \, \mathcal H_{k+1} \ \to \ E_{k+1} \rhd \zz_k . 
\]
\end{lemma}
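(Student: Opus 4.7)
The plan is to prove this by external induction on $k$, using the previous technical lemmas as the main engine. The base case $k=0$ asks for $(A_0 \rhd B_0) \wedge \mathcal H_1 \to E_1 \rhd (B_0 \wedge \Box C_0)$, which is exactly Lemma \ref{theorem:EOneInterpretsZZero} (since $\mathcal H_1$ matches the antecedent there and $\zz_0 = B_0 \wedge \Box C_0$). For the inductive step, assume the claim for $k$ and prove it for $k+1$. Reasoning in $T$, suppose $\xx_{k+1} \wedge \mathcal H_{k+2}$, i.e.\ we have $A_{k+1}\rhd (B_{k+1} \wedge \xx_k)$ together with
\[
\existscut J\, \Box\bigl(E_{k+2} \to \forallcut K{\in}\dot J\ \Diamond^{\dot J}(A_{k+1} \wedge \Box^{\dot K}C_{k+1} \wedge \mathcal H_{k+1}^{\dot K})\bigr).
\]

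Along the lines of Lemma \ref{theorem:EOneInterpretsAZero}, this second conjunct immediately yields $E_{k+2} \rhd (A_{k+1} \wedge \Box^{\dot K}C_{k+1} \wedge \mathcal H_{k+1}^{\dot K})$ for every cut $K$ in $J$. Now I apply the ``keep witness small'' principle (Lemma \ref{theorem:KeepWitnessSmall}) to the interpretation $A_{k+1} \rhd B_{k+1} \wedge \xx_k$ together with the $\exists \Sigma_1^b$ formula $\gamma := \Box C_{k+1} \wedge \mathcal H_{k+1}$ (here I use that $\mathcal H_{k+1}$ is $\exists\Sigma_1^b$, as remarked after its definition). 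This supplies a cut $K$ sitting inside $J$ such that $(A_{k+1} \wedge \Box^{\dot K}C_{k+1} \wedge \mathcal H_{k+1}^{\dot K}) \rhd (B_{k+1} \wedge \xx_k \wedge \Box C_{k+1} \wedge \mathcal H_{k+1})$. Composing with what $\mathcal H_{k+2}$ gave us,
\[
E_{k+2} \rhd B_{k+1} \wedge \xx_k \wedge \Box C_{k+1} \wedge \mathcal H_{k+1}.
\]

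Next, I unpack the right-hand side using two auxiliary implications. First, the generalization of Lemma \ref{theorem:EOneInterpretsAZero} one level up tells us $\mathcal H_{k+1} \to (E_{k+1} \rhd A_k)$, and $(E_{k+1} \rhd A_k)$ can be propagated through the outer $\rhd$ by $\principle{J1}$ (or just by pushing the implication under the $\rhd$). Second, the induction hypothesis gives $\xx_k \wedge \mathcal H_{k+1} \to (E_{k+1} \rhd \zz_k)$. Since both new conjuncts follow from what we already have on the right, we conclude
\[
E_{k+2} \rhd B_{k+1} \wedge \xx_k \wedge \Box C_{k+1} \wedge (E_{k+1}\rhd A_k) \wedge (E_{k+1}\rhd \zz_k),
\]
which is precisely $E_{k+2} \rhd \zz_{k+1}$.

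The main obstacle is the bookkeeping in the middle step: one has to verify carefully that $\Box^{\dot K} C_{k+1} \wedge \mathcal H_{k+1}^{\dot K}$ really is (equivalent to) an $\exists\Sigma_1^b$-sentence, so that Lemma \ref{theorem:KeepWitnessSmall} applies and the cut $K$ delivered by that lemma indeed lies inside $J$ (with $\gamma$ evaluated on the outer cut instead of $K$). Once that matching of cuts is nailed down, the rest of the induction is a purely modal/compositional manipulation using $\principle{J1}$, $\principle{J2}$, and the derived corollaries already at our disposal.
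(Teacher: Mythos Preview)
Your proof is correct and follows essentially the same route as the paper: induction on $k$ with Lemma~\ref{theorem:EOneInterpretsZZero} as base case, extracting from $\mathcal H_{k+2}$ an interpretation $E_{k+2}\rhd A_{k+1}\wedge\Box^{\dot K}C_{k+1}\wedge\mathcal H_{k+1}^{\dot K}$ for each cut $K$, composing with the Pudl\'ak cut coming from $\xx_{k+1}$ to reach $E_{k+2}\rhd B_{k+1}\wedge(\xx_k)\wedge\Box C_{k+1}\wedge\mathcal H_{k+1}$, and then recovering the missing conjuncts $E_{k+1}\rhd A_k$ and $E_{k+1}\rhd\zz_k$ from $\mathcal H_{k+1}$ and the induction hypothesis respectively.

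Two small remarks. First, the lemma you want in the middle step is Pudl\'ak's Lemma~\ref{theorem:PudlaksPrinciple} rather than Lemma~\ref{theorem:KeepWitnessSmall}: the latter is stated specifically for formulas of the shape $\Box^J\gamma$, whereas you need to transport the $\exists\Sigma_1^b$ sentence $\Box C_{k+1}\wedge\mathcal H_{k+1}$ as a whole, i.e.\ pass from $\sigma^{\dot K}$ to $\sigma$. Second, the cut-matching you flag as the ``main obstacle'' is handled in the paper by first pulling the $\forallcut K$ outside the box in $\mathcal H_{k+2}$ (via outside-big-inside-small the code of any externally given cut lies in $\dot J$ under $\Box$), so the quantifier becomes unrestricted and one can simply instantiate it with the Pudl\'ak cut $\overline K$; no containment condition ``$K$ inside $J$'' needs to be checked.
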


\begin{proof}
By induction on $k$ where the case $k=0$ is just lemma \ref{theorem:EOneInterpretsZZero}. For the inductive case, we reason in $T$ and assume $(\xx_{k+1}) \, \wedge \, \mathcal H_{k+2}$.

From the definition of $\mathcal H_{k+2}$ we get 
\[
\existscut J_{k+2}\ \Box \big( E_{k+2} \to \forallcut K_{k+2}{\in}\dot J_{k+2} \ \Diamond^{\dot J_{k+2}} (A_{k+1} \wedge \Box^{\dot K_{k+2}}C_{k+1} \wedge \mathcal H_{k+1}^{\dot K_{k+2}})\big)
\] 
\[
\mbox{so that }\existscut J_{k+2}\, \forallcut K_{k+2}\ \Box \big( E_{k+2} \to  \Diamond^{\dot J_{k+2}} (A_{k+1} \wedge \Box^{\dot K_{k+2}}C_{k+1} \wedge \mathcal H_{k+1}^{\dot K_{k+2}})\big)
\] 
whence
\begin{equation}\label{equation:InductiveStepInRRsoundnessProof}
\forallcut K_{k+2}\ \big( E_{k+2} \rhd A_{k+1} \wedge \Box^{\dot K_{k+2}}C_{k+1} \wedge \mathcal H_{k+1}^{\dot K_{k+2}} \, \big).
\end{equation}

From $\xx_{k+1}$ --which is by definition equal to $A_{k+1}\rhd B_{k+1} \wedge (\xx_k)$-- we find via Pudl\'ak's lemma, Lemma \ref{theorem:PudlaksPrinciple}, a specific cut $\overline K_{k+2}$ such that for any formula $\sigma$ in $\Sigma_1$ we obtain $A_{k+1} \wedge \sigma^{\overline K_{k+2}}\rhd B_{k+1} \wedge (\xx_k) \wedge \sigma$. We can plug in this cut $\overline K_{k+2}$ to \eqref{equation:InductiveStepInRRsoundnessProof} to obtain via transitivity of $\rhd$ that 
\[
E_{k+2} \rhd B_{k+1} \wedge (\xx_k) \wedge \Box C_{k+1} \wedge \mathcal H_{k+1}.
\]
We are almost done but $B_{k+1} \wedge (\xx_k) \wedge \Box C_{k+1} \wedge \mathcal H_{k+1}$ is not quite equal to $\zz_{k+1}$ as was needed. The missing conjuncts are $E_{k+1} \rhd A_k$ and $E_{k+1} \rhd \zz_k$. The first is easily seen to follow from $\mathcal H_{k+1}$ and the second follows from the inductive hypothesis applied to $(\xx_k) \wedge \mathcal H_{k+1}$.
\end{proof}
We are now ready to prove Theorem \ref{theorem:hierarchyoneissound} that the whole hierarchy is arithmetically sound.

\begin{theorem}
Let $T$ be a theory containing \sonetwo and let $A_i,B_i,C_i$ and $E_i$ be arbitrary arithmetical formulas. We have for each number $k$ that 
\[
T\vdash \rr_k \ \ \mbox{id est }\ \ T\vdash \xx_k \to \yy_k \rhd \zz_k.
\]
\end{theorem}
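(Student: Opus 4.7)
The plan is to reduce the theorem to the two technical lemmas just established, Lemmas~\ref{theorem:HisImplied} and~\ref{theorem:InductiveStepInRRsoundnessProof}, combined with the small-witness machinery. Reasoning in $T$, we assume $\xx_k$ and aim at $\yy_k \rhd \zz_k$; no outer induction on $k$ is needed, since both technical lemmas have already been proved inductively.

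First, unpack $\yy_k = \neg(A_k \rhd \neg C_k) \wedge (E_k \rhd \yy_{k-1})$. Applied at index $k-1$, Lemma~\ref{theorem:HisImplied} turns the second conjunct into $\mathcal H_k$, while Lemma~\ref{theorem:negatedRhdYieldsSmallWitness} turns the first into $\forallcut K\, \Diamond(A_k \wedge \Box^{\dot K}C_k)$. Since $\mathcal H_k$ is equivalent to an $\exists \Sigma_1^b$ formula, outside-big inside-small (Lemma~\ref{theorem:K4WithCuts}) allows us to inject it inside the $\Diamond$, giving $\yy_k \to \forallcut K\, \Diamond(A_k \wedge \Box^{\dot K}C_k \wedge \mathcal H_k^{\dot K})$. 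Necessitation together with ${\sf J1}$ and ${\sf J5}$ (plus transitivity of $\rhd$) then upgrade each $\Diamond$ to an $\rhd$, yielding for every cut $K$ the statement $\yy_k \rhd A_k \wedge \Box^{\dot K}C_k \wedge \mathcal H_k^{\dot K}$.

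Next we exploit $\xx_k = A_k \rhd B_k \wedge (\xx_{k-1})$. Pudl\'ak's lemma (Lemma~\ref{theorem:PudlaksPrinciple}) produces a particular cut $\overline K$ satisfying $A_k \wedge \sigma^{\overline K}\rhd B_k \wedge (\xx_{k-1}) \wedge \sigma$ for every $\sigma \in \exists \Sigma_1^b$. Instantiating $\sigma := \Box C_k \wedge \mathcal H_k$ and composing with the previous step at $K = \overline K$ via transitivity of $\rhd$ gives
\[
\yy_k \rhd B_k \wedge (\xx_{k-1}) \wedge \Box C_k \wedge \mathcal H_k.
\]

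To close, observe that $\mathcal H_k \to (E_k \rhd A_{k-1})$ follows directly from the form of $\mathcal H_k$ (the argument of Lemma~\ref{theorem:EOneInterpretsAZero}), and Lemma~\ref{theorem:InductiveStepInRRsoundnessProof} at $k-1$ gives $(\xx_{k-1}) \wedge \mathcal H_k \to (E_k \rhd \zz_{k-1})$. Together these show that in $T$ we have $B_k \wedge (\xx_{k-1}) \wedge \Box C_k \wedge \mathcal H_k \to \zz_k$, and one further application of necessitation, ${\sf J1}$ and transitivity of $\rhd$ delivers $\yy_k \rhd \zz_k$, as required. The only genuine subtlety is tracking which formulas are in $\exists\Sigma_1^b$ so that the outside-big inside-small and Pudl\'ak moves remain legal; the base case $k=0$ fits the same scheme under the conventions $\xx_{-1} \equiv \zz_{-1} \equiv \top$, with the $\mathcal H_k$-apparatus simply dropping out.
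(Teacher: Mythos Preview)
Your proof is correct and is essentially the paper's argument: extract $\mathcal H_k$ from $\yy_k$ via Lemma~\ref{theorem:HisImplied}, push it inside the $\Diamond$ using outside-big inside-small, apply Pudl\'ak's Lemma to $\xx_k$, and close with Lemma~\ref{theorem:InductiveStepInRRsoundnessProof}. The only difference is cosmetic: the paper frames the argument as an external induction on $k$ (with $\rr_0$ as the base case), but as you observe the induction hypothesis is never actually invoked, the inductive content having already been absorbed into the two technical lemmas.
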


\begin{proof}
By an external induction on $k$ where the base case is the soundness of $\rr_0$ which has been proven in \cite{GorisJoosten:2011:ANewPrinciple}. Thus, we reason in $T$ assuming $A_{k+1}\rhd B_{k+1} \wedge (\xx_{k})$. We need to conclude that $\yy_{k+1}\rhd \zz_{k+1}$. But $\yy_{k+1}$ is nothing but $\neg (A_{k+1}\rhd \neg C_{k+1}) \wedge (E_{k+1} \rhd \yy_{k})$. By Lemma 
\ref{theorem:HisImplied} 
we know that $(E_{k+1} \rhd \yy_{k}) \to \mathcal H_{k+1}$. Using this and reasoning as before we obtain
\[
\begin{array}{lll}
\neg (A_{k+1} \rhd \neg C_{k+1}) \wedge (E_{k+1} \rhd \yy_{k})  &\to& \forallcut K \Diamond (A_{k+1}\wedge \Box^{\dot K} C_{k+1}) \wedge (E_{k+1} \rhd \yy_{k})\\
  &\to& \forallcut K \Diamond (A_{k+1}\wedge \Box^{\dot K} C_{k+1}) \wedge \mathcal H_{k+1}\\
&\to & \forallcut K \Diamond (A_{k+1}\wedge \Box^{\dot K} C_{k+1} \wedge \mathcal H_{k+1}^{\dot K}). 
\end{array}
\]
Consequently, 
\[
\forallcut K \ \big(\neg (A_{k+1} \rhd \neg C_{k+1}) \wedge (E_{k+1} \rhd \yy_{k})\rhd A_{k+1}\wedge \Box^{\dot K} C_{k+1} \wedge \mathcal H_{k+1}^{\dot K} \big).
\]
This can be combined with Pudl\'ak's Lemma on $A_{k+1}\rhd B_{k+1} \wedge (\xx_k)$ to obtain
\[
\neg (A_{k+1} \rhd \neg C_{k+1}) \wedge (E_{k+1} \rhd \yy_{k})\rhd B_{k+1} \wedge (\xx_k) \wedge \Box C_{k+1} \wedge \mathcal H_{k+1}. 
\]
It is easy to see that $\mathcal H_{k+1}$ implies $E_{k+1}\rhd A_k$. Moreover, Lemma \ref{theorem:InductiveStepInRRsoundnessProof} tells us that $(\xx_k)\wedge \mathcal H_{k+1} \to E_{k+1} \rhd \zz_k$ so that we may conclude
\[
\neg (A_{k+1} \rhd \neg C_{k+1}) \wedge (E_{k+1} \rhd \yy_{k})\rhd B_{k+1} \wedge (\xx_k) \wedge \Box C_{k+1} \wedge (E_{k+1}\rhd A_k ) \wedge (E_{k+1} \rhd \zz_k)
\]
as was to be shown.
\end{proof}

\section{A broad series of principles}

In this section we present a different series of principles. We refer to this series as the broad series since the frame-conditions  --see Figure \ref{figure:broadSeries}-- are typically represented over a broader area than the slim hierarchy as discussed above.

\subsection{A broad series}
In order to define the second series we first define a series of auxiliary formulas. For any $n\geq 1$ we define the schemata $\uu_n$ as follows.
\begin{align*}
\uu_1 &:= \Diamond\neg(D_1\rhd \neg C),\\
\uu_{n+2} &:= \Diamond((D_{n+1}\rhd D_{n+2})\wedge\uu_{n+1}).
\end{align*}
Now, for $n\geq 0$ we define the schemata $\principle R^n$ as follows.
\begin{align*}
\principle{R}^0 &:= A\rhd B\rightarrow\neg(A\rhd \neg C)\rhd B\wedge\Box C,\\
\principle{R}^{n+1} &: = A\rhd B\rightarrow\uu_{n+1}\wedge(D_{n+1}\rhd A)\rhd B\wedge\Box C.
\end{align*}

As an illustration we shall calculate the first four principles.
\[
\begin{array}{lll}
\principle{R}^0 & :=  & A \rhd B \to \neg (A \rhd \neg C) \rhd B \wedge \Box C\\
\principle{R}^1 &:=& A \rhd B \to \Diamond \neg(D_1 \rhd \neg C) \wedge (D_1 \rhd A)  \rhd B \wedge \Box  C\\
\principle{R}^2 &:=& A \rhd B \to  \Diamond\Big[ (D_1 \rhd  D_2) \wedge\Diamond\neg(D_1 \rhd \neg C)\Big] \wedge (D_2 \rhd A) \rhd B \wedge \Box  C\\

\principle{R}^3 &:=& A \rhd B \to  \Diamond \Big( (D_2\rhd D_3) \wedge \Diamond\Big[ (D_1 \rhd  D_2) \wedge\Diamond\neg(D_1 \rhd \neg C)\Big] \Big) \wedge (D_3 \rhd A) \\
 & & \ \ \ \ \ \ \ \ \ \ \ \ \ \ \ \ \ \ \ \ \ \ \ \ \ \ \ \ \ \ \ \ \ \ \ \ \ \ \ \  \ \ \ \ \ \ \ \ \ \ \ \ \ \ \ \ \ \ \ \ \ \ \ \ \ \ \ \ \ \ \ \ \ \ \ \ \ \  \rhd B \wedge \Box  C

\end{array}
\]
While the series $\principle{R}_i$ did define a hierarchy in that $\principle{R}_{i+1} \vdash \principle R_i$, we shall see that no such relation holds for the series $\principle{R}^i$.

\subsection{Frame conditions}

It is not hard to determine the frame condition for the first couple of principles in this series and in Figure \ref{figure:broadSeries} we have depicted the first three frame-conditions. In this section we shall prove that the correspondence proceeds as expected. Informally, the frame condition for $\principle{R}^n$ shall be the universal closure of
\begin{equation}\label{equation:informalFrameConditionForBroadHierarchy}
x_{n+1}Rx_n \ldots Rx_0 Ry_0 S_{x_1} y_1 \ldots S_{x_{n}}y_{n} S_{x_{n+1}}y_{n+1} R u \to y_0 S_{x_0}u.
\end{equation}

\begin{figure}[h]
\begin{picture}(105,90)(0,0)
\unitlength=1,5mm
\put(2,-20){
\gasset{ExtNL=y,NLdist=0.2,Nw=1,Nh=1}
\node[NLangle=0](X1)(0,20){$x_1$}
\node[NLangle=0](X0)(0,30){$x_0$}
\node[NLangle=0](Y0)(0,40){$y_0$}
\node[NLangle=0](Y1)(10,40){$y_1$}
\node[NLangle=0](Z)(10,50){$z$}
\drawedge(X1,X0){}
\drawedge(X0,Y0){}
\drawedge(X1,Y1){}
\drawedge(Y1,Z){}
\drawbpedge[ELside=r,ELpos=60,ELdist=0.2](Y0,45,5,Y1,225,5){$S_{x_1}$}
\drawbpedge[ELpos=30,ELdist=0.2,dash={1.5}0](Y0,90,5,Z,270,5){$S_{x_0}$}
}

\put(22,-10){
\gasset{ExtNL=y,NLdist=0.2,Nw=1,Nh=1}
\node[NLangle=0](X2)(0,10){$x_2$}
\node[NLangle=0](X1)(0,20){$x_1$}
\node[NLangle=0](X0)(0,30){$x_0$}
\node[NLangle=0](Y0)(0,40){$y_0$}
\node[NLangle=0](Y1)(10,40){$y_1$}
\node[NLangle=0](Y2)(20,40){$y_2$}
\node[NLangle=0](Z)(20,50){$z$}
\drawedge(X2,X1){}
\drawedge(X1,X0){}
\drawedge(X0,Y0){}
\drawedge(X1,Y1){}
\drawedge(X2,Y2){}
\drawedge(Y2,Z){}
\drawbpedge[ELside=r,ELpos=60,ELdist=0.2](Y0,45,5,Y1,225,5){$S_{x_1}$}
\drawbpedge[ELside=r,ELpos=60,ELdist=0.2](Y1,45,5,Y2,225,5){$S_{x_2}$}
\drawbpedge[ELpos=30,ELdist=0.2,dash={1.5}0](Y0,60,8,Z,250,8){$S_{x_0}$}
}

\put(52,0){
\gasset{ExtNL=y,NLdist=0.2,Nw=1,Nh=1}
\node[NLangle=0](X3)(0,0){$x_3$}
\node[NLangle=0](X2)(0,10){$x_2$}
\node[NLangle=0](X1)(0,20){$x_1$}
\node[NLangle=0](X0)(0,30){$x_0$}
\node[NLangle=0](Y0)(0,40){$y_0$}
\node[NLangle=0](Y1)(10,40){$y_1$}
\node[NLangle=0](Y2)(20,40){$y_2$}
\node[NLangle=0](Y3)(30,40){$y_3$}
\node[NLangle=0](Z)(30,50){$z$}
\drawedge(X3,X2){}
\drawedge(X2,X1){}
\drawedge(X1,X0){}
\drawedge(X0,Y0){}
\drawedge(X1,Y1){}
\drawedge(X2,Y2){}
\drawedge(X3,Y3){}
\drawedge(Y3,Z){}
\drawbpedge[ELside=r,ELpos=60,ELdist=0.2](Y0,45,5,Y1,225,5){$S_{x_1}$}
\drawbpedge[ELside=r,ELpos=60,ELdist=0.2](Y1,45,5,Y2,225,5){$S_{x_2}$}
\drawbpedge[ELside=r,ELpos=60,ELdist=0.2](Y2,45,5,Y3,225,5){$S_{x_3}$}
\drawbpedge[ELpos=30,ELdist=0.2,dash={1.5}0](Y0,55,13,Z,245,13){$S_{x_0}$}
}

\end{picture}
\caption{From left to right, this figure depicts the frame conditions $\mathcal F^0$ through $\mathcal F^2$ corresponding to $\principle{R}^0$ through $\principle{R}^2$. The reading convention is as always: if all the un-dashed relations are present as in the picture, then also the dashed relation should be there.}\label{figure:broadSeries}
\end{figure}
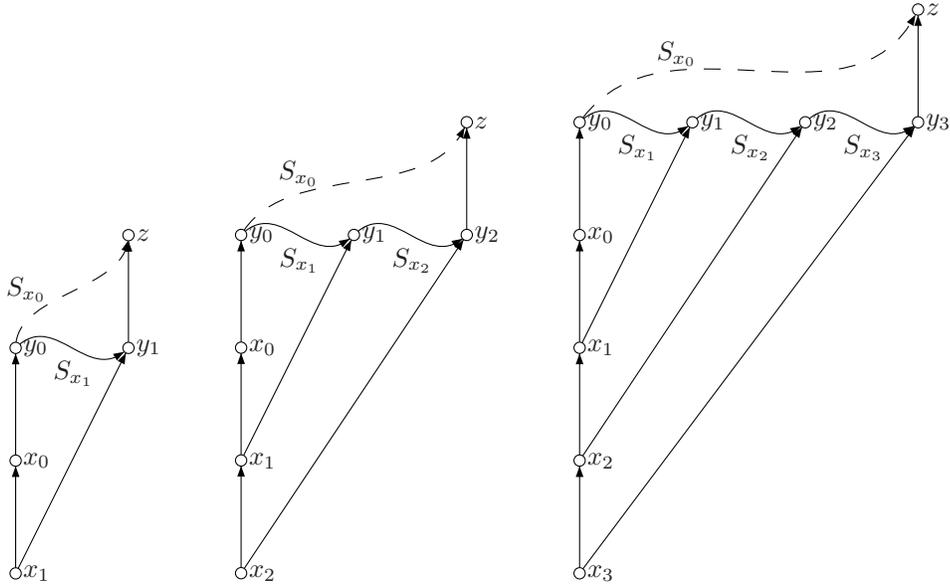

In order to make this frame condition precise and prove it, we shall first recast it in a recursive fashion. In writing \eqref{equation:informalFrameConditionForBroadHierarchy} recursively we shall use those variables that will emphasize the relation with \eqref{equation:informalFrameConditionForBroadHierarchy}. Of course, free variables can be renamed at the readers liking.

First, we start by introducing a relation $\mathcal B_n$ that captures the antecedent of \eqref{equation:informalFrameConditionForBroadHierarchy}. Note that this antecedent says that first there is a chain of points $x_i$ related by $R$, followed by a chain of points $y_i$ related by different $S$ relations. The relation $\mathcal B_n$ will be applied to the end-points of both chains where the condition on the intermediate points is imposed by recursion.

\begin{align*}
\mathcal B_0(x_1,x_0,y_0,y_1) &:= x_1Rx_1Ry_0S_{x_1}y_1,\\
\mathcal B_{n+1}(x_{n+2},x_{0},y_0,y_{n+2}) &:= \exists x_{n+1}, y_{n+1} \big(x_{n+2}Rx_{n+1}\wedge \mathcal B_n(x_{n+1},x_0,y_0,y_{n+1}) \\
 & \ \ \ \ \ \ \ \ \ \ \  \ \ \ \ \ \ \ \ \ \ \  \ \ \ \ \ \ \ \ \ \ \  \ \ \ \ \ \ \ \ \ \ \ \wedge y_{n+1}S_{x_{n+2}}y_{n+2} \big).
\end{align*}
For every $n\geq 0$ we can now define the first order frame condition $\mathcal F^n$ as follows.
\[
\mathcal F^n := \forall x_{n+1},x_0,y_0,y_{n+1} \ \big(\mathcal B_n(x_{n+1},x_0,y_0,y_{n+1})\Rightarrow \forall u\, (y_{n+1}Ru\Rightarrow y_0S_{x_0}u)\big)
.
\]

Sometimes we shall write $x_{n+1}\mathcal B_n[x_0,y_0] \, y_{n+1}$ conceiving the quaternary relation $\mathcal B_n$ as a binary relation indexed by the pair $x_0,y_0$. In what follows we let $F=\langle W,R,S\rangle$ be an arbitrary Veltman-frame. The next lemma follows from an easy induction on $n$.
\begin{lemma}\label{theorem:HSubseteqR}
For each number $n$ we have that $\mathcal B_n[x_0,y_0] \, \subseteq \, R$, that is, if $x_{n+1}\mathcal B_n[x_0,y_0]\, y_{n+1}$, then $x_{n+1}Ry_{n+1}$.
\end{lemma}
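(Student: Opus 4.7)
The plan is to prove the lemma by a direct induction on $n$, leveraging the fact that each $S_x$ is defined on $x{\uparrow}$ and hence $y S_x z$ forces both $y$ and $z$ to lie above $x$ via $R$.

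For the base case $n=0$, unfolding the definition gives $x_1 R x_0 R y_0 S_{x_1} y_1$. Since $y_0 S_{x_1} y_1$ and $S_{x_1}$ is a relation on $x_1{\uparrow}$, we immediately obtain $x_1 R y_1$, which is what we want.

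For the inductive step, suppose $x_{n+2}\, \mathcal B_{n+1}[x_0,y_0]\, y_{n+2}$. Unfolding the definition produces witnesses $x_{n+1}$ and $y_{n+1}$ with $x_{n+2} R x_{n+1}$, $x_{n+1}\, \mathcal B_n[x_0,y_0]\, y_{n+1}$, and $y_{n+1} S_{x_{n+2}} y_{n+2}$. The induction hypothesis applied to the middle conjunct gives $x_{n+1} R y_{n+1}$ (which is not even needed here), but the real work is done by observing that $y_{n+1} S_{x_{n+2}} y_{n+2}$ places $y_{n+2}$ in $x_{n+2}{\uparrow}$, so $x_{n+2} R y_{n+2}$.

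There is no genuine obstacle; the proof is routine and uses only the frame axiom that each $S_x$ is a relation on $x{\uparrow}$. The induction hypothesis is actually only needed conceptually to ensure the chain structure, but the final $R$-edge comes from the last $S$-step alone. I would present the argument in the compact two-case format above.
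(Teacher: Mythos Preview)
Your proof is correct and matches the paper's approach: the paper simply states that the lemma ``follows from an easy induction on $n$,'' and your argument spells out exactly that induction. Your observation that the induction hypothesis is not actually needed---the final $S_{x_{n+2}}$-step alone yields $x_{n+2}Ry_{n+2}$---is a nice sharpening, but the overall route is the same.
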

To prove that $F\models \mathcal F^n$ implies $F\models \principle{R}^n$ we first need a technical lemma.
\begin{lemma}\label{lemm:lemm2}
Let $w\in W$ and $\Vdash$ be a forcing relation on $F$.
If 
\[
x_{k+1}\Vdash \uu_{k+1}\wedge(D_{k+1}\rhd A),
\]
then there exist $x_0$, $y_0$ and $y_{k+1}$ such that
$\mathcal B_k(x_{k+1},x_0,y_0,y_{k+1})$, $x_0R^C_\Vdash y_0$ and $y_{k+1}\Vdash A$.
\end{lemma}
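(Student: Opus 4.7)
The plan is to proceed by induction on $k$, unfolding the recursive definitions of $\uu_{k+1}$ and $\mathcal B_k$ in parallel. Since $R^{-1}$ is transitive, $R$ itself is transitive, which I will use freely to chain $R$-steps.

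For the base case $k=0$, I would use $x_1\Vdash\Diamond\neg(D_1\rhd\neg C)$ to pick an $x_0$ with $x_1Rx_0$ and $x_0\Vdash\neg(D_1\rhd\neg C)$. Unpacking the latter gives some $y_0$ with $x_0Ry_0$, $y_0\Vdash D_1$, and every $z$ with $y_0S_{x_0}z$ forcing $C$; this is precisely $x_0R^C_\Vdash y_0$. By transitivity of $R$, $x_1Ry_0$, so $x_1\Vdash D_1\rhd A$ produces $y_1$ with $y_0S_{x_1}y_1\Vdash A$. This matches $\mathcal B_0(x_1,x_0,y_0,y_1)$ directly.

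For the inductive step, from $x_{k+2}\Vdash\uu_{k+2}=\Diamond((D_{k+1}\rhd D_{k+2})\wedge \uu_{k+1})$ I would extract $x_{k+1}$ with $x_{k+2}Rx_{k+1}$ and $x_{k+1}\Vdash(D_{k+1}\rhd D_{k+2})\wedge \uu_{k+1}$. The key move is to apply the induction hypothesis to $x_{k+1}$ with the role of $A$ played by $D_{k+2}$, yielding $x_0,y_0,y_{k+1}$ with $\mathcal B_k(x_{k+1},x_0,y_0,y_{k+1})$, $x_0R^C_\Vdash y_0$, and $y_{k+1}\Vdash D_{k+2}$. By Lemma \ref{theorem:HSubseteqR} we have $x_{k+1}Ry_{k+1}$, whence $x_{k+2}Ry_{k+1}$ by transitivity of $R$. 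Then $x_{k+2}\Vdash D_{k+2}\rhd A$ supplies some $y_{k+2}$ with $y_{k+1}S_{x_{k+2}}y_{k+2}\Vdash A$, and gluing this to $\mathcal B_k(x_{k+1},x_0,y_0,y_{k+1})$ through the recursive clause for $\mathcal B_{k+1}$ (using $x_{k+1}$ and $y_{k+1}$ as the existential witnesses) yields $\mathcal B_{k+1}(x_{k+2},x_0,y_0,y_{k+2})$.

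The only mild subtlety is choosing the right "inner" formula to feed to the induction hypothesis: because each layer of $\uu_n$ introduces a fresh pivot $D_n$, one must apply IH with $A\mapsto D_{k+2}$ rather than with the target $A$ itself, and only afterwards exploit the outer-level hypothesis $D_{k+2}\rhd A$ at $x_{k+2}$ to reach a world forcing $A$. Beyond this bookkeeping and one invocation of Lemma \ref{theorem:HSubseteqR} together with transitivity of $R$, the argument is a routine unfolding of definitions.
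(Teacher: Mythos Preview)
Your proof is correct and follows essentially the same approach as the paper's own proof: induction on $k$, applying the induction hypothesis with $D_{k+2}$ in place of $A$, then using Lemma \ref{theorem:HSubseteqR} and transitivity of $R$ to feed $y_{k+1}$ into $D_{k+2}\rhd A$ at $x_{k+2}$. Your base case is simply a spelled-out version of what the paper leaves as ``easily checked'' (note that $y_0\Vdash C$ follows from reflexivity of $S_{x_0}$, so your claim that $x_0R^C_\Vdash y_0$ is indeed justified).
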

\begin{proof}
Induction on $k$.
If $k=0$ then $\uu_{k+1}=\Diamond\neg(D_1\rhd \neg C)$ and the statement is easily checked.
For the inductive case, we assume 
\[
x_{k+2}\Vdash \uu_{k+2} \wedge (D_{k+2} \rhd A).
\] 
Recall that $\uu_{k+2}:=\Diamond((D_{k+1}\rhd D_{k+2})\wedge\uu_{k+1})$.
Thus, there exists some
$x_{k+1}$ with $x_{k+2}Rx_{k+1}$ and 
\[
x_{k+1}\Vdash(D_{k+1}\rhd D_{k+2})\wedge\uu_{k+1}.
\]
Applying the (IH) (with $D_{k+2}$ substituted for $A$)
we find $x_0$, $y_0$ and $y_{k+1}$ with $\mathcal B_k(x_{k+1},x_0,y_0,y_{k+1})$, $x_0R^C_\Vdash y_0$ and $y_{k+1}\Vdash D_{k+2}$.
As $\mathcal B_k(x_{k+1},x_0,y_0,y_{k+1})$ we get $x_{k+1}Ry_{k+1}$ (Lemma \ref{theorem:HSubseteqR}). Since we had $x_{k+2}Rx_{k+1}$ we see that $x_{k+2}Ry_{k+1}\Vdash D_{k+2}$, and since $x_{k+2}\Vdash D_{k+2}\rhd A$, we find some $y_{k+2}$ with $y_{k+1}S_{x_{k+2}}y_{k+2}$ and $y_{k+2} \Vdash A$. By definition of $\mathcal B_{k+1}$ we have $\mathcal B_{k+1}(x_{k+2},x_0,y_0,y_{k+2})$.
\end{proof}

\begin{corollary}\label{corollary:FrameConditionImpliesPrincipleBroadHierarchy}
If $F\models \mathcal F^n$ then $F\models \principle{R}^n$.
\end{corollary}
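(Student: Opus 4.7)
The plan is to fix a Veltman frame $F$ with $F\models\mathcal F^n$, fix an arbitrary forcing relation $\Vdash$ on $F$, and take $w\in W$ with $w\Vdash A\rhd B$. I would then show that for any $x$ with $wRx$ forcing the antecedent of the outer $\rhd$ on the right of $\principle{R}^n$, one can produce a witness $u$ with $xS_wu\Vdash B\wedge\Box C$. The argument has three conceptual moves: extract the internal $R/S$-backbone guaranteed by the antecedent via Lemma \ref{lemm:lemm2}; use $A\rhd B$ once at the outer world $w$ to extend this backbone by one more $S_w$-step; and finally invoke $\mathcal F^n$ to get $\Box C$ at the extension.

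For $n\geq 1$, I would apply Lemma \ref{lemm:lemm2} with $k=n-1$ to the hypothesis $x\Vdash \uu_n\wedge (D_n\rhd A)$, obtaining $x_0,y_0,y_n$ with $\mathcal B_{n-1}(x,x_0,y_0,y_n)$, $x_0R^C_{\Vdash}y_0$ and $y_n\Vdash A$. The base case $n=0$ is handled by directly reading off $x_0=x$, $y_0\Vdash A$ and $\forall z\,(y_0S_xz\to z\Vdash C)$ from $x\Vdash \neg(A\rhd\neg C)$; reflexivity of $S_x$ then supplies $y_0\Vdash C$, so $xR^C_{\Vdash}y_0$ holds, and we set $y_n=y_0$. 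By Lemma \ref{theorem:HSubseteqR} we have $xRy_n$ and hence $wRy_n$, so applying $w\Vdash A\rhd B$ to $y_n\Vdash A$ produces $y_{n+1}$ with $y_nS_wy_{n+1}$ and $y_{n+1}\Vdash B$. Unfolding the recursive definition of $\mathcal B_n$, with $w$ in the role of the outer $R$-node and $x,y_n$ as the intermediate witnesses, this gives $\mathcal B_n(w,x_0,y_0,y_{n+1})$.

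I would now apply $F\models\mathcal F^n$ to $\mathcal B_n(w,x_0,y_0,y_{n+1})$: for every $v$ with $y_{n+1}Rv$ we obtain $y_0S_{x_0}v$, and the third conjunct of $x_0R^C_{\Vdash}y_0$ then forces $v\Vdash C$, so $y_{n+1}\Vdash\Box C$. Combined with $y_{n+1}\Vdash B$ this gives $y_{n+1}\Vdash B\wedge\Box C$. The required $xS_wy_{n+1}$ follows because $wRx$ and $xRy_n$ place $x,y_n$ in $w\uparrow$, so $xS_wy_n$ by the defining inclusion $R\cap(w\uparrow)^2\subseteq S_w$; transitivity of $S_w$ together with $y_nS_wy_{n+1}$ then yields $xS_wy_{n+1}$, and $y_{n+1}$ is the desired $u$.

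The only nontrivial conceptual point, and what motivates the indexing in the first place, is the off-by-one between the length-$(n-1)$ backbone produced by Lemma \ref{lemm:lemm2} and the length-$n$ backbone demanded by $\mathcal F^n$. The single application of $A\rhd B$ at the outer world $w$ is exactly what bridges this gap: it converts the $A$-forcing endpoint $y_n$ delivered by the lemma into the longer-backbone endpoint $y_{n+1}$ at which the frame condition finally delivers $\Box C$. Everything else is routine bookkeeping with the semantic clause for $\rhd$ and the closure properties of $S_w$.
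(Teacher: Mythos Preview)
Your argument is correct and follows essentially the same route as the paper: apply Lemma~\ref{lemm:lemm2} with $k=n-1$ at the inner world to obtain the $\mathcal B_{n-1}$-backbone with a $C$-assuring tip, use $A\rhd B$ once at the outer world to extend to a $\mathcal B_n$-configuration, and then invoke $\mathcal F^n$ together with $x_0R^C_\Vdash y_0$ to secure $\Box C$. The only cosmetic differences are that the paper names the outer world $x_{n+1}$ rather than $w$, dispatches the case $n=0$ by citation rather than spelling it out as you do, and obtains $x_n S_{x_{n+1}} y_{n+1}$ in one breath rather than via $xS_w y_n$ plus transitivity.
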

\begin{proof}
Induction on $n$. For $n=0$ this is known (see \cite{GorisJoosten:2011:ANewPrinciple}), so we assume $n>0$. Let $\Vdash$ be a forcing relation,
let $x_{n+1},x_n\in W$ and assume $x_{n+1}\Vdash A\rhd B$, $x_{n+1}Rx_n$ and
$x_n\Vdash \uu_n\wedge(D_{n}\rhd A)$.
By Lemma \ref{lemm:lemm2} we find $x_0$, $y_0$ and $y_n$ such that
$\mathcal B_{n-1}(x_n,x_0,y_0,y_n)$, with $x_0R^C_\Vdash y_0$ and $y_n\Vdash A$.
We have that $\mathcal B_{n-1}(x_n,x_0,y_0,y_n)$ implies $x_nRy_n$ (Lemma \ref{theorem:HSubseteqR}) and thus since $x_{n+1}Rx_n$ we also we have $x_{n+1}Ry_n\Vdash A$.
By assumption $x_{n+1}\vdash A\rhd B$ so that for some $y_{n+1}$ we have $y_nS_{x_{n+1}}y_{n+1} \Vdash B$.
Clearly, we also have $x_{n}S_{x_{n+1}}y_{n+1}$ so that we are done if we have shown that $y_{n+1}\Vdash\Box C$.
To this extent, we choose some $u$ with $y_{n+1}Ru$.
Since we have that $\mathcal B_n(x_{n+1},x_0,y_0,y_{n+1})$, by $\mathcal F^n$ we have also $y_0S_{x_0}u$. But $x_0R^C_\Vdash y_0$ and thus we have $u\Vdash C$, as required.
\end{proof}

To prove the converse implication, we start again with a technical lemma. As before we shall denote by $\bm a$, ${\bm b}$, ${\bm c}$, and ${\bm d}_k$, propositional variables that shall play the role of the $A$, $B$, $C$ and $D_k$ respectively in the principles $\principle{R}^n$. Let $\uup_k$ denote the formula that arises by simultaneously substituting $\bm c$ for $C$ and ${\bm d}_k$ for $D_k$ in $\uu_k$.

\begin{lemma}\label{lemm:lemm3}
Let $\{ \bm a, \bm c, \bm d_1, \ldots , \bm d_{k+1}\}$ be a collection of distinct propositional variables. If $F \models \mathcal B_k(x_{k+1},x_0,y_0,y_{k+1})$, then there exists a forcing relation $\Vdash$ on $F$ such that
\begin{enumerate}
\item
$x_{k+1}\Vdash\uup_{k+1}\wedge(\bm d_{k+1}\rhd \bm a)$;
\item
$x\Vdash \bm c$ iff $y_0S_{x_0}x$;
\item
$x\Vdash \bm a \Leftrightarrow x=y_{k+1}$;
\item
$x\nVdash \bm p$ for any $\bm p \notin \{  \bm d_1, \ldots , \bm d_{k+1}, \bm c, \bm a\}$.
\end{enumerate}
\end{lemma}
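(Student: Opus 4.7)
My plan is to build the valuation directly from the chain of worlds encoded by $\mathcal B_k(x_{k+1},x_0,y_0,y_{k+1})$. Unfolding the recursion produces intermediate worlds $x_k,\ldots,x_1$ and $y_1,\ldots,y_k$ satisfying $x_{j+1}Rx_j$ for $0\le j\le k$, $x_0Ry_0$, and $y_jS_{x_{j+1}}y_{j+1}$ for $0\le j\le k$. Since each $S_{x_{j+1}}$ is a relation on $x_{j+1}{\uparrow}$ and $R$ is transitive (because $R^{-1}$ is), this also yields $x_{j+1}Ry_{j+1}$ and $x_{j+1}Ry_j$ for every $0\le j\le k$.

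The valuation I would use is the minimal one that does the job: force $\bm a$ exactly at $y_{k+1}$; force $\bm d_j$ exactly at the single world $y_{j-1}$ for each $1\le j\le k+1$; force $\bm c$ exactly at the $S_{x_0}$-successors of $y_0$; and leave every other propositional variable false everywhere. With this definition, clauses 2, 3, and 4 of the lemma hold by construction. The first conjunct of clause 1, namely $x_{k+1}\Vdash \bm d_{k+1}\rhd \bm a$, is also direct: the unique $\bm d_{k+1}$-forcer $R$-reachable from $x_{k+1}$ is $y_k$, and the chain supplies $y_k S_{x_{k+1}} y_{k+1}\Vdash \bm a$.

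The principal work is the second conjunct, $x_{k+1}\Vdash \uup_{k+1}$, which I would handle by an internal induction on $j$ showing $x_{j+1}\Vdash \uup_{j+1}$ for every $0\le j\le k$. The base case uses $x_0$ as the $\Diamond$-witness for $\uup_1$ at $x_1$: $y_0$ then witnesses $\neg(\bm d_1\rhd \neg \bm c)$ at $x_0$, because $x_0Ry_0\Vdash \bm d_1$ and every $S_{x_0}$-successor of $y_0$ forces $\bm c$ by fiat. For the step from $j$ to $j+1$, the world $x_{j+1}$ is used as a $\Diamond$-witness for $\uup_{j+2}$ at $x_{j+2}$; the inductive hypothesis supplies the conjunct $\uup_{j+1}$ there, and $x_{j+1}\Vdash \bm d_{j+1}\rhd \bm d_{j+2}$ holds because the only $\bm d_{j+1}$-forcer $R$-reachable from $x_{j+1}$ is $y_j$, with $y_j S_{x_{j+1}} y_{j+1}$ from the chain and $y_{j+1}\Vdash \bm d_{j+2}$ by construction.

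No step is particularly hard; the main obstacle is just keeping the bookkeeping honest across a nest of $\Diamond$ and $\rhd$ operators. Singling out each $\bm d_j$ at a unique world reduces every $\rhd$-assertion one has to check to a single instance of the definition, and the only global property of $R$ used is transitivity, which is what collapses the chain $x_{j+1}Rx_jRy_j$ (or $x_1Rx_0Ry_0$ in the base case) to the one-step relation $x_{j+1}Ry_j$ that the $\rhd$-clauses demand.
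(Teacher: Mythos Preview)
Your proposal is correct and uses exactly the valuation the paper describes: $\bm d_{i+1}$ true only at $y_i$, $\bm a$ true only at $y_{k+1}$, and $\bm c$ true exactly at the $S_{x_0}$-successors of $y_0$. The only organizational difference is that the paper formalizes the verification by an external induction on $k$ (applying the inductive hypothesis with $\bm d_{k+1}$ substituted for $\bm a$ and then extending the valuation), whereas you unfold the whole $\mathcal B_k$-chain first and run an internal induction on $j$ to establish $x_{j+1}\Vdash\uup_{j+1}$; both routes check the same facts about the same valuation.
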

\begin{proof}
The idea is very simple using the informal description of $\mathcal B_k$ being the antecedent of \eqref{equation:informalFrameConditionForBroadHierarchy}. We define a valuation $\Vdash$ so that $\bm d_{i+1}$ is only true at $y_i$ and $\bm a$ is only true at $y_{k+1}$. Moreover, we define $x\Vdash \bm c$ iff $y_0S_{x_0}x$ and $x\nVdash \bm p$ for any $\bm p \notin \{  \bm d_1, \ldots , \bm d_{k+1}, \bm c, \bm a\}$. It is not hard to see that $x_{k+1}\Vdash\uup_{k+1}\wedge(\bm d_{k+1}\rhd \bm a)$ for this valuation $\Vdash$. 

To make the argument precise, we proceed by induction on $k$. If $k=0$ then $\mathcal B_k(x_1,x_0,y_0,y_1)$ simply means $x_1Rx_0Ry_0S_{x_1}y_1$ and we define
\[
x\Vdash \bm a \Leftrightarrow x=y_1, \quad x\Vdash \bm c \Leftrightarrow y_0S_{x_0}x\quad\textrm{and,}\quad
x\Vdash \bm d_1\Leftrightarrow x=y_0.
\]
The lemma is easily checked if we further define $x\nVdash \bm p$ for any $\bm p \notin \{  \bm d_1, \bm c, \bm a\}$.

For the inductive case we consider $k>0$. Then $\mathcal B_k(x_{k+1},x_0,y_0,y_{k+1})$ implies that there are $x_k$ and $y_k$ such that 
\[
x_{k+1} R\, x_k\, \mathcal B_{k-1}[x_0,y_0]\,y_{k} \, S_{x_{k+1}}\, y_{k+1}.
\] 
The (IH) (with $\bm d_{k+1}$ substituted for $\bm a$) gives a forcing relation $\Vdash$ such that
\[
x_k\Vdash\uup_k\wedge (\bm d_k\rhd \bm d_{k+1}),\quad x_0R^{\bm c}_\Vdash y_0, \quad x\Vdash \bm d_{k+1}\Leftrightarrow x=y_{k}
\]
and $x\nVdash \bm p \textrm{ for } \bm p \notin \{  \bm d_1, \ldots , \bm d_{k+1}, \bm c\}$.
So we have $x_{k+1}\Vdash\Diamond\big(\uup_k\wedge(\bm d_k\rhd \bm d_{k+1})\big)$; in other words $x_{k+1}\Vdash \uup_{k+1}$.
We now define $\Vdash'$ as follows
\[
x\Vdash' \bm a \Leftrightarrow x=y_{k+1} \quad \text{and} \quad x \Vdash' \bm p \Leftrightarrow x\Vdash \bm p \textrm{ for } \bm p \neq \bm a.
\]
Clearly, the properties $x_k\Vdash\uup_k\wedge (\bm d_k\rhd \bm d_{k+1})$, $aR^{\bm c}_\Vdash b$, $x\Vdash \bm d_{k+1}\Leftrightarrow x=y_{k}$ simply extend to $\Vdash'$ and likewise we have that $x\nVdash' \bm p$ for any $\bm p \notin \{  \bm d_1, \ldots , \bm d_{k+1}, \bm c, \bm a\}$.
Moreover, we now have $x_{k+1}\Vdash' \bm d_{k+1}\rhd \bm a$ as well.
\end{proof}

As a corollary to this lemma, we can now obtain the full the frame conditions for the principles $\principle{R}^n$.

\begin{theorem}
For each number $n$ we have $F\models \mathcal F^n$ iff $F\models \principle{R}^n$.
\end{theorem}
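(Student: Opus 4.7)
The forward direction (from $\mathcal F^n$ to $\principle{R}^n$) is already Corollary \ref{corollary:FrameConditionImpliesPrincipleBroadHierarchy}, so my plan only needs to address the converse. The base case $n = 0$ is the known frame-correspondence for $\principle{R}^0$ from \cite{GorisJoosten:2011:ANewPrinciple}, so I would treat $n \geq 1$ separately, using Lemma \ref{lemm:lemm3} as the essential ingredient together with one extra propositional variable for the ``$B$'' side of the principle.

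Fix $n \geq 1$, assume $F \models \principle{R}^n$, and pick $x_{n+1}, x_0, y_0, y_{n+1}$ with $\mathcal B_n(x_{n+1}, x_0, y_0, y_{n+1})$ together with $u$ such that $y_{n+1} R u$; the goal is to derive $y_0 S_{x_0} u$. Unfolding the recursive definition of $\mathcal B_n$ exposes intermediate worlds $x_n, y_n$ with $x_{n+1} R x_n$, $\mathcal B_{n-1}(x_n, x_0, y_0, y_n)$, and $y_n S_{x_{n+1}} y_{n+1}$. I would then feed $\mathcal B_{n-1}(x_n, x_0, y_0, y_n)$ into Lemma \ref{lemm:lemm3} (with $k = n-1$) to obtain a forcing relation $\Vdash$ on $F$ with $x_n \Vdash \uup_n \wedge (\bm d_n \rhd \bm a)$, with the $\bm c$-worlds being precisely the $S_{x_0}$-successors of $y_0$, with $\bm a$ forced only at $y_n$, and no other propositional variable forced anywhere.

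Next I would extend $\Vdash$ to $\Vdash'$ by stipulating that a fresh variable $\bm b$ is true exactly at $y_{n+1}$, leaving every other atom untouched. Since $\bm b$ does not appear in any formula produced by Lemma \ref{lemm:lemm3}, its conclusions transfer verbatim to $\Vdash'$. Moreover, $x_{n+1} \Vdash' \bm a \rhd \bm b$: the only $R$-successor of $x_{n+1}$ that forces $\bm a$ is $y_n$ (reached via $x_{n+1} R x_n$, Lemma \ref{theorem:HSubseteqR} and transitivity of $R$), and $y_n S_{x_{n+1}} y_{n+1} \Vdash' \bm b$ supplies the $S_{x_{n+1}}$-witness. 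Invoking $\principle{R}^n$ at $x_{n+1}$ under the substitution $A \mapsto \bm a$, $B \mapsto \bm b$, $C \mapsto \bm c$, $D_i \mapsto \bm d_i$ now yields $x_{n+1} \Vdash' \uup_n \wedge (\bm d_n \rhd \bm a) \rhd \bm b \wedge \Box \bm c$; from $x_{n+1} R x_n$ and $x_n \Vdash' \uup_n \wedge (\bm d_n \rhd \bm a)$ this produces a $z$ with $x_n S_{x_{n+1}} z$ and $z \Vdash' \bm b \wedge \Box \bm c$. Because $\bm b$ is forced only at $y_{n+1}$, necessarily $z = y_{n+1}$, hence $y_{n+1} \Vdash' \Box \bm c$; instantiating at $u$ gives $u \Vdash' \bm c$, which by construction is $y_0 S_{x_0} u$.

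The step I expect to require the most care is the extension from $\Vdash$ to $\Vdash'$ together with the verification that $x_{n+1} \Vdash' \bm a \rhd \bm b$: one must be sure that introducing $\bm b$ does not disturb the delicate labelling from Lemma \ref{lemm:lemm3}, and that there is no stray $R$-successor of $x_{n+1}$ forcing $\bm a$ other than $y_n$. Once this bookkeeping is in place, the instance of $\principle{R}^n$ at $x_{n+1}$ mechanically delivers the required frame clause, and the proof splits cleanly into a single application of Lemma \ref{lemm:lemm3} followed by a single application of the principle.
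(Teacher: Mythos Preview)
Your proposal is correct and follows essentially the same route as the paper: both reduce the converse to a single invocation of Lemma~\ref{lemm:lemm3} together with one fresh variable pinning down $y_{n+1}$, then read off $\Box\bm c$ at $y_{n+1}$ from the instance of $\principle{R}^n$ at $x_{n+1}$. The only cosmetic difference is that the paper applies Lemma~\ref{lemm:lemm3} directly to $\mathcal B_n(x_{n+1},x_0,y_0,y_{n+1})$ with the renaming $\bm d_{n+1}\mapsto\bm a$, $\bm a\mapsto\bm b$ (so that $\bm a\rhd\bm b$ comes out of the lemma for free and the unfolding happens at the level of $\uup_{n+1}$), whereas you unfold $\mathcal B_n$ one step first, apply the lemma at level $n-1$, and then add $\bm b$ by hand---which is exactly the inductive step of Lemma~\ref{lemm:lemm3} inlined.
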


\begin{proof}
The $\Rightarrow$ direction is just Corollary \ref{corollary:FrameConditionImpliesPrincipleBroadHierarchy} so we focus on the other direction. Thus, we suppose that $F\models \principle{R}^n$, consider any $x_{n+1},x_0,y_0,y_{n+1}\in W$ with $\mathcal B_n(x_{n+1},x_0,y_0,y_{n+1})$ and set out to show that for any $u$ with $y_{n+1}Ru$ we have $y_0S_{x_0}u$. 
We now apply Lemma \ref{lemm:lemm3} and simultaneously substitute $\bm a$ for $\bm d_{n+1}$ and $\bm b$ for $\bm a$ to see that there exists a forcing relation $\Vdash$ such that
\[
x_{n+1}\Vdash\uup_{n+1} [\bm d_{n+1}/\bm a] \wedge(\bm a\rhd \bm b),\quad x\Vdash \bm c\Leftrightarrow y_0S_{x_0}x
\quad\textrm{and}\quad x\Vdash \bm b \Leftrightarrow x=y_{n+1}.
\]
Since $n=0$ is known, we assume $n>0$. Thus, we find $x_n$ with $x_{n+1}Rx_n$ and $x_n\Vdash  \uup_{n-1} \wedge \bm d_{n}\rhd \bm a$ (note that $\uup_{n-1}[\bm d_{n+1}/\bm a] = \uup_{n-1}$).
Using $F\models  \overline{\principle R}^n$ we see that there must exist some
$x$ with $x\Vdash \bm b \wedge\Box \bm c$. But $y_{n+1}$ is the only world that forces $\bm b$ thus necessarily $y_{n+1}\Vdash\Box \bm c$. By the choice of $\Vdash$ we thus have that if $y_{n+1}Ru$ then $y_0S_{x_0}u$.
\end{proof}

Using the frame condition we readily see that the broad series of principles does not define a hierarchy.

\begin{corollary}
For $n\neq m$ we have $\extil{R}^{n}\nvdash \extil{R}^m$.
\end{corollary}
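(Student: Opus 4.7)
The plan is to leverage the frame-correspondence theorem just proved: for each pair $n \neq m$ it will suffice to exhibit a Veltman frame $F$ with $F \models \mathcal F^n$ but $F \not\models \mathcal F^m$, since such an $F$ witnesses $\extil{R}^n \nvdash \principle{R}^m$ and hence $\extil{R}^n \nvdash \extil{R}^m$. Depending on whether $m < n$ or $m > n$, I would use one of two different constructions.

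In the case $m < n$, I would take the frame pictured in Figure \ref{figure:broadSeries} for $\mathcal F^m$, augmented by one fresh world $u$ together with the edge $y_{m+1} R u$; then transitively close $R$ and define each $S_{x_j}$ to be the minimal reflexive and transitive relation on $x_j\uparrow$ containing $R \cap (x_j\uparrow)^2$ and the explicit edges $y_i S_{x_{i+1}} y_{i+1}$ from the picture. Then $y_0 S_{x_0} u$ is false because $u$ is not in $x_0\uparrow$, so $\mathcal F^m$ is violated by design. On the other hand no $R$-chain in $F$ is long enough to realise $\mathcal B_n$ for $n > m$, so $\mathcal F^n$ is vacuously satisfied.

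In the case $m > n$, I would start from the same frame and add two further items: the $R$-edge $x_{m-n} R u$, together with its transitive-closure consequences, and the single $S$-edge $y_{m-n} S_{x_{m-n}} u$, together with the reflexive, transitive and $R$-closure consequences. The rigidity of our $y$-chain --- where the only non-reflexive, non-$R$-induced $S$-edges between the $y_i$'s are the explicit ones $y_i S_{x_{i+1}} y_{i+1}$ --- forces the unique $\mathcal B_n$-configuration ending at $y_{m+1}$ to be $(x_{m+1}, x_{m-n}, y_{m-n}, y_{m+1})$, and the two added edges supply precisely the $\mathcal F^n$-conclusion $y_{m-n} S_{x_{m-n}} u$. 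The added edges leave $u$ outside $x_0\uparrow$, so the big configuration $(x_{m+1}, x_0, y_0, y_{m+1})$ still witnesses failure of $\mathcal F^m$.

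The main obstacle is the verification in the second case that the enrichment does not spawn any spurious $\mathcal B_n$-configuration whose conclusion then fails. This amounts to a careful but mechanical case analysis over the possible endpoints of $\mathcal B_n$-configurations in the enriched frame. The analysis is made tractable by three observations: the added $R$-edges only enlarge $x_j\uparrow$ for $j \geq m - n$; the added $S$-edge lives in $S_{x_{m-n}}$; and the uniqueness of $S$-paths through the $y_i$-chain pins down the shape of any $\mathcal B_n$-configuration that touches the new edges. All remaining configurations either are forced into the shape of the handled one, or terminate at a world with no $R$-successor and are thus vacuously fine.
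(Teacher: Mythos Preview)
Your approach is exactly the paper's: invoke the frame correspondence and exhibit, for each pair $n\neq m$, a Veltman frame validating $\mathcal F^n$ but not $\mathcal F^m$. The paper's own proof is a single sentence (``it is easy to exhibit a frame $F$ so that $F\models\mathcal F^n$ but $F\not\models\mathcal F^m$'') with no construction given, so your sketch is already considerably more detailed than what the paper supplies.

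A couple of minor remarks on presentation. In your first case the figure already contains a world $z$ with $y_{m+1}Rz$, so your ``fresh world $u$'' is redundant; you can simply take $u=z$. In the second case your verification is correct in outline: the key points are (i) any $\mathcal B_n$-configuration whose terminal world has an $R$-successor must end either at $y_{m+1}$ or at some $x_j$; (ii) for $c=x_j$ all the intermediate $y'_i$ in the configuration are forced to be $x$'s (since the only $S$-predecessors of an $x_j$ are $x_k$ with $k\geq j$), whence the required $S$-edge follows from $R$-inducedness; (iii) for $c=y_{m+1}$ the rigidity of the $S$-staircase forces the configuration to be $(x_{m+1},x_{m-n},y_{m-n},y_{m+1})$, which your added edge handles. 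This is precisely the ``mechanical case analysis'' you allude to, and it goes through.
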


\begin{proof}
For each $m\neq n$ it is easy to exhibit a frame $F$ so that $F\models \mathcal F^n$ but $F\not \models \mathcal F^m$.
\end{proof}

\subsection{Arithmetical soundness}
We will now see that all the principles $\principle R^n$ are arithmetically sound and begin with a simple lemma.

\begin{lemma}\label{theorem:LemmaBroadHierarchyIsSound}
For any theory $T$ extending \sonetwo and any natural number $n>0$, we have that
\[
T\vdash \uu_n  \to \forallcut K \, \Diamond (D_n \wedge \Box^{\dot K} C).
\]
\end{lemma}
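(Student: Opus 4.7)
The plan is to proceed by induction on $n\geq 1$, carrying the statement of the lemma itself as the inductive hypothesis (quantified uniformly over all cuts $K$), and using Lemma \ref{theorem:KeepWitnessSmall} as the workhorse that threads cuts through the iterated $\rhd$-structure hidden in $\uu_n$.

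For the base case $n=1$, recall $\uu_1=\Diamond\neg(D_1\rhd \neg C)$. Lemma \ref{theorem:negatedRhdYieldsSmallWitness} directly turns $\neg(D_1\rhd \neg C)$ into $\forallcut K\,\Diamond(D_1\wedge\Box^{\dot K}C)$. Pulling the external $\forallcut K$ past the outer $\Diamond$ and collapsing $\Diamond\Diamond$ to $\Diamond$ (the dual of axiom L2, provable in \gl) then yields exactly $\forallcut K\,\Diamond(D_1\wedge\Box^{\dot K}C)$.

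For the inductive step, I fix an arbitrary cut $K$ and reason in $T$ inside the leading $\Diamond$ of $\uu_{n+1}=\Diamond((D_n\rhd D_{n+1})\wedge \uu_n)$. Applying Lemma \ref{theorem:KeepWitnessSmall} to the interpretation witnessing $D_n\rhd D_{n+1}$, with outer cut $K$ and auxiliary formula $C$, extracts an internal cut $J$ satisfying $(D_n\wedge\Box^{\dot J}C)\rhd (D_{n+1}\wedge\Box^{\dot K}C)$. The inductive hypothesis, specialized to this internal $J$, supplies $\Diamond(D_n\wedge\Box^{\dot J}C)$; axiom J4 then transports this across the $\rhd$ to $\Diamond(D_{n+1}\wedge\Box^{\dot K}C)$. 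Closing the outer diamond and once more applying $\Diamond\Diamond\varphi\to\Diamond\varphi$ gives $\Diamond(D_{n+1}\wedge\Box^{\dot K}C)$, as required.

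The point that needs the most care is the scope of the cut $J$: it is genuinely internal, since it depends on the Pudl\'ak-style interpretation for $D_n\rhd D_{n+1}$ that is only available inside the leading diamond. This is precisely why the inductive hypothesis must be formulated uniformly over all cuts (as $\forallcut K$) rather than producing one specific cut --- so that one is free to plug in the freshly obtained internal $J$ at each step of the recursion.
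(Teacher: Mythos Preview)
Your proof is correct and follows essentially the same route as the paper: induction on $n$, with the base case handled via Lemma~\ref{theorem:negatedRhdYieldsSmallWitness} and the inductive step via Lemma~\ref{theorem:KeepWitnessSmall} combined with the inductive hypothesis and $\principle{J4}$, finishing each time by pulling $\forallcut K$ past $\Diamond$ and collapsing $\Diamond\Diamond$ to $\Diamond$. The only cosmetic difference is that you fix $K$ externally before entering the leading $\Diamond$ (relying implicitly on $\Box_T\cut{\dot K}\to\Box_T\Box_T\cut{\dot K}$ so that $K$ remains a cut inside), whereas the paper derives the full $\forallcut K$ conclusion inside the diamond and then extracts it; these are equivalent reorderings of the same quantifier manipulation.
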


\begin{proof}
We proceed by induction on $n$ and first consider $n=1$. Thus, we reason in $T$ and assume $\uu_1$, that is, $\Diamond \neg (D_1\rhd \neg C)$. We conclude $\Diamond \forallcut K \, \Diamond (D_1 \wedge \Box^{\dot K} C)$, whence $\forallcut K \, \Diamond \Diamond (D_1 \wedge \Box^{\dot K} C)$ and also $\forallcut K\, \Diamond (D_1 \wedge \Box^{\dot K} C)$ as was to be shown.

Next, we consider the inductive case, again reasoning in $T$ and assuming $\uu_{n+1}$ which is $\Diamond \big( (D_n\rhd D_{n+1}) \wedge \uu_n \big)$. By the (IH) we conclude from $\uu_n$ that 
\begin{equation}\label{equation:IHForBroadHierarchyLemma}
\forallcut J \, \Diamond\, (D_n \wedge \Box^{\dot J} C).
\end{equation} 
By Lemma \ref{theorem:KeepWitnessSmall} we obtain from $D_n \rhd D_{n+1}$ that 
\begin{equation}\label{eq:BroadHGeneralizedPudlak}
\forallcut K \, \existscut J\ D_n\wedge \Box^{\dot J}C \rhd D_{n+1} \wedge \Box^{\dot K} C.
\end{equation} 
Combining $D_n\wedge \Box^{\dot J}C \rhd D_{n+1} \wedge \Box^{\dot K} C \to \big( \Diamond (D_n\wedge \Box^{\dot J}C) \to \Diamond (D_{n+1} \wedge \Box^{\dot K} C) \big)$ with \eqref{equation:IHForBroadHierarchyLemma} and \eqref{eq:BroadHGeneralizedPudlak} under a $\Diamond$ we conclude that 
\[
\begin{array}{lll}
\Diamond \big( (D_n\rhd D_{n+1}) \wedge \uu_n \big) &\to & \Diamond \big(  \forallcut K \, \Diamond \, (D_{n+1} \wedge \Box^{\dot K} C)\big )\\
&\to & \forallcut K \, \Diamond \big( \Diamond \, (D_{n+1} \wedge \Box^{\dot K} C)\big )\\
&\to & \forallcut K \,  \Diamond \, (D_{n+1} \wedge \Box^{\dot K} C)\\
\end{array}
\]
as was to be shown.
\end{proof}

With this lemma, we can now prove the soundness of the series $\principle R^n$.

\begin{theorem}
For each natural number $n$ we have that $\principle R^n$ is arithmetically sound in any theory $T$ extending \sonetwo.
\end{theorem}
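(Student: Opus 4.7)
The plan is to establish $T \vdash \principle{R}^n$ for each $n \geq 1$ by a direct argument; the base case $n = 0$ is cited as prior work. Unlike the slim hierarchy, the recursion here has already been absorbed into Lemma \ref{theorem:LemmaBroadHierarchyIsSound}, which delivers from $\uu_n$ alone the existence of arbitrarily small witnesses of $D_n \wedge \Box C$. So no nested induction is needed on top of that lemma; a single uniform chain of $\rhd$-steps handles each $n$.

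Working in $T$ under the assumption $A \rhd B$, the strategy is to compose three interpretations. First, Lemma \ref{theorem:KeepWitnessSmall} applied to $A \rhd B$ (with $\gamma := C$ and $I$ the identity cut) yields a definable cut $J_0$ with $A \wedge \Box^{\dot J_0} C \rhd B \wedge \Box C$. Next, Lemma \ref{theorem:KeepWitnessSmall} applied to $D_n \rhd A$, this time with $I := J_0$, yields a cut $J_1$ with $D_n \wedge \Box^{\dot J_1} C \rhd A \wedge \Box^{\dot J_0} C$. Finally, Lemma \ref{theorem:LemmaBroadHierarchyIsSound} gives, from $\uu_n$ alone, that $\Diamond (D_n \wedge \Box^{\dot J_1} C)$, and $\mathsf{J4}$ promotes this to $\Diamond (A \wedge \Box^{\dot J_0} C)$.

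Putting this together: from $\uu_n \wedge (D_n \rhd A)$ we derive $\Diamond (A \wedge \Box^{\dot J_0} C)$ inside $T$. By necessitation and $\mathsf{J1}$ this lifts to $\uu_n \wedge (D_n \rhd A) \rhd \Diamond (A \wedge \Box^{\dot J_0} C)$. Composing with $\Diamond X \rhd X$ (axiom $\mathsf{J5}$) and then with the first interpretation via transitivity ($\mathsf{J2}$) yields the desired $\uu_n \wedge (D_n \rhd A) \rhd B \wedge \Box C$, discharging the assumption $A \rhd B$ at the outer level.

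The one piece of bookkeeping that needs care is the order in which the cuts are fixed: $J_0$ must be obtained first from $A \rhd B$, and only then $J_1$ from $D_n \rhd A$ with target cut $J_0$. Reversing that order, or keeping $J_0$ merely universally quantified, would misalign the $\Box^{\dot J}$-bounds and break the transitive chain. Everything else is a routine assembly of $\il$-axioms with the small-witness machinery already in place, which is why once Lemma \ref{theorem:LemmaBroadHierarchyIsSound} is in hand the soundness proof for the whole series is essentially a one-line composition.
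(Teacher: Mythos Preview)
Your proof is correct and follows essentially the same route as the paper's: both obtain a cut from $A\rhd B$ via Pudl\'ak (you use the generalized Lemma~\ref{theorem:KeepWitnessSmall} with $I$ the identity, the paper uses the basic Lemma~\ref{theorem:PudlaksPrinciple}, which amounts to the same thing), both apply Lemma~\ref{theorem:KeepWitnessSmall} to $D_n\rhd A$ targeting that cut, and both feed in Lemma~\ref{theorem:LemmaBroadHierarchyIsSound} to close the chain. The only cosmetic difference is that the paper keeps the outer $\forallcut J$ quantifier and instantiates it at the end, whereas you fix $J_0$ up front; your ``necessitation'' step is slightly loosely phrased (the implication is a theorem only modulo $\Box_T\cut{J_0}$, which you do have from the $\existscut$), but this is exactly the informal passage the paper makes as well.
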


\begin{proof}
Since we already know that ${\sf R}^0$ is sound, we consider $n>0$. We reason in $T$, assume $A\rhd B$ and set out to prove $\uu_n \wedge (D_n\rhd A) \rhd B\wedge \Box C$. By Pudl\'ak's Lemma we get
\begin{equation}\label{equation:SoundnessProof:Pudlak}
\existscut J\ A\wedge \Box^JC \rhd B \wedge \Box C.
\end{equation}
On the other hand, by the generalization of Pudl\'ak's Lemma (Lemma \ref{theorem:KeepWitnessSmall}) applied to $D_n \rhd A$ we obtain that $\forallcut J \, \existscut K \ D_n \wedge \Box^{\dot K}C \rhd A \wedge \Box^{\dot J} C$ so that $\forallcut J \, \existscut K \ \big( \Diamond (D_n \wedge \Box^{\dot K}C )\to \Diamond ( A \wedge \Box^{\dot J} C) \big)$. By Lemma \ref{theorem:LemmaBroadHierarchyIsSound} we see that $\uu_n \to \forallcut K \Diamond (A \wedge \Box^{\dot K} C)$.
Combining these last two observations, we see that $\uu_n \wedge (D_n\rhd A) \to \forallcut J\, \Diamond \, (A \wedge \Box^{\dot J} C)$ so that $\forallcut J\ \uu_n \wedge (D_n\rhd A) \rhd A \wedge \Box^{\dot J} C$. Combining this with \eqref{equation:SoundnessProof:Pudlak} yields $\uu_n \wedge (D_n\rhd A) \rhd B\wedge \Box C$ as was to be shown.
\end{proof}

\section{On the core interpretability logic \ilal}
Apart from the principles mentioned earlier in this paper the literature has considered various other principles too. Some of those are
\begin{enumerate}
%


\item[${\sf W}$:]
$A \rhd B \rightarrow A \rhd B \wedge \Box \neg A$

\item[${\sf W^*}$:]
$A \rhd B \rightarrow B \wedge \Box C \rhd B \wedge \Box C \wedge \Box \neg A$

\item[${\sf P_0}$:]
$A \rhd \Diamond B \rightarrow \Box (A \rhd B)$

\item[${\sf R}$:]
$A\rhd B \rightarrow \neg (A \rhd \neg C) \rhd B \wedge \Box C$
\end{enumerate}

In \cite{Visser:1988:preliminaryNotesOnInterpretabilityLogic}, \intl{All} was conjectured to be \ilw. In 
\cite{Visser:1991:FormalizationOfInterpretability} this conjecture was falsified and strengthened to 
a new conjecture, namely that \ilwstar, which is a 
proper extension of \ilw, is \intl{All}. In \cite{Joosten:1998:MasterThesis} it was proven that the logic \extil{W^*P_0} is a proper extension of
\ilwstar, and that \extil{W^*P_0} is a subsystem of \intl{All} (we write  \extil{W^*P_0} instead of \extil{\{ W^*,P_0\}}). This 
falsified the conjecture from \cite{Visser:1991:FormalizationOfInterpretability}.
In \cite{Joosten:1998:MasterThesis} it is also conjectured that \extil{W^*P_0} is not 
the same as \intl{All}.

In \cite{JoostenVisser:2000:IntLogicAll} it is conjectured that \extil{W^*P_0}=\intl{All} and this conjecture was refuted in \cite{GorisJoosten:2011:ANewPrinciple} by proving that the logic \extil{RW} is a subsystem of \intl{All} and a proper extension of \extil{W^*P_0}.\\
\medskip

It is easy to see that $A \rhd \Diamond B \to \Box (A\rhd \Diamond B) \ \in \ \ilp\cap \ilm$. In \cite{Visser:1997:OverviewIL} it was shown however that $A \rhd \Diamond B \to \Box (A\rhd \Diamond B) \ \notin \ \ilal$ thereby lowering the upper bound $\ilal \subseteq \ilp\cap \ilm$. Since $A \rhd \Diamond B \to \Box (A\rhd \Diamond B)$ is reminiscent of the modally incomplete principle $\principle {P_0}$, we remark here that the principle 
\[
A\rhd B \to \neg (A\rhd \Diamond C) \rhd B \wedge \Box \neg C
\]
implies $A \rhd \Diamond B \to \Box (A\rhd \Diamond B)$ so that it cannot be in \ilal either.

The current paper raises the previously known lower bound of \ilal. However, it seems unlikely that this will be the end of the story and the two series presented here seem amenable for interactions. Just by mere inspection of the frame conditions we observe that 
\begin{align*}
\mathcal F_n &= \forall w, x, y, z\ (\mathcal B_0(w,x,y,z)\Rightarrow \mathcal G_n(x,y,z)),\\
\mathcal{F}^n &= \forall w, x, y, z\ (\mathcal B_n(w,x,y,z)\Rightarrow \mathcal G_0(x,y,z)).
\end{align*}
suggesting possible interactions. For example, a combination of ${\sf R^1}$ and ${\sf R_1}$ could yield
\[
A\rhd B\rightarrow (C\rhd  A)\wedge\Diamond\neg(C\rhd\neg D)\wedge(E\rhd\Diamond F)
	\rhd
		B\wedge\Box D\wedge(E\rhd F).
\]
We note that the two series presented in this paper only spoke of $S$ relations that were imposed by the frame conditions. This suggests that a new conjecture can be formulated.

Let $\mathfrak F$ be a class of \il-frames. By $\il[\mathfrak F]$ we shall denote the interpretability logic corresponding to this class. That is, 
\[
\il[\mathfrak F]\ := \ \{ A \mid \forall F \in \mathfrak F\, \forall^{\sf valuation} {V} \ \la F, V \ra \models A\}.
\] 
We now define the class of frames $\mathfrak{All}$ to be the set of frames where any $S$ relation that is implied both by the \ilm and the \ilp frame condition is present. To make this more precise, let $P$ denote the first-order frame condition of \principle P and let $M$ denote the first-order frame condition of \principle M. Let $F(x,y,z)$ denote any sentence --first or higher order-- in the language $\{ R, \{S_x\}_{x\in W} \}$. We write $\ilp \models F(x,y,z) \to yS_xz$ to denote that for any Veltman frame $\mathcal F$ for which $\mathcal F \models P$ we also have $\mathcal F \models F(x,y,z) \to yS_xz$. Likewise, we shall speak of $\ilm \models F(x,y,z) \to yS_xz$. With this notation, we define
\[
\begin{array}{lll}
\mathfrak{All} &\ := & \{ \mathcal F \mid \Big ( \ilp \models (F(x,y,z) \to yS_xz) \ \& \ \ilm \models (F(x,y,z) \to yS_xz) \Rightarrow \\
 & & \ \ \ \ \ \ \ \ \mathcal F \models (F(x,y,z) \to yS_xz) \Big)\}.
\end{array}
\]
The second author poses the new conjecture
\begin{conjecture}
$\ilal = \il[\mathfrak{All}]$.
\end{conjecture}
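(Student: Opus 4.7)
The conjecture splits into two inclusions. I would attack $\il[\mathfrak{All}] \subseteq \ilal$ first, where the strategy is a uniform meta-soundness lemma exploiting the very design of $\mathfrak{All}$: the only $S$-edges it imposes are those forced \emph{jointly} by the \ilm and \ilp frame conditions, and each such forcing should be unpackable into a concrete arithmetical witness. The \ilm side is handled by Pudl\'ak's lemma and its generalization (Lemma \ref{theorem:KeepWitnessSmall}); the \ilp side by provable $\Sigma_1$-completeness together with the outside-big inside-small principle (Lemma \ref{theorem:K4WithCuts}). The concrete plan is to formulate a ``joint small-witness'' principle stating that whenever a first-order condition of shape $F(x,y,z)\to yS_xz$ lies in the intersection of \ilp- and \ilm-validities, the associated modal implication is realized by marrying a Pudl\'ak-style cut-shortening argument to an $\exists \Sigma_1^b$-coding of the relevant interpretability data. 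The two hierarchies of the present paper should then reappear as special cases; in particular the $\rr_k$-soundness argument provides a template one would try to adapt to arbitrary joint frame conditions.

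\textbf{The converse.} For $\ilal \subseteq \il[\mathfrak{All}]$ I would argue by contraposition via a Solovay-style construction. Given $A \notin \il[\mathfrak{All}]$, pick a countermodel $\langle F, V\rangle$ with $F\in \mathfrak{All}$, assign to each world $w$ a self-referential $\exists \Sigma_1^b$ sentence $\lambda_w$ tracking the position of a Solovay walk, and define the realization by $p^{*} := \bigvee_{w \Vdash p} \lambda_w$. The theory $T$ has to be chosen so that its interpretability-theoretic closure matches $\mathfrak{All}$ exactly: no full induction (which would validate \principle{M} beyond what $\mathfrak{All}$ already encodes), no wholesale $\Sigma_1$-completeness for $\rhd$-atoms (which would validate \principle{P} too much), yet enough formalized-syntax support to run the Henkin construction on a cut as in the Formalized Henkin Lemma. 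A natural preparatory step is to prove modal completeness of $\il[\mathfrak{All}]$ through a saturation procedure that iteratively closes candidate frames under every $S$-enrichment jointly sanctioned by \ilm and \ilp, thus extracting, even if only in principle, an axiomatization against which the Solovay construction can be verified.

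\textbf{Main obstacle.} The delicate step is the Solovay construction for the completeness direction. Existing Solovay-style arguments realize either arbitrary \ilm-frames (exploiting full induction) or arbitrary \ilp-frames (exploiting $\Sigma_1$-completeness of interpretability atoms in finitely axiomatised superexponentially strong theories), but here one needs a \emph{single} theory $T$ that faithfully realises frames in $\mathfrak{All}$ without inheriting any further frame condition. Pinning such a $T$ down, and proving that its interpretability logic is exactly $\il[\mathfrak{All}]$, is the main hurdle. A secondary worry, reminiscent of the incompleteness of $\principle{P_0}$ recalled in the preceding discussion, is that $\il[\mathfrak{All}]$ may itself fail to be Kripke-complete, in which case the conjecture would need to be refined --- perhaps by enlarging $\mathfrak{All}$ to include generalised Veltman frames --- before its two-sided form could be proved at all.
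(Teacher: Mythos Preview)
The statement you are addressing is not a theorem but a \emph{conjecture}: the paper explicitly labels it as such and offers no proof. It is posed by the second author as an open problem immediately after the discussion of the two series and the observation that both hierarchies impose only $S$-relations that are forced jointly by the \ilm and \ilp frame conditions. Consequently there is no ``paper's own proof'' to compare your attempt against.

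What you have written is not a proof but a research outline, and it should be read as such. Your soundness direction ($\il[\mathfrak{All}]\subseteq\ilal$) presupposes a uniform ``joint small-witness'' metalemma covering \emph{every} frame condition of the form $F(x,y,z)\to yS_xz$ valid on both \ilm- and \ilp-frames; nothing in the paper suggests such a metalemma is within reach, and the ad hoc character of the soundness proofs for the $\rr_k$ and $\principle{R}^n$ series (each requiring its own $\exists\Sigma_1^b$ encoding $\mathcal H_k$ or its own cut-chasing via Lemma~\ref{theorem:KeepWitnessSmall}) is evidence against a uniform treatment. Your completeness direction correctly identifies the central obstacle --- finding a single theory $T$ whose interpretability logic is exactly $\il[\mathfrak{All}]$ --- but this is precisely what makes the problem open: no such $T$ is known, and the paper does not hint at a candidate. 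Your caveat about possible Kripke-incompleteness of $\il[\mathfrak{All}]$ is apt; the paper itself notes that the $\mathfrak{All}$ formulation deliberately excludes principles like $A\rhd B\to(\Diamond A\wedge\Box\Box C\rhd B\wedge\Box C)$, so even the \emph{formulation} of the conjecture may need refinement.

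In short: there is no gap to name because there is no proof, only a conjecture, and your proposal is a plausible but speculative plan of attack rather than an argument to be checked.
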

It is easy to formulate the conjecture where the antecedent $F(x,y,z)$ is replaced by a set of sentences rather than a single sentence yet it seems hard to imagine that this is needed. Note that the conjecture only speaks of principles related to imposed $S$ relations. For example, this will leave out a principle like $A\rhd B \to (\Diamond A \wedge \Box \Box C \rhd B\wedge \Box C)$ as formulated in \cite{JoostenVisser:2000:IntLogicAll}.

\bibliographystyle{plain}
\bibliography{References}

\end{document}